\begin{document}

\newcommand{\A}{\mbox{${{{\cal A}}}$}}


\author{Attila Losonczi}
\title{Means of infinite sets III}

\date{25 Sept 2018}

\newtheorem{thm}{\qquad Theorem}[section]
\newtheorem{prp}[thm]{\qquad Proposition}
\newtheorem{lem}[thm]{\qquad Lemma}
\newtheorem{cor}[thm]{\qquad Corollary}
\newtheorem{rem}[thm]{\qquad Remark}
\newtheorem{ex}[thm]{\qquad Example}
\newtheorem{prb}[thm]{\qquad Problem}
\newtheorem{df}[thm]{\qquad Definition}

\maketitle

\begin{abstract}

\noindent

We study various topics, e.g. accumulation points by a mean, two types of derivative by a mean, two new continuity and a boundedness concepts, we construct new means from old ones, finally we investigate the limit of means.

\noindent
\footnotetext{\noindent
AMS (2010) Subject Classifications: 26E60, 28A10 \\

Key Words and Phrases: generalized means of sets, Lebesgue and Hausdorff measure}

\end{abstract}

\section{Introduction}
In this paper we are going to continue the investigations started in \cite{lamis} and \cite{lamisii}. For basic definitions, examples, ideas, intentions please consult \cite{lamis} and  \cite{lamisii}. More on this area can be found in \cite{lamubs}, \cite{lambm} and \cite{lamsbm}.

In this paper we study various topics. First we generalize the liminf, limsup of sets in a way that we identify the maximum irrelevant part of the set by the mean. Using that we extend the notion of internality. Then we define accumulation point by a mean such that each of its neighborhood contains an essential part of the set i.e. it contains a subset which leaving out spoils the mean.
We investigate its properties, relations to previous notions and we can also define closed sets by a mean. For $Avg^1$ we show that the closed sets constitute a topology but it fails to be valid for ${\cal{M}}^{acc}$.

Our next aim is to find a reasonably good interpretation of the derivative by a mean. We found two different ways. The first one measures that how symmetric the set is around $x$ in the sense of the mean. The second is about the lower and upper derivative for compacts sets using the Hausdorff metric. We study $Avg^1$ in details and find necessary and sufficient condition when they take the extremum. 

Then we investigate new concepts on continuity and boundedness. The former is related to Cantor-continuity while the latter is a kind of triangle-inequality for sets when we are dealing with generalized means. We show that means by measures satisfy both conditions.

Then we construct new means from old ones that resembles to the way how one defines a quasi-arithmetic mean from the arithmetic mean. We present attributes that are inherited by this method.

Finally we study the limit of means. In \cite{lamis} we defined many generalized means that were constructed via (pointwise) limit of means. First we analyze the underlying means and e.g. we show that $H\mapsto Avg(S(H,\delta))$ is continuous regarding the Hausdorff metric on compact sets. Then we investigate the general limits and define pointwise and uniform convergence. We present many properties that are inherited by pointwise limits. We also prove that $LAvg$ was not derived by uniform convergence.

At the end of the paper we present a few open problems. 

\subsection{Basic notations}
For easier readability we copy the basic notations from \cite{lamis}.

Throughout this paper function $\A()$ will denote the arithmetic mean of any number of variables. Moreover if $(a_n)$ is an infinite sequence and $\lim_{n\to\infty}\A(a_1,\dots,a_n)$ exists then $\A((a_n))$ will denote its limit.

\begin{df}\label{davg}Let $\mu^s$ denote the s-dimensional Hausdorff measure ($0\leq s\leq 1$). If $0<\mu^s(H)<+\infty$ (i.e. $H$ is an $s$-set) and $H$ is $\mu^s$ measurable then $$Avg(H)=\frac{\int\limits_H x\ d\mu^s}{\mu^s(H)}.$$
If $0\leq s\leq 1$ then set $Avg^s=Avg|_{\{\text{measurable s-sets}\}}$. E.g. $Avg^1$ is $Avg$ on all Lebesgue measurable sets with positive measure.
\end{df}

\smallskip

For $K\subset\mathbb{R},\ y\in\mathbb{R}$ let us use the notation $$K^{y-}=K\cap(-\infty,y],K^{y+}=K\cap[y,+\infty).$$

\smallskip

If $H\subset\mathbb{R},\ \epsilon>0$ we use the notation $S(H,\epsilon)=\bigcup_{x\in H}S(x,\epsilon)$ where $S(x,\epsilon)=\{y:|x-y|<\epsilon\}$. 

\smallskip

Let ${\cal{I}}$ be an ideal of subsets of $\mathbb{R}$ and $H\subset\mathbb{R},\ H\notin{\cal{I}}$ be bounded. Set $\varlimsup^{\cal{I}} H=\inf\{x: H^{x+}\in{\cal{I}}\}$. Similarly $\varliminf^{\cal{I}} H=\sup\{x:H^{x-}\in{\cal{I}}\}$.

\medskip

Let $T_s$ denote the reflection to point $s\in\mathbb{R}$ that is $T_s(x)=2s-x\ (x\in\mathbb{R})$.

If $H\subset\mathbb{R},x\in\mathbb{R}$ then set $H+x=\{h+x:h\in H\}$. Similarly $\alpha H=\{\alpha h:h\in H\}\ (\alpha\in\mathbb{R})$.

We use the convention that these operations $+,\cdot$ have to be applied prior to the set theoretical operations, e.g. $H\cup K\cup L+x=H\cup K\cup (L+x)$.

\smallskip

$int(H),cl(H),mar(H), H'$ will denote the interior, closure, boundary and accumulation points of $H\subset\mathbb{R}$ respectively. Let $\varliminf H=\inf H',\ \varlimsup H=\sup H'$ for infinite bounded $H$.

\smallskip

Usually ${\cal{K}},{\cal{M}}$ will denote means, $Dom({\cal{K}})$ denotes the domain of ${\cal{K}}$. When we simply refer to ${\cal{K}}(H)$ then we automatically mean that $H\in Dom\ {\cal{K}}$ without mentioning that before.

\section{New concepts}

\subsection{Some new properties}

\begin{df}${\cal{K}}$ is \textbf{equi-monotone} if $H_1\cap H_2=\emptyset,{\cal{K}}(H_1\cup H_2)={\cal{K}}(H_1)$ implies that ${\cal{K}}(H_1)={\cal{K}}(H_2)$.
\end{df}

\begin{prp}$Avg$ is not equi-monotone, however $Avg^s$ is equi-monotone ($0\leq s\leq 1$).
\end{prp}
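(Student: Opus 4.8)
The plan is to dispatch the two claims separately: the equi-monotonicity of $Avg^s$ is a one-line convex-combination computation, while the failure for $Avg$ is witnessed by a pair of disjoint sets of \emph{different} Hausdorff dimension, exploiting the fact that $Avg$ silently chooses the dimension $s$ from the set it is applied to.

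For $Avg^s$ (fixed $0\le s\le 1$): I would take disjoint measurable $s$-sets $H_1,H_2$ with $Avg^s(H_1\cup H_2)=Avg^s(H_1)$, write $m_i=\mu^s(H_i)\in(0,+\infty)$ and $c_i=Avg^s(H_i)$, so that $\int_{H_i}x\,d\mu^s=m_ic_i$. Since $H_1,H_2$ are disjoint and $\mu^s$-measurable, $\mu^s(H_1\cup H_2)=m_1+m_2\in(0,+\infty)$, hence $H_1\cup H_2$ is again a measurable $s$-set and
$$Avg^s(H_1\cup H_2)=\frac{\int_{H_1}x\,d\mu^s+\int_{H_2}x\,d\mu^s}{m_1+m_2}=\frac{m_1c_1+m_2c_2}{m_1+m_2}.$$
Equating this with $c_1$ and clearing denominators yields $m_2c_2=m_2c_1$, so $c_2=c_1$ because $m_2>0$; that is exactly $Avg^s(H_1)=Avg^s(H_2)$.

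For $Avg$ I would exhibit an explicit counterexample mixing dimensions. Take $H_1=[0,1]$, a $1$-set with $Avg(H_1)=\tfrac12$, and $H_2=\{2\}$, which (being finite) is a $0$-set with $\mu^0(H_2)=1$ and $Avg(H_2)=2$; these are disjoint. The union $H_1\cup H_2$ is Lebesgue measurable with $0<\mu^1(H_1\cup H_2)=1<+\infty$, so it is a $1$-set, and since $H_2$ is $\mu^1$-null we get $\int_{H_1\cup H_2}x\,d\mu^1=\int_{H_1}x\,d\mu^1$, hence $Avg(H_1\cup H_2)=\tfrac12=Avg(H_1)$; yet $Avg(H_1)=\tfrac12\ne 2=Avg(H_2)$, so the implication defining equi-monotonicity fails. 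The computations are routine; the only point that requires a moment's care is verifying that $H_1\cup H_2$ genuinely has dimension $1$ (so that $Avg$ restricted to it is $Avg^1$ and the finite perturbation $H_2$ does not affect the value), and this is immediate. There is no real obstacle: the example is engineered precisely so that $Avg$ jumps from the counting measure on $H_2$ to Lebesgue measure on the union, which is the behavior that $Avg^s$ cannot exhibit.
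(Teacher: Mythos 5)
Your proposal is correct and follows essentially the same route as the paper: the $Avg^s$ part is the identical convex-combination rearrangement, and the failure for $Avg$ is shown by the same mechanism of pairing disjoint sets of different Hausdorff dimension so that the lower-dimensional one is null for the union. The only (cosmetic, slightly more explicit) difference is that you use a $1$-set and a $0$-set ($[0,1]$ and $\{2\}$) where the paper invokes a $0.7$-set and a $0.5$-set with different averages.
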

\begin{proof}Let $H_1,H_2$ be an $0.7$ and $0.5$-sets respectively that are disjoint and have different $Avg$. They clearly show that $Avg$ is not equi-monotone.

To prove that $Avg^s$ is equi-monotone let $H_1,H_2$ be disjoint $s$-sets. Then rearranging the equation
\[Avg^s(H_1\cup H_2)=\frac{\mu^s(H_1)Avg^s(H_1)+\mu^s(H_2)Avg^s(H_2)}{\mu^s(H_1)+\mu^s(H_2)}=Avg^s(H_1)\]
gives the statement.
\end{proof}

This gives that disjoint-monotonicity does not imply equi-monotonicity since $Avg$ is dijoint-monotone.

\begin{prp}${\cal{M}}^{acc}$ is not equi-monotone. If ${\cal{M}}^{acc}$ is restricted to sets that have the same level, then equi-monotonicity holds for those sets.
\end{prp}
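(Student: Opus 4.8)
The plan is to re-run the proof of the previous proposition, with the \emph{level} of a set playing the role that the dimension $s$ played there, and the weight that drives ${\cal{M}}^{acc}$ on a fixed level playing the role of $\mu^s$. The one structural fact I would isolate first is that the level behaves well under disjoint unions: since a point accumulates to $H_1\cup H_2$ exactly when it accumulates to $H_1$ or to $H_2$, the iterated accumulation sets satisfy $(H_1\cup H_2)^{(n)}=H_1^{(n)}\cup H_2^{(n)}$ for all $n$, so the level of $H_1\cup H_2$ is the larger of the two levels (after separating $H_1$ and $H_2$ by a gap, if necessary, so that no new accumulation is created).

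For the first assertion I would imitate the $Avg$-counterexample. Choose disjoint bounded $H_1,H_2$ with $H_1$ of strictly larger level than $H_2$ and with ${\cal{M}}^{acc}(H_1)\neq{\cal{M}}^{acc}(H_2)$; producing such a pair is routine, as one only has to exhibit two sets of different levels and translate them apart. By the remark above $H_1\cup H_2$ has the (larger) level of $H_1$, and, exactly as a $0.5$-set is $\mu^{0.7}$-null and hence invisible to $Avg$, the lower-level summand $H_2$ contributes nothing to ${\cal{M}}^{acc}$ at the level of the union. Hence ${\cal{M}}^{acc}(H_1\cup H_2)={\cal{M}}^{acc}(H_1)$ although ${\cal{M}}^{acc}(H_1)\neq{\cal{M}}^{acc}(H_2)$, so ${\cal{M}}^{acc}$ is not equi-monotone.

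For the restricted statement, let $H_1,H_2$ be disjoint of a common level $\ell$ with ${\cal{M}}^{acc}(H_1\cup H_2)={\cal{M}}^{acc}(H_1)$; by the first paragraph $H_1\cup H_2$ also has level $\ell$, so all three values are defined. On a fixed level ${\cal{M}}^{acc}$ is a weighted arithmetic mean of its pieces: there are finite strictly positive weights $w(H_1),w(H_2)$ (the level analogue of the $\mu^s$-masses in the $Avg^s$ computation) with
\[{\cal{M}}^{acc}(H_1\cup H_2)=\frac{w(H_1){\cal{M}}^{acc}(H_1)+w(H_2){\cal{M}}^{acc}(H_2)}{w(H_1)+w(H_2)}.\]
Substituting the hypothesis and rearranging, just as in the last line of the proof that $Avg^s$ is equi-monotone, yields ${\cal{M}}^{acc}(H_1)={\cal{M}}^{acc}(H_2)$. (Should ${\cal{M}}^{acc}$ restricted to level $\ell$ turn out to be literally some $Avg^s$ evaluated on the relevant derivative, one may instead quote the previous proposition directly, after checking that the derivatives of $H_1$ and $H_2$ are disjoint $s$-sets.)

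The step I expect to be the main obstacle is the level bookkeeping: one must verify that the union does not raise the level, that the lower-level summand is genuinely annihilated in the first example, and that the weights $w(H_i)$ in the same-level decomposition are finite and positive so that the rearrangement is legitimate. Once that decomposition identity is secured, both halves of the argument are purely formal, mirroring the $Avg$/$Avg^s$ proof.
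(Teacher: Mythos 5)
Your proposal is essentially the paper's own argument: the paper's counterexample is just the concrete instance $H_1=\{\tfrac1n:n\in\mathbb{N}\}$ (level $1$), $H_2=\{2\}$ (level $0$) of your different-level construction, and for the equal-level case the paper likewise writes ${\cal{M}}^{acc}(H_i)=\A(H_i^{(n)})$, uses $(H_1\cup H_2)^{(n)}=H_1^{(n)}\cup H_2^{(n)}$ (derived sets commute with finite unions, so your ``separating by a gap'' precaution and the level bookkeeping you worry about are automatic), and then quotes the $Avg^s$ equi-monotonicity with $s=0$, i.e.\ your weights are exactly $w(H_i)=|H_i^{(n)}|$. The one point you defer --- that the weighted decomposition needs $H_1^{(n)}\cap H_2^{(n)}=\emptyset$, which does not follow from $H_1\cap H_2=\emptyset$ --- is left implicit in the paper as well, so your proposal matches the paper's proof, including that tacit assumption.
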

\begin{proof}Let $H_1=\{\frac{1}{n}:n\in\mathbb{N}\},\ H_2=\{2\}$. These show that ${\cal{M}}^{acc}$ is not equi-monotone.

Let $lev(H_1)=lev(H_2)=n$. Then ${\cal{M}}^{acc}(H_i)=\A(H_i^{(n)})\ (i=1,2)$ and $(H_1\cup H_2)^{(n)}=H_1^{(n)}\cup H_2^{(n)}$. Hence the previous proposition gives the assertion because $\A=Avg^0$.
\end{proof}

\begin{prp}If ${\cal{K}}$ is monotone, $H\in Dom({\cal{K}})$, $x,y,z\in\mathbb{R},\ x<y<z,\ {\cal{K}}(H^{x+})={\cal{K}}(H^{z+})=k$ then ${\cal{K}}(H^{y+})=k$. A similar statement is true for $-$ instead of $+$.
\end{prp}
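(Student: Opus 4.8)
The plan is to use monotonicity together with the set inclusion $H^{z+}\subseteq H^{y+}\subseteq H^{x+}$, which follows immediately from $x<y<z$ since $[z,+\infty)\subseteq[y,+\infty)\subseteq[x,+\infty)$. First I would recall what monotonicity of a mean ${\cal K}$ should mean here: if $A\subseteq B$ (both in the domain), then ${\cal K}(A)\le{\cal K}(B)$. Applying this twice to the chain of inclusions gives ${\cal K}(H^{z+})\le{\cal K}(H^{y+})\le{\cal K}(H^{x+})$, i.e. $k\le{\cal K}(H^{y+})\le k$, hence ${\cal K}(H^{y+})=k$. The symmetric statement for $H^{y-}$ follows verbatim from the reversed inclusions $H^{x-}\subseteq H^{y-}\subseteq H^{z-}$.

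The one genuine gap to fill is that the argument tacitly assumes $H^{y+}\in Dom({\cal K})$, so that ${\cal K}(H^{y+})$ is defined and monotonicity can be invoked; the statement as phrased presumes this (consistent with the convention stated at the end of the Basic notations subsection that writing ${\cal K}(H^{y+})$ carries the assumption $H^{y+}\in Dom\ {\cal K}$). So I would simply note that $H^{y+}\in Dom({\cal K})$ is part of the hypothesis implicit in writing ${\cal K}(H^{y+})$, and then the two applications of monotonicity close the proof.

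I expect no real obstacle here: the only thing to be careful about is getting the direction of the inclusions right (larger cut point gives a smaller right-tail) and making sure the monotonicity inequalities are chained in the correct order so that the outer two terms being equal to $k$ forces the middle term to equal $k$ as well. Thus the proof is a two-line squeeze argument, and I would present it as such rather than belaboring it.

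\begin{proof}
Since $x<y<z$, we have $[z,+\infty)\subseteq[y,+\infty)\subseteq[x,+\infty)$, hence $H^{z+}\subseteq H^{y+}\subseteq H^{x+}$. As ${\cal K}$ is monotone, applying it along this chain yields
\[{\cal K}(H^{z+})\leq{\cal K}(H^{y+})\leq{\cal K}(H^{x+}),\]
that is $k\leq{\cal K}(H^{y+})\leq k$, so ${\cal K}(H^{y+})=k$. For the statement with $-$, note that $x<y<z$ gives $H^{x-}\subseteq H^{y-}\subseteq H^{z-}$, so monotonicity yields ${\cal K}(H^{x-})\leq{\cal K}(H^{y-})\leq{\cal K}(H^{z-})$, and the same squeeze argument applies.
\end{proof}
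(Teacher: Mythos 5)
Your proof rests on a reading of ``monotone'' that is not the one used in this series of papers, and which is in fact false for the means considered here. You take monotonicity to mean $A\subseteq B\Rightarrow{\cal K}(A)\le{\cal K}(B)$; but no reasonable mean is inclusion-monotone (e.g.\ $Avg^1([1,2])=1.5>1=Avg^1([0,2])$ although $[1,2]\subset[0,2]$), and in these papers ``monotone'' means: if $\sup H_1\le\inf H_2$ then ${\cal K}(H_1)\le{\cal K}(H_1\cup H_2)\le{\cal K}(H_2)$. Your chain ${\cal K}(H^{z+})\le{\cal K}(H^{y+})\le{\cal K}(H^{x+})$ is therefore unjustified, and indeed its direction is backwards: cutting $H$ further to the right keeps only larger points, so for a mean like $Avg^1$ one has ${\cal K}(H^{x+})\le{\cal K}(H^{y+})\le{\cal K}(H^{z+})$, not the reverse. (The squeeze would of course work with either direction once the chain is established, but your justification for the chain is the wrong notion of monotonicity, so the proof does not stand.)

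The missing idea is a decomposition that puts the correct notion of monotonicity to work. Write $H^{y+}=(H\cap[y,z))\cup H^{z+}$; since $\sup\bigl(H\cap[y,z)\bigr)\le z\le\inf H^{z+}$, monotonicity gives ${\cal K}(H^{y+})\le{\cal K}(H^{z+})=k$. Similarly $H^{x+}=(H\cap[x,y))\cup H^{y+}$ with $\sup\bigl(H\cap[x,y)\bigr)\le\inf H^{y+}$ gives $k={\cal K}(H^{x+})\le{\cal K}(H^{y+})$, and the two inequalities squeeze ${\cal K}(H^{y+})=k$. This is exactly the paper's argument; your observation about the implicit assumption $H^{y+}\in Dom({\cal K})$ is fine, but it does not repair the core step.
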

\begin{proof}Clearly $\sup H\cap[y,z)\leq z\leq\inf H^{z+}$ hence ${\cal{K}}(H^{y+})={\cal{K}}\big((H\cap[y,z))\cup H^{z+}\big)\leq{\cal{K}}(H^{z+})$. Similarly $\sup H\cap[x,y)\leq\inf H^{y+}$ hence ${\cal{K}}(H^{x+})={\cal{K}}((H\cap[x,y))\cup H^{y+})\leq{\cal{K}}(H^{y+})$.
\end{proof}

\begin{df}Let ${\cal{K}}$ be a monotone mean, $H\in Dom({\cal{K}})$. Let

$\varliminf_{\cal{K}}H=\sup\{x:{\cal{K}}(H)={\cal{K}}(H^{x+})\}$, 
$\varlimsup_{\cal{K}}H=\inf\{x:{\cal{K}}(H)={\cal{K}}(H^{x-})\}$ be the liminf and limsup of $H$ with respect to the mean ${\cal{K}}$.
\end{df}

Actually monotonicity is not necessary i.e. we could have formulated the definition without that.

\begin{prp}If ${\cal{K}}$ is a mean and $H\in Dom({\cal{K}})$ then $\varliminf_{\cal{K}}H\leq{\cal{K}}(H)\leq\varlimsup_{\cal{K}}H$.
\end{prp}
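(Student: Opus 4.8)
The plan is to unwind the definitions of $\varliminf_{\cal{K}}H$ and $\varlimsup_{\cal{K}}H$ and show that ${\cal{K}}(H)$ sits between the two suprema/infima. Recall that ${\cal{K}}$ being a mean means (in the sense of \cite{lamis}) that ${\cal{K}}(K)$ lies between $\inf K$ and $\sup K$ for any $K\in Dom({\cal{K}})$; this internality is the only property I expect to need. I would prove the two inequalities separately, and by symmetry (replacing $H$ by its reflection, or just repeating the argument with $-$ in place of $+$) it suffices to establish $\varliminf_{\cal{K}}H\leq{\cal{K}}(H)$.

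First I would fix any $x$ with ${\cal{K}}(H)={\cal{K}}(H^{x+})$, i.e. any $x$ in the set over which we take the supremum defining $\varliminf_{\cal{K}}H$. Since $H^{x+}=H\cap[x,+\infty)$, internality of the mean gives ${\cal{K}}(H^{x+})\geq\inf H^{x+}\geq x$. Combining, ${\cal{K}}(H)={\cal{K}}(H^{x+})\geq x$. As this holds for every such $x$, taking the supremum over all of them yields ${\cal{K}}(H)\geq\varliminf_{\cal{K}}H$. The symmetric computation: if ${\cal{K}}(H)={\cal{K}}(H^{x-})$ then internality gives ${\cal{K}}(H^{x-})\leq\sup H^{x-}\leq x$, so ${\cal{K}}(H)\leq x$, and taking the infimum over all such $x$ gives ${\cal{K}}(H)\leq\varlimsup_{\cal{K}}H$.

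One small point worth addressing is the edge case where the set $\{x:{\cal{K}}(H)={\cal{K}}(H^{x+})\}$ is empty, in which case its supremum is $-\infty$ by convention and the inequality is vacuous; likewise if the set defining $\varlimsup_{\cal{K}}H$ is empty its infimum is $+\infty$. So there is nothing to prove in those degenerate situations, and in the non-degenerate ones the above bounds apply directly. I do not anticipate a genuine obstacle here — the proposition is essentially an immediate consequence of internality once the definitions are spelled out; the only thing requiring a moment's care is making sure $H^{x+}$ is nonempty (hence $\inf H^{x+}\ge x$ makes sense) whenever ${\cal{K}}(H^{x+})$ is defined, which is automatic since $H^{x+}\in Dom({\cal{K}})$ forces it to be nonempty.
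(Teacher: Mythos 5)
Your proof is correct and takes essentially the same route as the paper's: both rest on the internality observation that ${\cal{K}}(H^{x+})\geq\inf H^{x+}\geq x$ (dually ${\cal{K}}(H^{x-})\leq x$) for every $x$ in the set defining $\varliminf_{\cal{K}}H$ (resp.\ $\varlimsup_{\cal{K}}H$). The paper merely casts it as a contradiction argument, choosing one $x\in({\cal{K}}(H),\varliminf_{\cal{K}}H)$ with ${\cal{K}}(H)={\cal{K}}(H^{x+})$, whereas you bound every such $x$ directly and take the supremum; this is the same idea in direct form.
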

\begin{proof}We prove the first inequality (the other is similar). Suppose indirectly that $\varliminf_{\cal{K}}H>{\cal{K}}(H)$. Then there is an $x\in({\cal{K}}(H),\varliminf_{\cal{K}}H)$ for which ${\cal{K}}(H)={\cal{K}}(H^{x+})$. But ${\cal{K}}(H)<x$ implies that ${\cal{K}}(H)<{\cal{K}}(H^{x+})$ which is a contradiction.
\end{proof}

\begin{df}${\cal{K}}$ is strong internal with respect to itself if $\varliminf_{\cal{K}}H\leq{\cal{K}}(H)\leq\varlimsup_{\cal{K}}H$. 
\ ${\cal{K}}$ is strict strong internal with respect to itself if it is strong internal and $\varliminf_{\cal{K}}H<{\cal{K}}(H)<\varlimsup_{\cal{K}}H$ whenever $\varliminf_{\cal{K}}H\ne\varlimsup_{\cal{K}}H$.
\end{df}

\begin{prp}$\varliminf_{Avg^1}H=\varliminf_{{\cal{N}}_0}H,\ \varlimsup_{Avg^1}H=\varlimsup_{{\cal{N}}_0}H$ where ${\cal{N}}_0=\{$sets with Lebesgue measure 0$\}$ and $\varliminf_{{\cal{N}}_0},\varlimsup_{{\cal{N}}_0}$ denote the liminf, limsup by ideal ${\cal{N}}_0$.
\end{prp}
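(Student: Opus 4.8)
The plan is to show the two defining suprema (and infima) coincide by proving a tight relationship between the conditions ``$H^{x+}$ has Lebesgue measure $0$'' and ``$Avg^1(H)=Avg^1(H^{x+})$''. Recall that $\varliminf_{{\cal{N}}_0}H=\sup\{x:H^{x-}\in{\cal{N}}_0\}$, i.e. the largest point to the right of which essentially all the mass of $H$ sits; and $\varliminf_{Avg^1}H=\sup\{x:Avg^1(H)=Avg^1(H^{x+})\}$. So I want to compare the sets $S_1=\{x:\mu^1(H^{x-})=0\}$ and $S_2=\{x:Avg^1(H^{x+})=Avg^1(H)\}$ and show they have the same supremum; by symmetry (reflecting $H$, which swaps $+$ and $-$ and negates $Avg^1$) the limsup statement follows from the liminf one, so I would only treat $\varliminf$.

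First I would handle the easy inclusion. If $\mu^1(H^{x-})=0$, then $H=H^{x-}\cup H^{x+}$ with $H^{x-}$ a null set, so $\mu^1(H)=\mu^1(H^{x+})$ and $\int_H t\,d\mu^1=\int_{H^{x+}} t\,d\mu^1$ (the integral over the null set contributes nothing, once one checks integrability is unaffected — $H$ is bounded so this is immediate). Hence $Avg^1(H)=Avg^1(H^{x+})$, giving $S_1\subseteq S_2$ and therefore $\sup S_1\le\sup S_2$, i.e. $\varliminf_{{\cal{N}}_0}H\le\varliminf_{Avg^1}H$.

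The reverse inequality $\sup S_2\le\sup S_1$ is the main obstacle, because $S_2$ is genuinely larger than $S_1$: e.g. if $H$ is symmetric about a point then chopping off a small symmetric piece of positive measure on the left can leave $Avg^1$ unchanged while $H^{x-}$ is not null. The key observation to push through is a \emph{monotonicity} fact: for $x<\varliminf_{{\cal{N}}_0}H$ we have $\mu^1(H^{x-})>0$, and I claim that moving $x$ slightly causes $Avg^1(H^{x+})$ to change. More precisely, set $m=\varliminf_{{\cal{N}}_0}H$. For $x<m$, $H\cap(x-\delta,x]$ has positive measure for suitable $\delta$, and all of that mass lies strictly to the left of, say, $H\cap[m,\infty)$ which also has positive measure (since $m\le\varlimsup_{{\cal{N}}_0}H$, using $H\notin{\cal{N}}_0$); splitting $H^{x+}$ into the part below and above an intermediate point and applying the strict form of the averaging identity shows $Avg^1(H^{x+})$ is \emph{strictly increasing} in $x$ on the range $x<m$ wherever $H$ has mass accumulating from the left. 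Consequently at most one value $x\le m$ can satisfy $Avg^1(H^{x+})=Avg^1(H)$, and combined with Proposition about $H^{x+}$ on intervals (the one stating ${\cal K}(H^{x+})={\cal K}(H^{z+})$ forces the intermediate value) this forces $\sup S_2\le m$. I would therefore carry out: (i) the reflection reduction; (ii) the easy inclusion; (iii) the strict-monotonicity computation for $Avg^1(H^{x+})$ as a function of the cut point below $m$, which is where the real work lies; (iv) conclude $\sup S_2=\sup S_1=m$.

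A cleaner alternative for step (iii), which I would try first, is to invoke the preceding Proposition directly: it says that if $Avg^1(H^{x+})=Avg^1(H^{z+})$ for $x<z$ then equality holds throughout $[x,z]$. Taking $z$ just below $m$ and $x$ with $\mu^1(H^{x-})>0$ strictly smaller, if both gave the value $Avg^1(H)$ then the whole interval would, and then letting the cut point tend to $m$ from below and using continuity of $t\mapsto Avg^1(H^{t+})$ (finitely additive mass argument, $H$ bounded) we would get $Avg^1(H^{m+})=Avg^1(H)$ with $\mu^1(H^{m-})=0$ — consistent — but we would also get that on a genuine interval below $m$ the mean is constant while fresh positive mass is being added on the left, contradicting strict internality of $Avg^1$ with respect to itself (the strict averaging identity). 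Either route closes the gap; the monotonicity/strictness of $Avg^1$ on left-truncations is the crux and the rest is bookkeeping with $H$ bounded so that all integrals converge.
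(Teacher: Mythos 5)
Your reflection reduction and your step (ii) (the inclusion $\varliminf_{{\cal N}_0}H\le\varliminf_{Avg^1}H$) are fine and agree with the paper. The genuine gap is in step (iii), where you argue on the wrong side of $m=\varliminf_{{\cal N}_0}H$. Since $\{x:\lambda(H^{x-})=0\}$ is a down-set and $m$ is its supremum, for every $x<m$ we have $\lambda(H^{x-})=0$; your key claim ``for $x<\varliminf_{{\cal N}_0}H$ we have $\mu^1(H^{x-})>0$'' is exactly backwards. Consequently the whole monotonicity discussion below $m$ collapses: for $x<m$ no mass accumulates from the left, the map $x\mapsto Avg^1(H^{x+})$ is constant there and equal to $Avg^1(H)$ (this is precisely your step (ii)), so your conclusion ``at most one value $x\le m$ can satisfy $Avg^1(H^{x+})=Avg^1(H)$'' contradicts your own easy inclusion. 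Moreover, even a correct statement about $x\le m$ would not prove the reverse inequality: to get $\varliminf_{Avg^1}H\le m$ you must exclude points $x>m$ from $\{x:Avg^1(H^{x+})=Avg^1(H)\}$. Your ``cleaner alternative'' has the same defect: a $z$ just below $m$ and an $x<z$ with $\mu^1(H^{x-})>0$ simply do not exist, so no contradiction with strict internality can arise there. As written, neither route establishes $\sup S_2\le\sup S_1$.

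The mechanism you invoke (the averaging identity plus strictness) is the right one, and is exactly what the paper uses, but applied above $m$. If $\varliminf_{Avg^1}H>m$, then by definition of the supremum there is $x\in(m,\varliminf_{Avg^1}H)$ with $Avg^1(H^{x+})=Avg^1(H)$ and $\lambda(H^{x-})>0$; rearranging $Avg^1(H)=\frac{\lambda(H^{x-})Avg^1(H^{x-})+\lambda(H^{x+})Avg^1(H^{x+})}{\lambda(H)}$ forces $Avg^1(H^{x-})=Avg^1(H)$, which is impossible because $Avg^1(H^{x-})<x<Avg^1(H^{x+})$ (both pieces have positive measure and $\{x\}$ is null). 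Flipping your inequalities and rewriting step (iii) along these lines repairs the argument; no continuity of $t\mapsto Avg^1(H^{t+})$ and no appeal to the intermediate-value proposition is needed.
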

\begin{proof}If $\lambda(H^{x-})=0$ then $Avg^1(H^{x+})=Avg^1(H)$ which shows that  $\varliminf_{{\cal{N}}_0}H\leq\varliminf_{Avg^1}H$. Let us assume that there is an $x\in (\varliminf_{{\cal{N}}_0}H,\varliminf_{Avg^1}H)$ such that $Avg^1(H^{x+})=Avg^1(H)$. Then by
$$\frac{\lambda(H^{x-})Avg^1(H^{x-})+\lambda(H^{x+})Avg^1(H^{x+})}{\lambda(H)}=Avg^1(H)$$
rearranging the equation and using that $\lambda(H^{x-})>0$ we get that $Avg^1(H^{x-})=Avg^1(H)$. That gives that $Avg^1(H)=x$ which is a contradiction.

$\varlimsup$ can be handled similarly.
\end{proof}

\begin{cor}\label{cavg1ssii}$Avg^1$ strict strong internal with respect to itself .
\end{cor}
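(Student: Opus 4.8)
The plan is to observe first that strong internality with respect to itself is nothing but the earlier proposition stating $\varliminf_{\cal{K}}H\le{\cal{K}}(H)\le\varlimsup_{\cal{K}}H$ for every mean, so it is automatic; the only thing to prove is the strict inequalities $\varliminf_{Avg^1}H<Avg^1(H)<\varlimsup_{Avg^1}H$ (required when the two sides differ, but in fact I expect to get them unconditionally). By the preceding proposition I may replace the subscripts by the ideal ${\cal{N}}_0$: write $a=\varliminf_{{\cal{N}}_0}H$ and $b=\varlimsup_{{\cal{N}}_0}H$, so $a=\varliminf_{Avg^1}H$ and $b=\varlimsup_{Avg^1}H$, and recall $\lambda(H)>0$.

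The key preliminary step is to show that the supremum and infimum defining $a$ and $b$ are attained at the level of measure, i.e. $\lambda(H^{a-})=0$ and $\lambda(H^{b+})=0$. The set $\{x:\lambda(H^{x-})=0\}$ is a lower set (if $\lambda(H^{x-})=0$ and $x'<x$ then $\lambda(H^{x'-})\le\lambda(H^{x-})=0$), it is nonempty since $H$ is bounded (this is implicit already in the definition of $\varliminf_{{\cal{N}}_0}$), and it is bounded above since otherwise $\lambda(H)=\sup_x\lambda(H^{x-})=0$; hence it is an interval with right endpoint $a$. Since $H\cap(-\infty,a-\tfrac1n]$ increases to $H\cap(-\infty,a)$ and each of these sets has $\lambda$-measure $0$ (being of the form $H^{x-}$ with $x<a$), continuity of $\lambda$ along increasing unions gives $\lambda(H\cap(-\infty,a))=0$, and adjoining the single point $a$ yields $\lambda(H^{a-})=0$. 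The symmetric argument on $\{x:\lambda(H^{x+})=0\}$ gives $\lambda(H^{b+})=0$.

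With this the conclusion is immediate. From $\lambda(H^{a-})=0$ we get that $t>a$ for $\lambda$-almost every $t\in H$, so $\int_H(t-a)\,d\lambda>0$ (using $\lambda(H)>0$), hence
\[
Avg^1(H)=a+\frac{\int_H(t-a)\,d\lambda}{\lambda(H)}>a=\varliminf_{Avg^1}H .
\]
Symmetrically, $\lambda(H^{b+})=0$ gives $Avg^1(H)<b=\varlimsup_{Avg^1}H$. (Incidentally this also shows $a<b$ always, since $a=b$ would force $\lambda(H)\le\lambda(H^{a-})+\lambda(H^{a+})=0$, so the hypothesis $\varliminf_{Avg^1}H\ne\varlimsup_{Avg^1}H$ is automatically met.) Combined with the internality proposition, this proves the corollary. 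The only point needing any care is the attainment claim $\lambda(H^{a-})=\lambda(H^{b+})=0$; once that is in place, strictness drops out of a single line, so I do not anticipate a real obstacle.
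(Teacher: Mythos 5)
Your argument is correct, but it takes a different route from the paper: the paper's entire proof of this corollary is a citation, namely that $\varliminf_{{\cal{N}}_0}H<Avg^1(H)<\varlimsup_{{\cal{N}}_0}H$ was already established in \cite{lambm} (Proposition 2.7), which combines with the preceding proposition identifying $\varliminf_{Avg^1}H=\varliminf_{{\cal{N}}_0}H$ and $\varlimsup_{Avg^1}H=\varlimsup_{{\cal{N}}_0}H$ to give the claim. You use the same identification with the ideal ${\cal{N}}_0$, but instead of quoting the external result you re-derive it: the attainment claim $\lambda(H^{a-})=0$ at $a=\varliminf_{{\cal{N}}_0}H$ (via downward closedness of $\{x:\lambda(H^{x-})=0\}$ and continuity of $\lambda$ along the increasing union $H\cap(-\infty,a-\tfrac1n]\uparrow H\cap(-\infty,a)$) is sound, and the strict inequality $Avg^1(H)>a$ then follows correctly because $t-a>0$ for $\lambda$-a.e.\ $t\in H$ and $\lambda(H)>0$ force $\int_H(t-a)\,d\lambda>0$ (the set where $t-a\geq\tfrac1n$ has positive measure for some $n$); the symmetric argument gives $Avg^1(H)<b$, and your observation that $a<b$ holds automatically (else $\lambda(H)\leq\lambda(H^{a-})+\lambda(H^{a+})=0$) is a nice bonus. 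What your version buys is self-containedness within the present paper, at the cost of essentially reproving the cited proposition of \cite{lambm}; what the paper's version buys is brevity by deferring exactly this measure-theoretic content to the earlier work. The only point worth flagging is that your argument tacitly uses boundedness of $H$ (for nonemptiness of $\{x:\lambda(H^{x-})=0\}$ and integrability of $t$ on $H$), which is consistent with the paper's standing convention that sets considered with $\varliminf^{{\cal{I}}},\varlimsup^{{\cal{I}}}$ are bounded, so this is not a gap.
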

\begin{proof}In \cite{lambm} Proposition 2.7. we actually proved that $\varliminf_{{\cal{N}}_0}H<Avg^1(H)<\varlimsup_{{\cal{N}}_0}H$.
\end{proof}

Normally we cannot expect that ${\cal{K}}([\varliminf_{\cal{K}}(H),\varlimsup_{\cal{K}}(H)]\cap H)={\cal{K}}(H)$ holds. E.g. it can happen that $[\varliminf_{\cal{K}}(H),\varlimsup_{\cal{K}}(H)]\cap H=\emptyset$ as a simple example can show for ${\cal{M}}^{iso}$ (let $H=\{-\frac{1}{n},1+\frac{1}{n}:n\in\mathbb{N}\}$). However under the below conditions it holds.

\begin{prp}\label{pkhcblils}Let ${\cal{K}}$ be slice-continuous. Then ${\cal{K}}(H)={\cal{K}}(H\cap[\varliminf_{\cal{K}} H,\varlimsup_{\cal{K}} H])$.
\end{prp}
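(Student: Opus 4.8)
The plan is to exploit the definition of $\varliminf_{\cal{K}}H$ and $\varlimsup_{\cal{K}}H$ together with slice-continuity to pass to the limit. First I would set $a=\varliminf_{\cal{K}}H$ and $b=\varlimsup_{\cal{K}}H$. By Proposition 2.7 we know $a\leq{\cal{K}}(H)\leq b$, so the interval $[a,b]$ is nonempty and $H\cap[a,b]$ makes sense; one should also check it lies in $Dom({\cal{K}})$, which will follow from slice-continuity being stated for exactly such slices (the domain issue is minor and I would dispatch it quickly). The idea is that cutting $H$ down to $[a,b]$ removes only ``$\cal{K}$-irrelevant'' tails on both sides.

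Next I would handle the two tails one at a time, say the right tail first. By definition of $b=\varlimsup_{\cal{K}}H$ as an infimum, for every $x>b$ we have ${\cal{K}}(H)={\cal{K}}(H^{x-})$ (using Proposition 2.4, the ``$-$'' version, to fill in that the equality holds not just at a sequence of points approaching $b$ but for all $x>b$: if it holds at some $x>b$ arbitrarily close to $b$ and at large $x$ where $H^{x-}=H$, the intermediate-value-type Proposition 2.4 gives it for all $x$ in between). Then take a decreasing sequence $x_n\downarrow b$. We have $H^{x_n-}=H\cap(-\infty,x_n]$, and these sets decrease to $H\cap(-\infty,b]$ in the Hausdorff/Cantor sense that slice-continuity is designed to respect; hence ${\cal{K}}(H\cap(-\infty,b])=\lim_n{\cal{K}}(H^{x_n-})={\cal{K}}(H)$. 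Symmetrically, applying the same argument to the set $H\cap(-\infty,b]$ and using that its $\varliminf_{\cal{K}}$ is still $a$ (chopping an irrelevant right tail does not move the left threshold — this needs a one-line monotonicity/consistency remark), an increasing sequence $y_m\uparrow a$ with $(H\cap(-\infty,b])^{y_m+}$ increasing to $H\cap[a,b]$ gives ${\cal{K}}(H\cap[a,b])={\cal{K}}(H\cap(-\infty,b])={\cal{K}}(H)$.

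The main obstacle I anticipate is making the continuity step rigorous: slice-continuity presumably says that $x\mapsto{\cal{K}}(H^{x+})$ and $x\mapsto{\cal{K}}(H^{x-})$ are continuous (or one-sided continuous) in $x$ on the appropriate range, and one must verify that the value at the endpoint $x=b$ (resp. $x=a$) is actually attained as the limit of the values ${\cal{K}}(H^{x-})$ for $x>b$, i.e. that the slice $H\cap(-\infty,b]$ itself is the relevant limiting slice and lies in the domain. In particular, the subtle point is whether $H$ has mass exactly at the endpoints $a$ or $b$: if, say, $\varliminf_{\cal{K}}H=\varlimsup_{\cal{K}}H$, then $[a,b]$ is a single point and $H\cap[a,b]$ is either empty or a singleton, and one needs the degenerate case to be consistent with ${\cal{K}}(H)=a=b$ — which it is, by Proposition 2.7. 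A secondary technical worry is the direction of the one-sided limit relative to how ``slice-continuous'' is defined in the earlier papers; I would phrase the sequential argument so that it only uses continuity from the side where the threshold is approached by points at which the $\cal{K}$-value is already known to equal ${\cal{K}}(H)$, so that no additional regularity is required.
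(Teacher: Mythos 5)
Your proposal is correct and follows essentially the same route as the paper's proof: beyond each threshold the slice function $x\mapsto{\cal{K}}(H^{x-})$ (resp. $x\mapsto{\cal{K}}(H^{x+})$) is constantly ${\cal{K}}(H)$ (via the intermediate-value proposition for monotone means), and slice-continuity transfers this value to the threshold itself, one tail at a time. The bridging claim you flag --- that trimming one tail does not move the other threshold --- is likewise asserted without argument in the paper, and your suggested one-line monotonicity remark (sandwiching ${\cal{K}}(H\cap[y,b])$ between ${\cal{K}}(H^{b-})$ and ${\cal{K}}(H^{y+})$, both equal to ${\cal{K}}(H)$) does close it.
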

\begin{proof}Let us observe that if $\varliminf_{\cal{K}} H=\varlimsup_{\cal{K}} H$ then the statement obviously holds. Now suppose that $\varliminf_{\cal{K}} H\ne\varlimsup_{\cal{K}} H$.

We know that $g(x)={\cal{K}}(H^{x+})$ is continuous. If $x<\varliminf_{\cal{K}} H$ then $g(x)={\cal{K}}(H)$. Hence $g(\varliminf_{\cal{K}} H)={\cal{K}}(H)$ but $g(\varliminf_{\cal{K}} H)={\cal{K}}(H\cap[\varliminf_{\cal{K}} H,+\infty))$. 

Let $H_1=H\cap[\varliminf_{\cal{K}} H,+\infty)$. Applying similar argument for $\varlimsup_{\cal{K}} H_1=\varlimsup_{\cal{K}} H$ we get that ${\cal{K}}(H)={\cal{K}}(H_1)={\cal{K}}(H_1\cap(-\infty,\varlimsup_{\cal{K}} H_1])={\cal{K}}(H\cap[\varliminf_{\cal{K}} H,\varlimsup_{\cal{K}} H])$.
\end{proof}

\subsection{Accumulation points by a mean}

An accumulation point is a kind of point for which each of its neighborhoods contains an essential part of the set. We transplant this notion for means.

\begin{df}Let ${\cal{K}}$ be a mean, $H\in Dom({\cal{K}})$. Set 
$$H^{'{\cal{K}}}=\{x\in\mathbb{R}:\forall\epsilon>0\ \exists L\subset S(x,\epsilon)\text{ such that }{\cal{K}}(H-L)\ne{\cal{K}}(H)\}.$$
We call $H^{'{\cal{K}}}$ the accumulation points of $H$ by ${\cal{K}}$.
\end{df}

\begin{df}
${\cal{K}}$ is said to be self-accumulated if ${\cal{K}}(H^{'{\cal{K}}})={\cal{K}}(H)$.
\end{df}

\begin{prp}If ${\cal{K}}$ is finite-independent then $H^{'{\cal{K}}}\subset H'$. 
\end{prp}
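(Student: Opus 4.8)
The plan is to prove the contrapositive: if $x \notin H'$ then $x \notin H^{'{\cal{K}}}$. So assume $x$ is not an accumulation point of $H$ in the ordinary sense. Then there is some $\epsilon > 0$ such that $S(x,\epsilon) \cap H$ is finite (it contains at most $x$ itself plus no other points of $H$, since $x$ being a non-accumulation point means some punctured neighborhood misses $H$; shrinking $\epsilon$ we may assume $S(x,\epsilon)\cap H \subseteq \{x\}$, which is finite in either case).

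Now I would unwind the definition of $H^{'{\cal{K}}}$ at this particular $\epsilon$. We must show there is no $L \subseteq S(x,\epsilon)$ with ${\cal{K}}(H - L) \ne {\cal{K}}(H)$. Take any such $L$. The key observation is that removing $L$ from $H$ only affects $H$ on the set $S(x,\epsilon)$, and more precisely $H - L = (H \setminus S(x,\epsilon)) \cup (H \cap S(x,\epsilon) \setminus L)$, where the first piece is unchanged and the second differs from $H \cap S(x,\epsilon)$ by removing finitely many points (since $H \cap S(x,\epsilon)$ is finite). Hence $H - L$ and $H$ differ only by the removal of a finite set, namely $L \cap H \subseteq H \cap S(x,\epsilon)$.

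Then I invoke finite-independence: since ${\cal{K}}$ is finite-independent, adding or removing finitely many points does not change the value of ${\cal{K}}$, so ${\cal{K}}(H - L) = {\cal{K}}(H)$. This holds for every admissible $L$, so the defining condition for $x \in H^{'{\cal{K}}}$ fails already at this $\epsilon$, giving $x \notin H^{'{\cal{K}}}$. This completes the contrapositive.

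The only mild subtlety — the "main obstacle," though it is minor — is making sure the finite-independence hypothesis is applied to the right finite set and that one has verified $H - L \in Dom({\cal{K}})$ so that ${\cal{K}}(H-L)$ even makes sense; but the definition of $H^{'{\cal{K}}}$ already presupposes $L$ ranges over sets for which ${\cal{K}}(H-L)$ is defined, so this is automatic. One should also be a touch careful about whether $x\in H$ or not: if $x \notin H$ then $H \cap S(x,\epsilon) = \emptyset$ for small $\epsilon$ and $L\cap H=\emptyset$, so $H - L = H$ outright; if $x \in H$ then $H\cap S(x,\epsilon)$ is the single point $\{x\}$ (for small enough $\epsilon$), and removal of $L$ removes at most that one point. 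Either way the symmetric difference between $H$ and $H-L$ is finite, and finite-independence closes the argument.
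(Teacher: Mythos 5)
Your proof is correct and takes essentially the same approach as the paper: the paper's one-line argument is exactly your observation that a non-accumulation point $x$ has a neighborhood $S(x,\epsilon)$ meeting $H$ in at most one point, so removing any $L\subset S(x,\epsilon)$ changes $H$ by only a finite set and finite-independence forces ${\cal{K}}(H-L)={\cal{K}}(H)$. You merely spell out the case distinction and the domain remark that the paper leaves implicit.
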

\begin{proof}If $x\notin H'$ then there is $\epsilon>0$ such that $S(x,\epsilon)$ contains at most one point from $H$.
\end{proof}

\begin{prp}$H^{'{\cal{K}}}$ is closed.
\end{prp}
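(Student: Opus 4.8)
The plan is to show that the complement of $H^{'{\cal{K}}}$ is open, i.e. that if $x\notin H^{'{\cal{K}}}$ then some neighborhood of $x$ is also disjoint from $H^{'{\cal{K}}}$. First I would unwind the definition: $x\notin H^{'{\cal{K}}}$ means there exists $\epsilon>0$ such that for \emph{every} $L\subset S(x,\epsilon)$ we have ${\cal{K}}(H-L)={\cal{K}}(H)$. The natural candidate neighborhood of $x$ to test is $S(x,\epsilon/2)$, and the goal is to verify that no point $y\in S(x,\epsilon/2)$ lies in $H^{'{\cal{K}}}$.

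Next I would fix such a $y$ and pick $\delta=\epsilon/2$ (or more carefully $\delta=\epsilon-|x-y|>0$). The key geometric observation is the inclusion $S(y,\delta)\subset S(x,\epsilon)$: any $L\subset S(y,\delta)$ is automatically a subset of $S(x,\epsilon)$, so the defining property of $x$ applies and yields ${\cal{K}}(H-L)={\cal{K}}(H)$ for all such $L$. Hence the condition "$\forall\epsilon'>0\ \exists L\subset S(y,\epsilon')$ with ${\cal{K}}(H-L)\ne{\cal{K}}(H)$" fails already at the level $\epsilon'=\delta$, so $y\notin H^{'{\cal{K}}}$. Since $y$ was an arbitrary point of $S(x,\delta)$ (with $\delta=\epsilon-|x-y|$, or uniformly $\epsilon/2$ works for the whole ball), this shows $S(x,\epsilon/2)\cap H^{'{\cal{K}}}=\emptyset$, so the complement is open.

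There is essentially no serious obstacle here; the statement is a routine "the bad set defined by a local condition is closed" argument, and the only thing to be careful about is the bookkeeping of radii so that the shrunk ball around $y$ genuinely sits inside $S(x,\epsilon)$. One subtlety worth a sentence: the condition defining membership in $H^{'{\cal{K}}}$ is monotone in $\epsilon$ in the sense that if it fails for some $\epsilon$ it fails for all smaller ones, which is exactly what lets the argument go through. I would also note that nothing about ${\cal{K}}$ (monotonicity, internality, etc.) is used — the result is purely formal — so the proof is short and self-contained.
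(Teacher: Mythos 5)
Your proof is correct, and it is essentially the paper's argument read contrapositively: the paper shows directly that any limit point $x$ of $H^{'{\cal{K}}}$ lies in $H^{'{\cal{K}}}$ by nesting a ball $S(y,\delta)\subset S(x,\epsilon)$ around a nearby point $y\in H^{'{\cal{K}}}$ and pulling the witness set $L$ back, while you use the same nested-ball inclusion to push the ``no witness'' property from $x$ to nearby $y$ and conclude the complement is open. Both hinge on exactly the same observation and neither uses any property of ${\cal{K}}$, so there is no substantive difference.
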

\begin{proof}Let $x\in\mathbb{R}$ such that $\forall\epsilon>0\ \exists y\in S(x,\epsilon)$ such that $y\in H^{'{\cal{K}}}$. Then $\exists\delta>0$ such that $S(y,\delta)\subset S(x,\epsilon)$ and $\exists L\subset S(y,\delta)\text{ such that }{\cal{K}}(H-L)\ne{\cal{K}}(H)$. But then $L\subset S(x,\epsilon)$ showing that $x\in H^{'{\cal{K}}}$.
\end{proof}

\begin{prp}If ${\cal{K}}$ is union-monotone and slice-continuous then 

$\varliminf_{\cal{K}}(H)=\min H^{'{\cal{K}}},\ \varlimsup_{\cal{K}}(H)=\max H^{'{\cal{K}}}$.
\end{prp}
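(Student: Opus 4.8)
The plan is to establish the two equalities by proving each one as a pair of inequalities, exploiting the characterization of $\varliminf_{\cal{K}}$ and $\varlimsup_{\cal{K}}$ in terms of the slices $H^{x\pm}$, together with the continuity of $g(x)={\cal{K}}(H^{x+})$ (respectively $h(x)={\cal{K}}(H^{x-})$) that slice-continuity provides. I will focus on $\varlimsup_{\cal{K}}(H)=\max H^{'{\cal{K}}}$; the statement for $\varliminf_{\cal{K}}$ is symmetric. Write $m=\varlimsup_{\cal{K}}(H)$.

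First I would show $m\in H^{'{\cal{K}}}$, which in particular yields $H^{'{\cal{K}}}\ne\emptyset$ and that the maximum is attained (once the upper bound below is in hand). Fix $\epsilon>0$. By definition of $m=\inf\{x:{\cal{K}}(H)={\cal{K}}(H^{x-})\}$ and by Proposition \ref{pkhcblils}-style reasoning (slice-continuity makes $h(x)={\cal{K}}(H^{x-})$ continuous), for $x$ slightly below $m$ we have ${\cal{K}}(H^{x-})\ne{\cal{K}}(H)$, while for $x\geq m$, continuity of $h$ forces ${\cal{K}}(H^{m-})={\cal{K}}(H)$. Choosing $x_0\in(m-\epsilon,m)$ with ${\cal{K}}(H^{x_0-})\ne{\cal{K}}(H)$ and setting $L=H\cap(x_0,\infty)\cap S(m,\epsilon)$—more carefully, $L=H^{x_0+}\setminus(\text{the part beyond }m+\epsilon)$, but since one also checks via union-monotonicity that removing everything to the right of $m$ does not change the mean, one can instead take $L=H\cap(x_0,m+\epsilon)$ so that $H-L$ agrees in mean with $H^{x_0-}$—we get ${\cal{K}}(H-L)\ne{\cal{K}}(H)$ with $L\subset S(m,\epsilon)$. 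Hence $m\in H^{'{\cal{K}}}$.

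Next I would show $x\leq m$ for every $x\in H^{'{\cal{K}}}$, i.e. $m$ is an upper bound. Suppose $x>m$; I must produce $\epsilon>0$ such that ${\cal{K}}(H-L)={\cal{K}}(H)$ for all $L\subset S(x,\epsilon)$. Pick $\epsilon=(x-m)/2$, so $S(x,\epsilon)\subset(m,\infty)$, and let $L\subset S(x,\epsilon)$ be arbitrary. The idea is that everything in $S(x,\epsilon)$ lies strictly to the right of $m=\varlimsup_{\cal{K}}(H)$, hence is ``irrelevant'' to the mean: using slice-continuity one has ${\cal{K}}(H)={\cal{K}}(H^{m-})$, and then union-monotonicity applied to $H^{m-}$, $H\cap[m,x-\epsilon]$ and the remaining right tail should let me conclude ${\cal{K}}(H-L)={\cal{K}}(H)$ regardless of which points of that far-right block are deleted — the mean is already ``saturated'' by $H^{m-}$. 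Combining the two parts gives $\varlimsup_{\cal{K}}(H)=\max H^{'{\cal{K}}}$, and the $\varliminf$ statement follows by the mirror argument (reflecting via $T_0$ or repeating verbatim with $-$ in place of $+$).

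The main obstacle I anticipate is the second part: making precise why deleting an \emph{arbitrary} subset $L$ of the far-right block $H\cap(m,\infty)$ leaves the mean unchanged. Monotonicity-type axioms typically control what happens when you remove an entire tail $H^{t+}$, not an arbitrary subset of it; so the delicate point is to combine union-monotonicity (or disjoint-monotonicity) with slice-continuity to squeeze ${\cal{K}}(H-L)$ between ${\cal{K}}(H^{m-})={\cal{K}}(H)$ and ${\cal{K}}(H)$ from both sides. Concretely, one writes $H-L=H^{m-}\cup\big((H\cap[m,\infty))-L\big)$ as a disjoint union; the right-hand block has all its elements $\geq m$, so union-monotonicity forces ${\cal{K}}(H-L)\geq{\cal{K}}(H^{m-})={\cal{K}}(H)$, and symmetrically $H-L\subset H$ together with the fact that $H$'s value is pinned at $m$ from the left should give the reverse inequality. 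I would double-check that ``union-monotone'' and ``slice-continuous'' as used earlier in the paper supply exactly these two comparisons; if union-monotonicity only gives one direction, the other must come from re-running the slice-continuity argument on $H-L$ itself, noting $\varlimsup_{\cal{K}}(H-L)\leq m$ and applying Proposition \ref{pkhcblils}.
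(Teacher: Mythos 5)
Your overall plan (show the extremal value is itself an accumulation point, then show nothing beyond it can be) matches the structure of the paper's proof, but both delicate steps are left unproved, and the substitutes you offer do not follow from the stated hypotheses. In the first half, your key claim is that for $L=H\cap(x_0,m+\epsilon)$ one has ${\cal{K}}(H-L)={\cal{K}}(H^{x_0-})$, i.e.\ that the tail $H^{(m+\epsilon)+}$ is irrelevant when attached to $H^{x_0-}$. But ``irrelevance'' of a removal is relative to the ambient set: slice-continuity (via Proposition \ref{pkhcblils}) only gives ${\cal{K}}(H^{m-})={\cal{K}}(H)$, i.e.\ irrelevance of the tail \emph{inside $H$}; nothing in union-monotonicity or slice-continuity transfers this to the smaller set $H^{x_0-}\cup H^{(m+\epsilon)+}$. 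The paper avoids exactly this trap by arguing by contradiction: if removing the block next to the extremal point were mean-preserving, then, since removing the outer tail is also mean-preserving, the two mean-preserving removals of disjoint pieces can be combined (this is where union-monotonicity enters, through the cited result \cite{lamisii} 2.26(2)) into a mean-preserving removal of their union, which forces ${\cal{K}}(H^{(x+\epsilon)+})={\cal{K}}(H)$ (resp.\ ${\cal{K}}(H^{(m-\epsilon)-})={\cal{K}}(H)$), contradicting the sup/inf definition of $\varliminf_{\cal{K}}$, $\varlimsup_{\cal{K}}$. Your direct computation of ${\cal{K}}(H-L)$ needs a ``hereditary irrelevance'' statement that is never established.

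In the second half the gap is one you partly flag yourself, but your proposed fix is not sound: you argue that since the block $(H\cap[m,\infty))-L$ lies entirely to the right of $H^{m-}$, ``union-monotonicity forces ${\cal{K}}(H-L)\geq{\cal{K}}(H^{m-})$''. That inference uses the order-based axiom (monotonicity), not union-monotonicity, and monotonicity is not among the hypotheses; union-monotonicity only lets you combine two unions that are already known to move the mean in the same direction. The reverse inequality is left at the level of ``should give'' (and the auxiliary claim $\varlimsup_{\cal{K}}(H-L)\leq m$ is itself unproved). The paper's argument for this half is different: assuming $x$ beyond the extremal value lies in $H^{'{\cal{K}}}$, it takes the witness $L$ with ${\cal{K}}(H-L)\ne{\cal{K}}(H)$, decomposes $H$ into the three disjoint blocks $H^{x+}-L$, $H^{x-}-L$, $L$, notes that adjoining $L$ alone to $H^{x+}-L$ reproduces the value ${\cal{K}}(H)$, and then applies union-monotonicity (splitting into the cases ${\cal{K}}(H-L)<{\cal{K}}(H)$ and $>{\cal{K}}(H)$) to reach ${\cal{K}}(H)\ne{\cal{K}}(H)$, a contradiction. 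As written, your proposal does not yet contain a correct use of union-monotonicity anywhere, so both directions of both equalities still need proofs.
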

\begin{proof}We show it for the $\min$, the $\max$ is similar. Let $x=\varliminf_{\cal{K}}(H)$. Then $\forall\epsilon>0\ {\cal{K}}(H)\ne{\cal{K}}(H^{(x+\epsilon)+})$ that means that ${\cal{K}}(H)\ne{\cal{K}}(H-(x,x+\epsilon))$. Because otherwise ${\cal{K}}(H)={\cal{K}}(H-(x,x+\epsilon))$ and by \ref{pkhcblils} ${\cal{K}}(H)={\cal{K}}(H-(-\infty,x])$ would imply by \cite{lamisii}2.26(2) that ${\cal{K}}(H)={\cal{K}}(H-(-\infty,x+\epsilon))$ that is a contradiction. This gives that $x\in H^{'{\cal{K}}}$.

Let $x\in H^{'{\cal{K}}}$. Suppose that $x<\varliminf_{\cal{K}}(H)$. Let $\epsilon>0$ such that $x+\epsilon<\varliminf_{\cal{K}}(H)$. We know that $\exists  L\subset S(x,\epsilon)\text{ such that }{\cal{K}}(H-L)\ne{\cal{K}}(H)$. We can write ${\cal{K}}((H^{x+}-L)\cup (H^{x-}-L))={\cal{K}}(H-L)\ne{\cal{K}}(H)$. Then $x<\varliminf_{\cal{K}}(H)$ implies that  ${\cal{K}}((H^{x+}-L)\cup L)={\cal{K}}(H^{x+})={\cal{K}}(H)$. By union-monotonicity (and using that either ${\cal{K}}(H-L)<{\cal{K}}(H)$ or ${\cal{K}}(H-L)>{\cal{K}}(H)$) we get that ${\cal{K}}((H^{x+}-L)\cup (H^{x-}-L)\cup L)={\cal{K}}(H)\ne{\cal{K}}(H)$ which is a contradiction.
\end{proof}

\begin{lem}\label{lflfh}Let $H,L\subset\mathbb{R}$ be bounded Lebesgue measurable, $\lambda(H)>0,\ \lambda(L)>0$, $L\subset H$. If $Avg(H)\ne Avg(L)$ then $Avg(H)\ne Avg(H-L)$.
\end{lem}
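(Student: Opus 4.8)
The plan is to exploit the additivity of the integral $\int x\, d\lambda$ over the partition $H = L \cup (H - L)$ into disjoint measurable pieces, which yields the familiar weighted-average decomposition, and then to run a short indirect argument.

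First I would dispose of a degenerate case: if $\lambda(H - L) = 0$, then $\lambda(H) = \lambda(L)$ and $\int_H x\, d\lambda = \int_L x\, d\lambda$, whence $Avg(H) = Avg(L)$, contrary to hypothesis. So in fact $\lambda(H - L) > 0$, and the quantity $Avg(H - L)$ appearing in the statement is well-defined. (This is the only point that really needs checking, since otherwise the conclusion would not even be meaningful.)

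Now suppose for contradiction that $Avg(H) = Avg(H - L)$. From $L \subset H$ and disjointness of $L$ and $H - L$ we get $\lambda(H)\,Avg(H) = \int_H x\, d\lambda = \int_L x\, d\lambda + \int_{H-L} x\, d\lambda = \lambda(L)\,Avg(L) + \lambda(H-L)\,Avg(H-L)$. Substituting $\lambda(H) = \lambda(L) + \lambda(H-L)$ and $Avg(H-L) = Avg(H)$, the terms carrying the factor $\lambda(H-L)$ cancel, leaving $\lambda(L)\,Avg(H) = \lambda(L)\,Avg(L)$. Since $\lambda(L) > 0$, this forces $Avg(L) = Avg(H)$, contradicting the hypothesis $Avg(H) \ne Avg(L)$.

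There is essentially no obstacle here beyond bookkeeping: the whole argument is the weighted-average identity plus cancellation, and the only care needed is the preliminary observation that $\lambda(H - L) > 0$, handled by the degenerate-case paragraph above. The same identity is already used implicitly in the proofs of the earlier propositions on $Avg^s$ and on $\varliminf_{Avg^1}$, so it fits the established pattern.
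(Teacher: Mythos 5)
Your proof is correct and follows essentially the same route as the paper: write $Avg(H)$ as the weighted average of $Avg(L)$ and $Avg(H-L)$ over the disjoint decomposition $H=L\cup(H-L)$, assume $Avg(H)=Avg(H-L)$, cancel, and contradict $Avg(H)\ne Avg(L)$. Your preliminary observation that $\lambda(H-L)>0$ (so that $Avg(H-L)$ is defined) is a small point the paper leaves implicit, but otherwise the two arguments coincide.
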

\begin{proof}Assume the contrary. Rearranging the equation
\[Avg(H)=\frac{\lambda(H-L)Avg(H-L)+\lambda(L)Avg(L)}{\lambda(H)}\]
we would get that $Avg(H)=Avg(L)$ - a contradiction. 
\end{proof}

\begin{prp}\label{phpke}Let ${\cal{K}}=Avg^1,H\subset\mathbb{R}$ be bounded Lebesgue measurable, $\lambda(H)>0$. Then $H^{'{\cal{K}}}=\{x\in\mathbb{R}:\forall\delta>0\ \lambda(H\cap S(x,\delta))>0\}$.
\end{prp}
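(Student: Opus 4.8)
The plan is to prove set equality by two inclusions, using Lemma \ref{lflfh} as the main engine. Write $E = \{x\in\mathbb{R}:\forall\delta>0\ \lambda(H\cap S(x,\delta))>0\}$ (the \emph{measure-theoretic support} of $H$). First I would establish the inclusion $H^{'{\cal{K}}}\subset E$ by contraposition: if $x\notin E$, then there is $\delta>0$ with $\lambda(H\cap S(x,\delta))=0$, so for every $L\subset S(x,\delta)$ we have $\lambda(L\cap H)=0$, whence $H-L$ differs from $H$ by a null set and $Avg^1(H-L)=Avg^1(H)$; thus $x\notin H^{'{\cal{K}}}$.

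For the reverse inclusion $E\subset H^{'{\cal{K}}}$, fix $x\in E$ and $\epsilon>0$; I must produce $L\subset S(x,\epsilon)$ with $Avg^1(H-L)\ne Avg^1(H)$. The natural candidate is to take a suitable measurable slice of $H$ inside $S(x,\epsilon)$. Since $\lambda(H\cap S(x,\epsilon))>0$, set $L = H\cap S(x,\epsilon)$ (or, if needed, a subset of it of positive measure). To invoke Lemma \ref{lflfh} I need $L\subset H$ (clear), $\lambda(L)>0$ (clear), and $Avg(H)\ne Avg(L)$. The latter is the point requiring care: it can fail for the full slice $H\cap S(x,\epsilon)$. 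The fix is that $L$ lies in an interval of length $2\epsilon$ about $x$, so $Avg(L)\in S(x,\epsilon)$, and by shrinking $\epsilon$ (which we may, since the condition is "$\forall\epsilon>0\ \exists L$") to make $S(x,\epsilon)$ miss the single point $Avg(H)$ — i.e. choosing $\epsilon<|x-Avg(H)|$ whenever $x\ne Avg(H)$ — we force $Avg(L)\ne Avg(H)$, and then Lemma \ref{lflfh} gives $Avg^1(H-L)\ne Avg^1(H)$, so $x\in H^{'{\cal{K}}}$.

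The one remaining case is $x = Avg(H)$. Here the trick above does not apply directly, so I would argue separately: if $x=Avg(H)\in E$, pick any $\epsilon>0$ and look at $H\cap S(x,\epsilon)$; if this slice has average equal to $Avg(H)$, I can instead remove an asymmetric piece of it — e.g. $L = H\cap (x, x+\epsilon)$ or $L = H\cap(x-\epsilon,x)$ — at least one of which has positive measure and average different from $x$ (since a set of positive measure contained in $(x,x+\epsilon)$ has average strictly greater than $x$), again triggering Lemma \ref{lflfh}; the only way both one-sided slices could be null while $x\in E$ is impossible. The main obstacle is exactly this bookkeeping around the point $x = Avg(H)$ and ensuring a one-sided positive-measure slice exists — everything else is a direct application of Lemma \ref{lflfh} together with the elementary fact that the average of a positive-measure subset of an interval lies in that interval.
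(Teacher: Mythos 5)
Your proposal is correct and follows essentially the same route as the paper: the easy inclusion via the fact that removing a null set does not change $Avg^1$, and the reverse inclusion via Lemma \ref{lflfh}, splitting into the cases $x\ne Avg(H)$ (shrink the ball so the slice's average must differ from $Avg(H)$) and $x=Avg(H)$ (use a one-sided slice of positive measure, whose average is necessarily $\ne x$). The paper justifies the last step by strict strong internality of $Avg^1$, while you use the equivalent elementary observation that a positive-measure subset of $(x,x+\epsilon)$ has average strictly greater than $x$; this is an immaterial difference.
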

\begin{proof}If $\lambda(L)=0$ then clearly ${\cal{K}}(H-L)={\cal{K}}(H)$ which implies that if $x\in H^{'{\cal{K}}}$ then $\forall\delta>0\ \lambda(H\cap S(x,\delta))>0$.

Let us assume that $x\in\mathbb{R}$ such that $\forall\delta>0\ \lambda(H\cap S(x,\delta))>0$ holds. Let $\epsilon>0$. By \ref{lflfh} it is enough to find an $L\subset H\cap S(x,\epsilon),\ \lambda(L)>0$ such that $Avg(H)\ne Avg(L)$. If $x\ne Avg(H)$ then it is trivial. If they are equal then either $\lambda(H\cap (x,x+\epsilon))>0$ or $\lambda(H\cap (x-\epsilon,x))>0$. Choose the one with positive measure for $L$. ${\cal{K}}$ is strict strong internal hence ${\cal{K}}(L)\ne x$.
\end{proof}

\begin{cor}Let ${\cal{K}}=Avg^1,H\subset\mathbb{R},\lambda(H)>0$. If the lower Lebesgue density of $x\in\mathbb{R}$ regarding $H$ is greater than 0 then $x\in H^{'{\cal{K}}}$. Hence almost every point of $H$ is in $H^{'{\cal{K}}}$.\qed
\end{cor}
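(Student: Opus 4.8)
The plan is to deduce this corollary directly from Proposition \ref{phpke}, which characterizes $H^{'{\cal{K}}}$ (for ${\cal{K}}=Avg^1$) as the set of points $x$ with $\lambda(H\cap S(x,\delta))>0$ for every $\delta>0$. So the whole task reduces to relating the \emph{lower Lebesgue density} of $x$ with respect to $H$, namely $\underline{d}(x,H)=\liminf_{\delta\to 0^+}\frac{\lambda(H\cap S(x,\delta))}{2\delta}$, to this positivity condition, and then invoking the Lebesgue density theorem.

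First I would observe the trivial implication: if $\underline{d}(x,H)>0$, then in particular $\frac{\lambda(H\cap S(x,\delta))}{2\delta}>0$ for all sufficiently small $\delta>0$, hence $\lambda(H\cap S(x,\delta))>0$ for those $\delta$; and since $\delta\mapsto\lambda(H\cap S(x,\delta))$ is nondecreasing, positivity for small $\delta$ forces positivity for all $\delta>0$. By Proposition \ref{phpke} this gives $x\in H^{'{\cal{K}}}$, which is the first assertion.

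Next, for the second sentence, I would recall the Lebesgue density theorem: for a Lebesgue measurable set $H$, almost every point $x\in H$ is a density point, i.e.\ $\lim_{\delta\to 0^+}\frac{\lambda(H\cap S(x,\delta))}{2\delta}=1$. At such a point the lower density equals $1>0$, so by the first part $x\in H^{'{\cal{K}}}$. Since the set of non-density points of $H$ has measure zero, we conclude that almost every point of $H$ lies in $H^{'{\cal{K}}}$, which is the claim.

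There is essentially no obstacle here — the content is entirely carried by Proposition \ref{phpke} together with the classical density theorem; the only thing to be a little careful about is the passage from "positive density" (a statement about small $\delta$) to "positive measure in every neighborhood" (a statement about all $\delta$), which is immediate from monotonicity of $\delta\mapsto\lambda(H\cap S(x,\delta))$. One could also note that the converse of the first implication fails: a point may have lower density $0$ yet still satisfy $\lambda(H\cap S(x,\delta))>0$ for all $\delta$, so $H^{'{\cal{K}}}$ is genuinely larger than the set of positive-lower-density points.
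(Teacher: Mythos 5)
Your proof is correct and follows exactly the route the paper intends: the corollary is stated without proof precisely because it is an immediate consequence of Proposition \ref{phpke} together with the Lebesgue density theorem, which is what you spell out (including the harmless monotonicity remark about $\delta\mapsto\lambda(H\cap S(x,\delta))$). Nothing is missing.
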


\begin{lem}\label{lavgmin}Let $a<b$ and $h<b-a$ be given. Then 
\[\min\big\{Avg^1(H): H\subset[a,b]\text{ is Lebesgue measurable, }\lambda(H)=h\big\}=a+\frac{h}{2},\]
\[\max\big\{Avg^1(H): H\subset[a,b]\text{ is Lebesgue measurable, }\lambda(H)=h\big\}=b-\frac{h}{2}.\]
\end{lem}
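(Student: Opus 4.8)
The plan is to prove both equalities by the same symmetric argument (the maximum statement follows from the minimum one via the reflection $T_{(a+b)/2}$, which preserves Lebesgue measure and sends $[a,b]$ to itself while sending $Avg^1(H)$ to $a+b-Avg^1(H)$; so I will concentrate on the minimum). First I would establish the lower bound $Avg^1(H)\geq a+\frac h2$ for every measurable $H\subset[a,b]$ with $\lambda(H)=h$. The intuitive reason is that among all sets of measure $h$, the one hugging the left endpoint as tightly as possible, namely the interval $[a,a+h]$, has the smallest average; any mass sitting farther to the right only pushes the average up.

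To make the lower bound rigorous I would use a \emph{rearrangement / mass-transport} argument. Compare $H$ with the interval $I=[a,a+h]$. Write $H=(H\cap I)\cup(H\setminus I)$ and $I=(H\cap I)\cup(I\setminus H)$, so that $H$ and $I$ share the common part $H\cap I$ and differ by the pieces $H\setminus I\subset(a+h,b]$ and $I\setminus H\subset[a,a+h)$, which have equal measure, say $m=\lambda(H\setminus I)=\lambda(I\setminus H)$. If $m=0$ then $H=I$ up to a null set and we are done; otherwise every point of $H\setminus I$ lies strictly to the right of every point of $I\setminus H$, hence $Avg^1(H\setminus I)>a+h\geq Avg^1(I\setminus H)$ (using strict strong internality, Corollary~\ref{cavg1ssii}, for the second inequality, or simply the containment $I\setminus H\subset[a,a+h]$). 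Now expand both $Avg^1(H)$ and $Avg^1(I)$ as the measure-weighted average of the common part and the differing part: since $\lambda(H)=\lambda(I)=h$ and the common part has the same measure and average in both, the inequality $Avg^1(H\setminus I)\geq Avg^1(I\setminus H)$ transfers directly to $Avg^1(H)\geq Avg^1(I)=a+\frac h2$.

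Finally, for attainment I would simply exhibit the interval $H=[a,a+h]$, which is measurable, contained in $[a,b]$ since $h<b-a$, has $\lambda(H)=h$, and satisfies $Avg^1([a,a+h])=a+\frac h2$ by direct computation from Definition~\ref{davg}. This shows the infimum is a genuine minimum and completes the proof; the maximum case is obtained by applying the already-proved minimum case to $T_{(a+b)/2}(H)$.

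The main obstacle is organizing the weighted-average bookkeeping cleanly: one must be careful that $H\cap I$ may be null (in which case its "average" is undefined and it should simply be dropped from the convex combination), and that the pieces $H\setminus I$ and $I\setminus H$ may individually be null, so the argument should first dispose of the degenerate cases $\lambda(H\cap I)=0$ or $m=0$ and then run the convex-combination comparison only on the genuinely positive-measure pieces. None of this is deep, but it is the step where a sloppy write-up would hide a gap.
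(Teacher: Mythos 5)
Your argument is correct, but it takes a genuinely different route from the paper. The paper compares $\int_H x\,d\lambda$ with $\int_{[a,a+h]}x\,d\lambda$ by discretization: it cuts both $[a,a+h]$ and $H$ into $n$ pieces of measure $\frac{h}{n}$, builds step functions $f_n\leq g_n$ dominated by the observation $a+\frac{k}{n}h\leq x_k$, and passes to the limit $n\to\infty$ to get the integral inequality, then handles the maximum "similarly". You instead compare $H$ directly with $I=[a,a+h]$ through the symmetric difference: since $\lambda(H\setminus I)=\lambda(I\setminus H)=m$ and $H\setminus I\subset(a+h,b]$ lies entirely to the right of $I\setminus H\subset[a,a+h)$, the weighted-average (equivalently, the one-line estimate $\int_{H\setminus I}x\,d\lambda\geq(a+h)m\geq\int_{I\setminus H}x\,d\lambda$) gives $Avg^1(H)\geq Avg^1(I)=a+\frac{h}{2}$ with no limiting process at all; the maximum then follows cleanly from the reflection $T_{(a+b)/2}$, which preserves measure and sends $Avg^1(H)$ to $a+b-Avg^1(H)$. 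Your version is shorter and avoids the approximation bookkeeping of the paper (existence of the cut points $x_k$, convergence of $f_n$ and $g_n$), at the small cost of the degenerate-case housekeeping you already flag ($m=0$ or a null common part), which is most painlessly dispatched by phrasing the comparison as the integral inequality rather than as a convex combination of averages of pieces. One cosmetic remark: strict strong internality (Corollary~\ref{cavg1ssii}) is not needed anywhere — plain internality, or just the containments of the two difference sets, gives the weak inequalities that suffice.
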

\begin{proof}We prove the statement for $\min$, the other is similar. 

Clearly $Avg^1([a,a+h])=a+\frac{h}{2}$. 

We show that $\int\limits_{H}xd\lambda\geq\int\limits_{[a,a+h]}xd\lambda$ holds and that will give the statement.

Let us take similar step functions in the following way. Let $n\in\mathbb{N}$. Set $f_n(x)=a+\frac{k}{n}h$ if $x\in[a+\frac{k}{n}h,a+\frac{k+1}{n}h]\ (k\in\{0,\dots,n-1\})$. Clearly for $H$ and $k\in\{0,\dots,n-1\}$ there is $x_k\in[a,b]$ such that $\lambda(H\cap[x_k,x_{k+1}])=\frac{1}{n}h$. Now set $g_n(x)=x_k$ if $x\in H\cap [x_k,x_{k+1})$. Obviously $f_n\to x$ on $[a,a+h]$ and $g_n\to x$ on $H$. Hence $\int\limits_{[a,a+h]}f_nd\lambda\to\int\limits_{[a,a+h]}xd\lambda$ and $\int\limits_{H}g_nd\lambda\to\int\limits_{H}xd\lambda$. But $a+\frac{k}{n}h\leq x_k$ implies that $\int\limits_{[a,a+h]}f_nd\lambda\leq\int\limits_{H}g_nd\lambda$ for every $n\in\mathbb{N}$ which gives the statement.
\end{proof}

\begin{prp}$Avg^1$ is not self-accumulated.
\end{prp}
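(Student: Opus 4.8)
The plan is to exhibit a single bounded measurable set $H$ with $\lambda(H)>0$ for which $Avg^1(H^{'{\cal{K}}})\ne Avg^1(H)$ (with ${\cal{K}}=Avg^1$). The starting point is Proposition \ref{phpke}, which identifies $H^{'{\cal{K}}}$ with the measure-theoretic support $\{x:\forall\delta>0\ \lambda(H\cap S(x,\delta))>0\}$. The key idea is that this support can have an average completely different from that of $H$ itself: a set that is spread out \emph{densely} has full support even when almost all of its mass is concentrated asymmetrically. Concretely, I would fix a small $\epsilon\in(0,\tfrac14)$ and take $D\subset(\tfrac12,1)$ to be a dense open set with $\lambda(D)=\epsilon$ (for instance a union of shrinking open intervals centred at the rationals of $(\tfrac12,1)$, with lengths summing to $\epsilon$), and set $H=[0,\tfrac12]\cup D$. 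This $H$ is bounded, measurable, and of positive measure, so it lies in $Dom(Avg^1)$.

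The first substantive step is to identify $H^{'{\cal{K}}}$ via the support characterization of Proposition \ref{phpke}. I would check three regimes: for $x\in[0,\tfrac12]$ every neighborhood meets $[0,\tfrac12]$ in positive measure; for $x\in[\tfrac12,1]$ every neighborhood meets the dense open set $D$ in a nonempty open subset, hence in positive measure; and for $x\notin[0,1]$ a sufficiently small neighborhood misses $H$ entirely. This yields $H^{'{\cal{K}}}=[0,1]$, so $Avg^1(H^{'{\cal{K}}})=Avg^1([0,1])=\tfrac12$.

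The second step is to push $Avg^1(H)$ strictly below $\tfrac12$. Since $\int_{[0,1/2]}x\,d\lambda=\tfrac18$, $0\le\int_D x\,d\lambda\le\lambda(D)=\epsilon$, and $\lambda(H)=\tfrac12+\epsilon$, one obtains the bound $Avg^1(H)\le\frac{1/8+\epsilon}{1/2+\epsilon}$, which is strictly less than $\tfrac12$ precisely when $\epsilon<\tfrac14$. Hence $Avg^1(H)<\tfrac12=Avg^1(H^{'{\cal{K}}})$, and $Avg^1$ is not self-accumulated.

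The only point needing care — and the main (mild) obstacle — is arranging full support and an off-centre average simultaneously: the density of $D$ is what forces the support up to the whole interval $[0,1]$, while the smallness of $\lambda(D)$ is what keeps the bulk of the mass, and therefore the average, concentrated in $[0,\tfrac12]$ near $\tfrac14$. Everything else reduces to a routine verification through the explicit description in Proposition \ref{phpke}.
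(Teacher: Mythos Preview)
Your proof is correct and follows essentially the same strategy as the paper: build $H$ as the union of a solid interval and a thin dense piece so that, via Proposition~\ref{phpke}, the support $H^{'{\cal{K}}}$ becomes a full interval whose midpoint differs from $Avg^1(H)$. The paper uses $H=K\cup[1,2]$ with $K\subset[0,1]$ dense of measure $\tfrac12$ and then appeals to Lemma~\ref{lavgmin} to bound $Avg^1(K)$ from below; your choice of an \emph{open} dense $D$ of small measure lets you replace that lemma by the crude estimate $\int_D x\,d\lambda\le\lambda(D)$, a mild but pleasant simplification.
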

\begin{proof}Let $K$ be a Lebesgue measurable set in $[0,1]$ that is dense such that $\forall x\in[0,1]\ \forall\epsilon>0\ \lambda(K\cap S(x,\epsilon))>0$. Moreover let $\lambda(K)=\frac{1}{2}$. Evidently such set exists.

Then let $H=K\cup [1,2]$. By \ref{phpke} $Avg^1(H^{'Avg^1})=Avg^1([0,2])=1$. By \ref{lavgmin} we get that  
\[Avg^1(H)=\frac{0.5Avg^1(K)+Avg^1([1,2])}{1.5}\geq\frac{0.5Avg^1([0,0.5])+Avg^1([1,2])}{1.5}>1\]
showing that $Avg^1$ is not self-accumulated.
\end{proof}

\begin{lem}\label{lamap}Let $H\subset\mathbb{R}$ be finite. Then $H^{'\A}=H-\{\A(H)\}$.
\end{lem}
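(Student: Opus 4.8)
The plan is to prove the two inclusions directly, using the elementary averaging identity for $\A$ together with the finiteness of $H$. A useful preliminary remark is that $H-L=H-(L\cap H)$, so only the trace $L\cap H\subseteq H\cap S(x,\epsilon)$ influences $\A(H-L)$; hence in the definition of $H^{'\A}$ we may restrict attention to subsets $L\subseteq H$. For such $L$ with $H-L\ne\emptyset$ one has $|H|\cdot\A(H)=\sum_{a\in L}a+|H-L|\cdot\A(H-L)$, which is the only computation needed.

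\smallskip

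For the inclusion $H-\{\A(H)\}\subseteq H^{'\A}$, take $x\in H$ with $x\ne\A(H)$; then necessarily $|H|\ge 2$. Given $\epsilon>0$, set $L=\{x\}\subseteq S(x,\epsilon)$. From $|H|\cdot\A(H)=x+(|H|-1)\cdot\A(H-\{x\})$ we see that $\A(H-\{x\})=\A(H)$ would force $x=\A(H)$, contrary to the choice of $x$. Hence $\A(H-L)\ne\A(H)$, and since $\epsilon$ was arbitrary, $x\in H^{'\A}$.

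\smallskip

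For the reverse inclusion we show that a point $x\notin H-\{\A(H)\}$ cannot lie in $H^{'\A}$. If $x\notin H$, then since $H$ is finite there is $\epsilon>0$ with $S(x,\epsilon)\cap H=\emptyset$, so $H-L=H$ for every $L\subseteq S(x,\epsilon)$ and $x\notin H^{'\A}$. If $x=\A(H)\in H$, choose $\epsilon>0$ with $S(x,\epsilon)\cap H=\{x\}$. Then for any $L\subseteq S(x,\epsilon)$ either $L\cap H=\emptyset$, giving $H-L=H$ and $\A(H-L)=\A(H)$; or $L\cap H=\{x\}$, giving $H-L=H-\{x\}$, and then the identity $|H|\cdot\A(H)=x+(|H|-1)\cdot\A(H-\{x\})$ combined with $\A(H)=x$ yields $\A(H-\{x\})=x=\A(H)$ when $|H|\ge 2$, while if $|H|=1$ then $H-L=\emptyset\notin Dom(\A)$ and contributes nothing. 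In every case no admissible $L$ witnesses a change of the mean, so $x\notin H^{'\A}$.

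\smallskip

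There is no real obstacle here; the only point requiring a little care is the bookkeeping about when $H-L$ becomes empty, in which case $\A(H-L)$ is undefined and cannot serve as a witness of $\A(H-L)\ne\A(H)$. This is precisely why the single point $\A(H)$ is excluded: removing it is the unique way (for a subset concentrated near it) to leave the mean unchanged, or to annihilate a singleton.
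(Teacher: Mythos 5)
Your proof is correct and follows essentially the same route as the paper's: reduce to the trace $L\cap H$, use the identity $|H|\,\A(H)=\sum_{a\in L}a+|H-L|\,\A(H-L)$ for single-point removal, and isolate $\A(H)$ by a neighborhood meeting $H$ only there. Your write-up is merely more explicit about the edge cases ($x\notin H$, $|H|=1$, and $H-L=\emptyset$ falling outside $Dom(\A)$), which the paper leaves implicit.
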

\begin{proof}Let $|H|=n$. Clearly $H^{'\A}\subset H$. If $k\in H$ then \[\A(H)=\frac{\sum\limits_{h\in H}h}{n}\ne\frac{\sum\limits_{h\in H,h\ne k}h}{n-1}=\A(H-\{k\})\]
if and only if $k\ne\A(H-\{k\})$ if and only if $k\ne\A(H)$. This gives the statement.
\end{proof}

\begin{prp}Let ${\cal{K}}={\cal{M}}^{acc}$, $H\in Dom({\cal{K}})$ and $lev(H)=n\in\mathbb{N}$. Then $H^{'{\cal{K}}}=H^{(n)}-\{{\cal{K}}(H)\}$.
\end{prp}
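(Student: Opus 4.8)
The plan is to reduce everything to the finite case handled by Lemma~\ref{lamap} applied to the derived set $H^{(n)}$, exploiting the structure of ${\cal{M}}^{acc}$ at a set of level $n$, namely ${\cal{M}}^{acc}(H)=\A(H^{(n)})$ and the fact that the $n$-th derived set of a level-$n$ set is finite and nonempty. I would first recall (or cite from \cite{lamis},\cite{lamisii}) that for $H$ with $lev(H)=n$ we have $H^{(n)}$ finite, $H^{(n)}\neq\emptyset$, and ${\cal{M}}^{acc}(H)=\A(H^{(n)})$; also that removing a subset $L$ that lies in a small neighbourhood $S(x,\epsilon)$ of a single point $x$ does not lower the level and changes $(H-L)^{(n)}$ only in a controlled way near $x$.

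For the inclusion $H^{'{\cal{K}}}\subset H^{(n)}-\{{\cal{K}}(H)\}$: suppose $x\in H^{'{\cal{K}}}$. First, $x\in H^{(n)}$, because if $x\notin H^{(n)}$ then (since $H^{(n)}$ is closed) there is $\epsilon>0$ with $S(x,\epsilon)\cap H^{(n)}=\emptyset$, and then for any $L\subset S(x,\epsilon)$ the set $H-L$ still has level $n$ with $(H-L)^{(n)}=H^{(n)}$, so ${\cal{K}}(H-L)=\A((H-L)^{(n)})=\A(H^{(n)})={\cal{K}}(H)$, contradicting $x\in H^{'{\cal{K}}}$. Second, $x\neq{\cal{K}}(H)$: choose $\epsilon$ so small that $S(x,\epsilon)$ meets $H^{(n)}$ only in $\{x\}$; any $L\subset S(x,\epsilon)$ either leaves $(H-L)^{(n)}$ equal to $H^{(n)}$ (if $x\notin (H-L)^{(n)}$ is impossible for all such $L$... ) — more carefully, for suitably chosen $L$ removing all of $H$ near $x$ we get $(H-L)^{(n)}=H^{(n)}-\{x\}$ or $=H^{(n)}$; since ${\cal{K}}(H-L)\neq{\cal{K}}(H)$ for some such $L$, we must be in the case $(H-L)^{(n)}=H^{(n)}-\{x\}$ and $\A(H^{(n)}-\{x\})\neq\A(H^{(n)})$, which by the computation in Lemma~\ref{lamap} forces $x\neq\A(H^{(n)})={\cal{K}}(H)$.

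For the reverse inclusion $H^{(n)}-\{{\cal{K}}(H)\}\subset H^{'{\cal{K}}}$: let $x\in H^{(n)}$ with $x\neq{\cal{K}}(H)=\A(H^{(n)})$, and fix $\epsilon>0$; shrinking $\epsilon$ we may assume $S(x,\epsilon)\cap H^{(n)}=\{x\}$. Take $L=H\cap S(x,\epsilon/2)$ (or a slightly enlarged neighbourhood so that $x\notin(H-L)'$ up to order $n$); then $H-L$ has level $n$ and $(H-L)^{(n)}=H^{(n)}-\{x\}$, so ${\cal{K}}(H-L)=\A(H^{(n)}-\{x\})$. By Lemma~\ref{lamap} (the ``only if'' direction of its internal equivalence) $\A(H^{(n)}-\{x\})\neq\A(H^{(n)})$ precisely because $x\neq\A(H^{(n)})$, hence ${\cal{K}}(H-L)\neq{\cal{K}}(H)$, so $x\in H^{'{\cal{K}}}$.

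The main obstacle I anticipate is the bookkeeping on derived sets: verifying precisely that $L\subset S(x,\epsilon)$ with $\epsilon$ small enough, and the right choice of $L$, yields $(H-L)^{(n)}$ equal to either $H^{(n)}$ or $H^{(n)}-\{x\}$ and nothing else, i.e.\ that removing points of $H$ inside a punctured neighbourhood of $x$ cannot create new accumulation points of order $\le n$ nor destroy the ones away from $x$. This is intuitively clear since ${\cal{M}}^{acc}$ and the level are local notions, but it is the step that needs the careful argument; once it is in place the rest is just Lemma~\ref{lamap} transported from $H$ to $H^{(n)}$.
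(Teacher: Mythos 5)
Your route is the paper's route: the paper disposes of this proposition with the single remark that it is a straightforward consequence of Lemma~\ref{lamap}, and your write-up is exactly the intended fleshing-out, namely the reduction ${\cal{M}}^{acc}(H)=\A(H^{(n)})$ plus the bookkeeping that for $L\subset S(x,\epsilon)$ with $\epsilon$ small the set $(H-L)^{(n)}$ is either $H^{(n)}$ or $H^{(n)}-\{x\}$. The locality step you flag as the main obstacle is indeed unproblematic: removing points only shrinks derived sets, and if $cl(S(x,\epsilon))$ meets $H^{(n)}$ in at most $\{x\}$, then $(H-L)^{(k)}$ agrees with $H^{(k)}$ on the open set $\mathbb{R}-cl(S(x,\epsilon))$ for every $k$, which gives exactly the dichotomy you need (take $\epsilon$ so that the closed ball, not just the open one, avoids $H^{(n)}-\{x\}$; this is free since $H^{(n)}$ is finite).

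There is, however, one step that genuinely fails: the ``fact'' you recall at the outset, that removing $L\subset S(x,\epsilon)$ does not lower the level, and the ensuing tacit use of ${\cal{K}}(H-L)=\A\big((H-L)^{(n)}\big)$ in the forward inclusion. This is false precisely when $H^{(n)}$ is a singleton, i.e. $H^{(n)}=\{{\cal{K}}(H)\}$: removing all of $H$ near that point makes $(H-L)^{(n)}=\emptyset$, the level drops, and ${\cal{K}}(H-L)$ is computed from a lower derived set, so your conclusion ``we must be in the case $(H-L)^{(n)}=H^{(n)}-\{x\}$'' has no meaning there. In fact in that case the stated equality itself fails: for $H=\{0\}\cup\{\frac{1}{k}:k\in\mathbb{N}\}\cup\{5\}$ one has $lev(H)=1$, $H^{(1)}=\{0\}$, ${\cal{K}}(H)=0$, so the right-hand side is empty, yet for every $\epsilon>0$ the choice $L=S(0,\epsilon)$ leaves a finite set with positive arithmetic mean, so $0\in H^{'{\cal{K}}}$. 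So your argument (like the paper's one-line proof) is complete only under the additional assumption $|H^{(n)}|\geq 2$ (equivalently ${\cal{K}}(H)\notin H^{(n)}$ or $H^{(n)}$ not a singleton); when $|H^{(n)}|\geq 2$ the level cannot drop for the relevant removals, since $(H-L)^{(n)}\supset H^{(n)}-\{x\}\neq\emptyset$, and then your two inclusions go through via Lemma~\ref{lamap} exactly as you describe. You should either add this hypothesis and note where it is used, or point out the singleton edge case explicitly.
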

\begin{proof}It is a straightforward consequence of \ref{lamap}.
\end{proof}

\begin{cor}${\cal{M}}^{acc}$ is self-accumulated.\qed
\end{cor}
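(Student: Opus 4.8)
The plan is to combine the preceding proposition with Lemma~\ref{lamap}. Write ${\cal{K}}={\cal{M}}^{acc}$ and let $H\in Dom({\cal{K}})$ with $lev(H)=n$. By the preceding proposition $H^{'{\cal{K}}}=H^{(n)}-\{{\cal{K}}(H)\}$. Since $lev(H)=n$, the derived set $H^{(n)}$ is finite, hence so is $H^{'{\cal{K}}}$; and on finite sets ${\cal{M}}^{acc}$ reduces to the arithmetic mean (a finite set equals its own $0$-th derived set), so ${\cal{K}}(H^{'{\cal{K}}})=\A(H^{'{\cal{K}}})$.

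It then remains to check that $\A\big(H^{(n)}-\{{\cal{K}}(H)\}\big)={\cal{K}}(H)=\A(H^{(n)})$. Put $F=H^{(n)}$ and $a=\A(F)$. If $a\notin F$ then $H^{'{\cal{K}}}=F$ and there is nothing to prove. If $a\in F$, say $|F|=m$, then I would invoke the elementary identity already used in the proof of Lemma~\ref{lamap}:
\[
\A(F-\{a\})=\frac{\big(\sum_{h\in F}h\big)-a}{m-1}=\frac{ma-a}{m-1}=a=\A(F).
\]
Either way $\A(H^{'{\cal{K}}})={\cal{K}}(H)$, whence ${\cal{K}}(H^{'{\cal{K}}})={\cal{K}}(H)$, i.e. ${\cal{M}}^{acc}$ is self-accumulated.

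I do not expect a genuine obstacle here: the content is entirely carried by the previous proposition and by the averaging identity above. The one point that needs a word of care is that for the equation ${\cal{K}}(H^{'{\cal{K}}})={\cal{K}}(H)$ to be meaningful we need $H^{'{\cal{K}}}\in Dom({\cal{K}})$, which forces $|H^{(n)}|\geq 2$ (otherwise $H^{(n)}=\{{\cal{K}}(H)\}$ and $H^{'{\cal{K}}}=\emptyset$). I would handle this either by remarking that it is built into the conventions on $Dom({\cal{M}}^{acc})$, or by stating the corollary for those $H$ with $H^{'{\cal{K}}}$ nonempty. Apart from this bookkeeping the proof is immediate, exactly as in Lemma~\ref{lamap}.
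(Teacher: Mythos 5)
Your argument is correct and is essentially the paper's: the corollary is left as an immediate consequence (\qed) of the preceding proposition $H^{'{\cal{K}}}=H^{(n)}-\{{\cal{K}}(H)\}$ together with the averaging identity already exhibited in Lemma~\ref{lamap}, which is exactly what you spell out. Your remark about the degenerate case $H^{(n)}=\{{\cal{K}}(H)\}$ (so $H^{'{\cal{K}}}=\emptyset\notin Dom({\cal{K}})$) is a fair bookkeeping point that the paper passes over silently.
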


We need a similar lemma than in \cite{lamis} 3.2.

\begin{lem}\label{ledsm2}Let $(H_n),(L_n)$ be two infinite sequences of finite sets such that all sets are uniformly bounded, $\forall n\ L_n\subset H_n$ and $\A(H_n)\to a$. Moreover $\lim_{n\to\infty}\frac{|L_n|}{|H_n|}=0$. Then $\A(H_n - L_n)\to a$.
\end{lem}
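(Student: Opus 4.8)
The plan is to relate the arithmetic mean of $H_n - L_n$ to that of $H_n$ via the identity that splits the sum over $H_n$ into the part over $L_n$ and the part over $H_n - L_n$. Writing $|H_n| = m_n$ and $|L_n| = \ell_n$, we have $\A(H_n) = \frac{1}{m_n}\sum_{h\in H_n} h$ and similarly $\A(H_n - L_n) = \frac{1}{m_n - \ell_n}\sum_{h\in H_n - L_n} h$, so that
\[
\A(H_n) = \frac{(m_n-\ell_n)\A(H_n - L_n) + \ell_n\A(L_n)}{m_n}.
\]
Solving this for $\A(H_n - L_n)$ gives
\[
\A(H_n - L_n) = \frac{m_n}{m_n-\ell_n}\A(H_n) - \frac{\ell_n}{m_n-\ell_n}\A(L_n).
\]

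Next I would control each term using the hypotheses. Since $\ell_n/m_n \to 0$, we get $m_n/(m_n-\ell_n) = 1/(1-\ell_n/m_n) \to 1$ and $\ell_n/(m_n-\ell_n) = (\ell_n/m_n)/(1-\ell_n/m_n) \to 0$. The factor $\A(H_n)$ converges to $a$, hence is bounded, so the first term tends to $a$. For the second term, uniform boundedness of all the sets means there is $M$ with $L_n \subset [-M, M]$ for all $n$, hence $|\A(L_n)| \leq M$; multiplying a bounded quantity by something tending to $0$ yields a term tending to $0$. Combining, $\A(H_n - L_n) \to a$. One should note in passing that $L_n \subsetneq H_n$ eventually (so that $H_n - L_n$ is nonempty and the denominator $m_n - \ell_n$ is positive), which again follows from $\ell_n/m_n \to 0$ since that forces $\ell_n < m_n$ for large $n$.

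There is no real obstacle here; the only point requiring a touch of care is ensuring the denominators $m_n - \ell_n$ are eventually positive before dividing, and that boundedness of $(\A(H_n))$ (which follows from convergence) and of $(\A(L_n))$ (which follows from uniform boundedness of the $L_n$) is invoked correctly so that the "bounded times null" argument is legitimate. Everything else is the routine algebra of the weighted-average decomposition together with elementary limit laws. This mirrors the argument of \cite{lamis} 3.2 referenced just before the statement, the difference being that here the "small" part is measured by cardinality ratio rather than by whatever quantity was used there.
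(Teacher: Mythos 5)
Your proposal is correct and uses exactly the decomposition the paper uses, namely $\A(H_n)=\frac{|H_n-L_n|}{|H_n|}\A(H_n-L_n)+\frac{|L_n|}{|H_n|}\A(L_n)$, together with boundedness of $\A(L_n)$ and the ratios tending to $1$ and $0$. Your added remarks about eventual positivity of $|H_n|-|L_n|$ are fine but the argument is essentially the paper's own.
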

\begin{proof}Clearly 
\[\A(H_n)=\frac{|H_n-L_n|}{|H_n|}\A(H_n-L_n)+\frac{|L_n|}{|H_n|}\A(L_n).\] 
$\A(L_n)$ is bounded, $\frac{|L_n|}{|H_n|}\to 0$ and $\frac{|H_n-L_n|}{|H_n|}\to 1$ give the statement.
\end{proof}

\begin{ex}Let ${\cal{K}}={\cal{M}}^{iso}$. We construct a set $H$ for which $H'$ is infinite while $H^{'{\cal{K}}}=\emptyset$.
\end{ex}
\begin{proof}Let us take the Cantor set $C$. Its complement $[0,1]-C$ can be written in the form $\cup_{i=1}^{\infty}I_i$ where $(I_i)$ are the usual open disjoint intervals. Now for each end point of each interval add convergent sequence that converge to that point and remains in the interval. Moreover do it in the way that let the added sequences be symmetric to $\frac{1}{2}$. Let $H$ be union of $C$ and the points of the added sequences. Clearly $H'=C$ and by symmetry ${\cal{M}}^{iso}(H)$ exists and equals to $\frac{1}{2}$. If $x\in C$ then even if we leave out a whole $\epsilon$ neighbourhood of $x$ from $H$, it would not affect the mean by \ref{ledsm2}.
\end{proof}

\begin{df}$H\subset\mathbb{R}$ is called ${\cal{K}}$-closed if $H^{'{\cal{K}}}\subset H$.
\end{df}

\begin{prp}Let ${\cal{K}}=Avg^1, H_1,H_2,H_i\in Dom\ {\cal{K}}\ (i\in I)$. Then $(H_1\cup H_2)^{'{\cal{K}}}=H_1^{'{\cal{K}}}\cup H_2^{'{\cal{K}}}$ and $(\bigcap\limits_{i\in I}H_i)^{'{\cal{K}}}\subset \bigcap\limits_{i\in I}H_i^{'{\cal{K}}}$.
\end{prp}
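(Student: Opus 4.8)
The plan is to use the clean characterization of $Avg^1$-accumulation points from Proposition \ref{phpke}: for ${\cal{K}}=Avg^1$ and a bounded Lebesgue measurable $H$ with $\lambda(H)>0$, we have $x\in H^{'{\cal{K}}}$ if and only if $\lambda(H\cap S(x,\delta))>0$ for every $\delta>0$. In other words, $H^{'{\cal{K}}}$ is exactly the set of points of ``positive local Lebesgue measure'' of $H$, which I will abbreviate by writing $D(H)=\{x:\forall\delta>0\ \lambda(H\cap S(x,\delta))>0\}$, so that $H^{'{\cal{K}}}=D(H)$. The whole proposition then reduces to two elementary set-theoretic facts about the operator $D$: that $D(H_1\cup H_2)=D(H_1)\cup D(H_2)$ and that $D(\bigcap_{i\in I}H_i)\subset\bigcap_{i\in I}D(H_i)$.

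For the union, the inclusion $D(H_1)\cup D(H_2)\subset D(H_1\cup H_2)$ is immediate from monotonicity of $D$ (if $H\subset K$ then $\lambda(H\cap S(x,\delta))\le\lambda(K\cap S(x,\delta))$, so $D(H)\subset D(K)$), and monotonicity also handles the intersection inclusion at once: since $\bigcap_i H_i\subset H_j$ for every $j$, we get $D(\bigcap_i H_i)\subset D(H_j)$ for every $j$, hence $D(\bigcap_i H_i)\subset\bigcap_j D(H_j)$. The only nontrivial direction is $D(H_1\cup H_2)\subset D(H_1)\cup D(H_2)$. Here I would argue by contraposition: suppose $x\notin D(H_1)$ and $x\notin D(H_2)$; then there are $\delta_1,\delta_2>0$ with $\lambda(H_1\cap S(x,\delta_1))=0$ and $\lambda(H_2\cap S(x,\delta_2))=0$. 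Taking $\delta=\min\{\delta_1,\delta_2\}$ and using subadditivity of $\lambda$, $\lambda((H_1\cup H_2)\cap S(x,\delta))\le\lambda(H_1\cap S(x,\delta))+\lambda(H_2\cap S(x,\delta))=0$, so $x\notin D(H_1\cup H_2)$.

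One small bookkeeping point to check: Proposition \ref{phpke} is stated for a single bounded measurable set of positive measure, and here $H_1\cup H_2$ and the $H_i$ are assumed to lie in $Dom\ {\cal{K}}$, which already guarantees they are bounded measurable with positive measure; for the intersection one should note that $\bigcap_{i\in I}H_i$ need not be in $Dom\ {\cal{K}}$ (it may have measure zero or be empty), but the statement $(\bigcap_i H_i)^{'{\cal{K}}}\subset\bigcap_i H_i^{'{\cal{K}}}$ is still meaningful — if $\lambda(\bigcap_i H_i)=0$ then its accumulation-point set by $Avg^1$ is empty (any null set $L$ leaves the mean unchanged), so the inclusion is trivial, and otherwise Proposition \ref{phpke} applies directly. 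I expect no genuine obstacle here; the main (and only) care needed is to phrase the intersection case so that it also covers the degenerate situation where the intersection is not in the domain of the mean.
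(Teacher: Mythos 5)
Your proposal is correct and follows essentially the same route as the paper: both reduce everything to the characterization $H^{'{\cal{K}}}=\{x:\forall\delta>0\ \lambda(H\cap S(x,\delta))>0\}$ from Proposition \ref{phpke}, use monotonicity of that operator for the intersection inclusion and the easy union inclusion, and prove the nontrivial inclusion $(H_1\cup H_2)^{'{\cal{K}}}\subset H_1^{'{\cal{K}}}\cup H_2^{'{\cal{K}}}$ by the same elementary measure observation (your contrapositive with $\delta=\min\{\delta_1,\delta_2\}$ is just the direct form of the paper's argument with $\epsilon=\tfrac1n$ and infinitely many $n$). Your extra remark about the degenerate case where $\bigcap_i H_i\notin Dom({\cal{K}})$ is a reasonable bookkeeping point that the paper simply leaves implicit.
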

\begin{proof}First let us note that if $A\subset B$ then $A^{'{\cal{K}}}\subset B^{'{\cal{K}}}$. This gives the second statement and $H_1^{'{\cal{K}}}\cup H_2^{'{\cal{K}}}\subset (H_1\cup H_2)^{'{\cal{K}}}$. For the second observe that $\lambda(S(x,\epsilon)\cap(H_1\cup H_2))>0$ implies that either $\lambda(S(x,\epsilon)\cap H_1)>0$  or $\lambda(S(x,\epsilon)\cap H_2)>0$ holds. Now if we take $\epsilon=\frac{1}{n}$ for all $n\in\mathbb{N}$ then there is $i\in\{1,2\}$ such that $\lambda(S(x,\epsilon)\cap H_i)>0$ holds for infinitely many $n$. Hence $x\in H_i^{'{\cal{K}}}$.
\end{proof}

\begin{cor}The $Avg^1$-closed sets constitute a topology.\qed
\end{cor}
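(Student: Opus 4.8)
The plan is to verify that the collection $\mathcal C$ of all $Avg^1$-closed subsets of $\mathbb{R}$ is precisely the family of closed sets of a topology on $\mathbb{R}$ (equivalently, that the complements of its members form a topology). Thus I must check three things: $\emptyset,\mathbb{R}\in\mathcal C$; $\mathcal C$ is stable under arbitrary intersections; and $\mathcal C$ is stable under finite unions. Throughout I read $H^{'{\cal{K}}}$ (with ${\cal{K}}=Avg^1$) through the description of Proposition \ref{phpke}, namely $H^{'{\cal{K}}}=\{x:\forall\delta>0\ \lambda(H\cap S(x,\delta))>0\}$, taken verbatim for an arbitrary $H\subset\mathbb{R}$ (using outer measure where $H\cap S(x,\delta)$ need not be measurable). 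In particular this gives $H^{'{\cal{K}}}=\emptyset$ as soon as $\lambda^*(H)=0$, so sets that leave $Dom(Avg^1)$ under a Boolean operation --- null sets, and unbounded sets such as $\mathbb{R}$ itself --- create no difficulty.

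The first axiom is immediate: $\emptyset^{'{\cal{K}}}=\emptyset\subset\emptyset$ and $\mathbb{R}^{'{\cal{K}}}=\mathbb{R}\subset\mathbb{R}$. For intersections, let $H_i\in\mathcal C$ $(i\in I)$; the second inclusion of the preceding proposition --- which, read through the characterization, is just monotonicity of outer measure, $\lambda^*\big((\bigcap_{j}H_j)\cap S(x,\delta)\big)\le\lambda^*\big(H_i\cap S(x,\delta)\big)$, so it survives without any domain hypothesis --- yields $\big(\bigcap_{i\in I}H_i\big)^{'{\cal{K}}}\subset\bigcap_{i\in I}H_i^{'{\cal{K}}}\subset\bigcap_{i\in I}H_i$, whence $\bigcap_{i\in I}H_i\in\mathcal C$. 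For unions it suffices by induction to treat two sets $H_1,H_2\in\mathcal C$; the identity of the preceding proposition, which reduces to the elementary fact that $\lambda^*(A\cup B)>0$ holds exactly when $\lambda^*(A)>0$ or $\lambda^*(B)>0$ (subadditivity and monotonicity of outer measure) and hence again needs no domain hypothesis, gives $(H_1\cup H_2)^{'{\cal{K}}}=H_1^{'{\cal{K}}}\cup H_2^{'{\cal{K}}}\subset H_1\cup H_2$, so $H_1\cup H_2\in\mathcal C$. This verifies the three axioms.

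Virtually all the mathematical substance has already been discharged by the preceding proposition, so there is no real obstacle; the one point that repays a little care is the handling of sets falling outside $Dom(Avg^1)$ --- a possibly null or unbounded intersection, and the whole space $\mathbb{R}$ --- which is exactly why I insist on reading $H^{'{\cal{K}}}$ uniformly through the Proposition \ref{phpke} formula (with outer measure) rather than through the original $Avg^1$-based definition. With that convention fixed, the closed-set axioms follow as above.
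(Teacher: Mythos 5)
Your proof is correct and takes essentially the route the paper intends: the stability under finite unions and arbitrary intersections is exactly the content of the preceding proposition, and the corollary is meant to follow from it immediately. Your additional care in reading $H^{'Avg^1}$ through the characterization of Proposition \ref{phpke} (with outer measure) so as to cover $\emptyset$, $\mathbb{R}$ and possibly null or nonmeasurable intersections lying outside $Dom(Avg^1)$ is a reasonable way of fixing a domain issue the paper silently glosses over, not a different argument.
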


\begin{ex}The ${\cal{M}}^{acc}$-closed sets do not constitute a topology.
\end{ex}
\begin{proof}Let $H_1=\{\frac{1}{n},1+\frac{1}{n},2+\frac{1}{n}:n\in\mathbb{N}\}\cup\{0,2\},\ H_2=\{3+\frac{1}{n}:n\in\mathbb{N}\}\cup\{3\}$. Then $H_1^{'{\cal{K}}}=\{0,2\},\ H_2^{'{\cal{K}}}=\{3\}$ hence both sets are ${\cal{K}}$-closed for ${\cal{K}}={\cal{M}}^{acc}$. However $H_1\cup H_2$ is not closed since $1\in (H_1\cup H_2)^{'{\cal{K}}}$ but $1\notin H_1\cup H_2$.
\end{proof}

\subsection{Derivative of means}

We can define two derivative type notions for means. The first one measures that how symmetric the set is around $x$ in the sense of the mean.

\begin{df}Let ${\cal{K}}$ be a mean, $H\in Dom\ {\cal{K}}$. Let $x\in\mathbb{R}$ such that $\forall\delta>0\ S(x,\delta)\cap H\in Dom\ {\cal{K}}$ holds. Then set 
\[\overline{d}{\cal{K}}_H(x)=\varlimsup\limits_{\delta\to 0+0}\frac{{\cal{K}}(S(x,\delta)\cap H)-x}{\delta},\ \underline{d}{\cal{K}}_H(x)=\varliminf\limits_{\delta\to 0+0}\frac{{\cal{K}}(S(x,\delta)\cap H)-x}{\delta}\]
and if they are equal then let $d{\cal{K}}_H(x)$ be the common value.
\end{df}

Clearly $-1\leq\overline{d}{\cal{K}}_H(x),\ \underline{d}{\cal{K}}_H(x)\leq 1$. If e.g. $d{\cal{K}}_H(x)=0$ then we can interpret this as $H$ is symmetric in limit around $x$ in the sense of ${\cal{K}}$, while if $d{\cal{K}}_H(x)$ is close to $1$ then it means that $H$ is concentrated mainly on the right hand side of $x$.

We remark that we could have formulated the definition for only the points of $H^{'{\cal{K}}}$ however it would have been too restrictive as e.g. the example of ${\cal{M}}^{iso}$ will show.

\begin{prp}If $H$ is an interval then 
\[dAvg^1_H(x)=
\begin{cases}
\frac{1}{2}&\text{if }x=\inf H\\
-\frac{1}{2}&\text{if }x=\sup H\\
0&\text{if }x\in int(H)
\end{cases}\]
If $H\in Dom(Avg^1)$ arbitrary then $-\frac{1}{2}\leq\overline{d}Avg^1_H(x),\ \underline{d}Avg^1_H(x)\leq \frac{1}{2}$. 
\end{prp}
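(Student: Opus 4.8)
The plan is to prove both parts by reducing everything to the extremal behaviour of $Avg^1$ on a fixed interval, for which Lemma \ref{lavgmin} gives exact bounds. For the first part, I would fix an interval $H$ and a point $x$. The key observation is that for small $\delta$, the set $S(x,\delta)\cap H$ is again an interval: if $x\in int(H)$ it equals $(x-\delta,x+\delta)$, whose $Avg^1$ is exactly $x$, so the difference quotient is identically $0$ and hence $dAvg^1_H(x)=0$. If $x=\inf H$, then for small $\delta$ we have $S(x,\delta)\cap H=[x,x+\delta)$ (up to the endpoint, which has measure zero and does not affect $Avg^1$), whose average is $x+\frac{\delta}{2}$; so the difference quotient is constantly $\frac12$, giving $dAvg^1_H(x)=\frac12$. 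The case $x=\sup H$ is symmetric and yields $-\frac12$. This part is essentially a direct computation once one notes the measure-zero endpoint issue is harmless.

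For the second part, the claim is that for an arbitrary $H\in Dom(Avg^1)$ and any admissible $x$, the difference quotients stay in $[-\frac12,\frac12]$. The plan is to apply Lemma \ref{lavgmin} with $a=x-\delta$, $b=x+\delta$, and $h=\lambda(S(x,\delta)\cap H)$. That lemma tells us that $Avg^1(S(x,\delta)\cap H)$ lies between $(x-\delta)+\frac{h}{2}$ and $(x+\delta)-\frac{h}{2}$. Subtracting $x$ and dividing by $\delta$, the difference quotient lies between $-1+\frac{h}{2\delta}$ and $1-\frac{h}{2\delta}$. Since $h=\lambda(S(x,\delta)\cap H)\le\lambda(S(x,\delta))=2\delta$, we have $\frac{h}{2\delta}\le 1$; but that only gives the trivial bound $[-1,1]$ from the statement's preamble. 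To get $[-\tfrac12,\tfrac12]$ I need the sharper fact that $x$ is a point where $S(x,\delta)\cap H\in Dom\ Avg^1$ for all $\delta$, i.e. $\lambda(S(x,\delta)\cap H)>0$ for all $\delta$; more to the point, I should split $S(x,\delta)\cap H$ into its left part $L_\delta=H\cap(x-\delta,x)$ and right part $R_\delta=H\cap(x,x+\delta)$ and apply the lemma to each piece separately on $[x-\delta,x]$ and $[x,x+\delta]$.

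Carrying that out: write $\ell=\lambda(L_\delta)$, $r=\lambda(R_\delta)$. By Lemma \ref{lavgmin}, $Avg^1(L_\delta)\le x-\frac{\ell}{2}\le x$ and $Avg^1(R_\delta)\ge x+\frac{r}{2}\ge x$, and more precisely $Avg^1(L_\delta)\ge (x-\delta)+\frac{\ell}{2}$, $Avg^1(R_\delta)\le (x+\delta)-\frac{r}{2}$. Then $Avg^1(S(x,\delta)\cap H)$ is the weighted average $\frac{\ell\, Avg^1(L_\delta)+r\, Avg^1(R_\delta)}{\ell+r}$, so
\[
Avg^1(S(x,\delta)\cap H)-x\le \frac{\ell\cdot 0 + r\left(\delta-\frac{r}{2}\right)}{\ell+r}\le \frac{r\delta}{\ell+r}\le \delta\cdot\frac{r}{r}=\delta,
\]
which again is only the trivial bound. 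The genuinely sharp estimate comes from also using $Avg^1(R_\delta)\le x+\delta-\frac r2$ \emph{together with} the fact that when $r$ is close to its maximum $\delta$ the average $Avg^1(R_\delta)$ is forced toward the midpoint $x+\frac\delta2$: indeed $Avg^1(R_\delta)-x\le \delta-\frac r2$, and this is maximised over $r\in(0,\delta]$... no — it is maximised as $r\to0$, giving the supremum $\delta$, not $\frac\delta2$. So the bound $\tfrac12$ is \emph{not} uniform over all admissible $x$ at a fixed $\delta$; it must be understood as a bound on the $\varlimsup$ and $\varliminf$ as $\delta\to0+$.

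So the honest structure is: the difference quotient can exceed $\frac12$ for individual $\delta$, but I claim $\varlimsup_{\delta\to0+}\frac{Avg^1(S(x,\delta)\cap H)-x}{\delta}\le\frac12$. The argument: if along some sequence $\delta_n\to0$ the quotient exceeds $\frac12+\eta$, then $Avg^1(R_{\delta_n})$ contributes at least $(\frac12+\eta)\delta_n$ to the weighted average, which by the weighting forces $\frac{r_n}{\ell_n+r_n}\to1$, hence $\ell_n/\delta_n\to0$; but then $\lambda(S(x,\delta_n)\cap H)=r_n+\ell_n$ with $r_n\le\delta_n$, and the total set $S(x,\delta_n)\cap H\subset[x-\delta_n,x+\delta_n]$ has measure $\le\delta_n+o(\delta_n)$, so Lemma \ref{lavgmin} applied on $[x-\delta_n,x+\delta_n]$ with $h_n=r_n+\ell_n\le\delta_n+o(\delta_n)$ gives $Avg^1(S(x,\delta_n)\cap H)\le (x+\delta_n)-\frac{h_n}{2}$; dividing by $\delta_n$ the quotient is $\le 1-\frac{h_n}{2\delta_n}$, and since $\ell_n/\delta_n\to0$ we'd need $r_n/\delta_n\to1$ for the quotient to approach the claimed value, giving $\le 1-\frac12=\frac12$ in the limit, contradicting $>\frac12+\eta$. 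The symmetric argument bounds the $\varliminf$ below by $-\frac12$. The main obstacle is precisely pinning down this limiting argument — making rigorous that pushing the difference quotient above $\frac12$ forces the right-hand mass fraction to $1$, which in turn (via Lemma \ref{lavgmin}) caps the quotient at $\frac12$ — so the bound emerges only in the limit, not pointwise in $\delta$.
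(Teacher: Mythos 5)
Your treatment of the interval case is fine: for small $\delta$ the set $S(x,\delta)\cap H$ is a full or half interval (up to a measure-zero endpoint), the difference quotient is constantly $0$, $\tfrac12$ or $-\tfrac12$, and the claimed values follow; this is the natural direct computation.

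The second half, however, does not close, and the decisive step of your limiting argument is exactly where it breaks. From ``the quotient exceeds $\tfrac12+\eta$ along $\delta_n$'' you infer $\tfrac{r_n}{\ell_n+r_n}\to 1$, hence $\ell_n/\delta_n\to 0$, and then that Lemma \ref{lavgmin} caps the quotient by $1-\tfrac{h_n}{2\delta_n}\approx\tfrac12$. But $1-\tfrac{h_n}{2\delta_n}$ is close to $\tfrac12$ only if $h_n/\delta_n\to 1$, and nothing in your argument forces the measure $h_n=\lambda(S(x,\delta_n)\cap H)$ to be comparable to $\delta_n$: a quotient above $\tfrac12+\eta$ only forces $\tfrac{r_n}{\ell_n+r_n}>\tfrac12+\eta$, not $\to1$, and Lemma \ref{lavgmin} pushes the average toward the right \emph{endpoint} $x+\delta_n$ (not toward $x+\tfrac{\delta_n}{2}$) precisely when $h_n$ is small. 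Concretely, take $H=\bigcup_{n\ge1}\left[2^{-n}-4^{-n},\,2^{-n}\right]$ and $x=0$; then for $\delta=2^{-n}$ one computes
\[\frac{Avg^1(S(0,\delta)\cap H)}{\delta}=\frac67-\frac25\,2^{-n}\longrightarrow\frac67,\]
so $\overline{d}Avg^1_H(0)\ge\tfrac67>\tfrac12$. Thus the inequality you set out to prove (both derivates in $[-\tfrac12,\tfrac12]$ for arbitrary $H\in Dom(Avg^1)$) is false, and no repair of the limiting argument can succeed.

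Note also that reading the displayed inequality as two separate statements ($-\tfrac12\le\overline{d}Avg^1_H(x)$ and $\underline{d}Avg^1_H(x)\le\tfrac12$) does not save it: taking blocks $\left[\theta^{n}-\theta^{3n},\theta^{n}\right]$ with $\theta<1$ close to $1$, the cumulative measure of $H$ in $(0,\delta)$ behaves like $c\,\delta^3$ at every small scale, and the quotient stays near $\tfrac{\theta(1+\theta+\theta^2)}{1+\theta+\theta^2+\theta^3}\approx\tfrac34$ for \emph{all} small $\delta$, so even $\underline{d}Avg^1_H(0)>\tfrac12$. (The paper states this proposition without proof.) So the second sentence can only hold under additional hypotheses — e.g.\ $H$ a finite union of intervals, or $x$ a one-sided density point of $H$ — and any proof attempt must start from such a restriction rather than from Lemma \ref{lavgmin} alone.
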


\begin{prp}Let ${\cal{K}}={\cal{M}}^{acc},\ H\in Dom\ {\cal{K}},\ x\in cl(H)$. Then $d{\cal{K}}_H(x)=0$.
\end{prp}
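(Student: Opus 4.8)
The plan is to prove the sharper statement that for all sufficiently small $\delta>0$ one in fact has ${\cal{M}}^{acc}(S(x,\delta)\cap H)=x$, which makes the difference quotient $\frac{{\cal{K}}(S(x,\delta)\cap H)-x}{\delta}$ vanish identically near $\delta=0$ and hence forces $\overline{d}{\cal{K}}_H(x)=\underline{d}{\cal{K}}_H(x)=0$. Set $m=\max\{k:x\in H^{(k)}\}$; this is well defined since $x\in cl(H)$ puts $x$ in $H^{(0)}$ or in $H^{(1)}$, while $H^{(lev(H)+1)}=\emptyset$ bounds $m$ above. As $x\notin H^{(m+1)}=(H^{(m)})'$ (with the convention $H^{(0)}=H$, so for $m=0$ this just says $x$ is isolated in $H$), choose $\delta_0>0$ with $H^{(m)}\cap S(x,\delta_0)=\{x\}$.

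The technical core is the localization identity: for every open $U\subseteq\mathbb{R}$, every $A\subseteq\mathbb{R}$ and every $k\in\mathbb{N}$, $(A\cap U)^{(k)}\cap U=A^{(k)}\cap U$. This follows by induction on $k$ from the elementary fact that $B'\cap U=(B\cap U)'\cap U$ whenever $U$ is open, because a point of the open set $U$ is a limit point of $B$ iff it is a limit point of $B\cap U$. Applying it with $A=H$ and $U=S(x,\delta)$ for $0<\delta<\delta_0$ gives $(S(x,\delta)\cap H)^{(m)}\cap S(x,\delta)=H^{(m)}\cap S(x,\delta)=\{x\}$.

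It then remains to identify $S(x,\delta)\cap H$ as a set of level exactly $m$ whose $m$-th derived set is $\{x\}$. Since $(S(x,\delta)\cap H)^{(m)}\subseteq cl(S(x,\delta)\cap H)\subseteq[x-\delta,x+\delta]$, the previous line leaves only the two endpoints as possible extra members. I would rule out $x+\delta$ as follows: if $x+\delta\in(S(x,\delta)\cap H)^{(m)}$ then $m\geq1$ and $x+\delta$ is a limit point of $(S(x,\delta)\cap H)^{(m-1)}$; since $S(x,\delta)\cap H$ lies strictly left of $x+\delta$, the approximating points lie in $S(x,\delta)$, hence in $H^{(m-1)}$ by the localization identity, so $x+\delta\in(H^{(m-1)})'=H^{(m)}$ --- impossible as $x+\delta\in S(x,\delta_0)\setminus\{x\}$. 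The case $x-\delta$ is symmetric, and for $m=0$ the set $S(x,\delta)\cap H$ is just $\{x\}$. Thus $(S(x,\delta)\cap H)^{(m)}=\{x\}$. For $m\geq1$ one also notes $(S(x,\delta)\cap H)^{(m-1)}\supseteq H^{(m-1)}\cap S(x,\delta)$, which is infinite because $x\in(H^{(m-1)})'$, so the level is exactly $m$. Hence ${\cal{M}}^{acc}(S(x,\delta)\cap H)=\A(\{x\})=x$, completing the proof.

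The main obstacle is precisely the endpoint bookkeeping: the localization identity only controls derived sets inside the open window $S(x,\delta)$, so one must argue separately that $H$ has no sufficiently deep accumulation at the moving boundary $x\pm\delta$ --- and this is exactly where the hypothesis $x\notin H^{(m+1)}$, encoded in the choice of $\delta_0$, does the work. A smaller point is checking that truncation does not lower the level below $m$, which is handled by the accumulation of $H^{(m-1)}$ at $x$.
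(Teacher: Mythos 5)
Your proof is correct and follows essentially the same route as the paper's: take the maximal $m$ with $x\in H^{(m)}$, shrink to a ball $S(x,\delta_0)$ meeting $H^{(m)}$ only in $\{x\}$, and conclude ${\cal{M}}^{acc}(S(x,\delta)\cap H)=x$ for all small $\delta$, so the difference quotient vanishes. The only difference is that you spell out the localization identity and the endpoint/level bookkeeping that the paper's proof leaves implicit (and the endpoint step could even be shortened by noting that for $\delta<\delta_0$ one has $(S(x,\delta)\cap H)^{(m)}\subset H^{(m)}\cap[x-\delta,x+\delta]\subset H^{(m)}\cap S(x,\delta_0)=\{x\}$ by monotonicity of derived sets).
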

\begin{proof}Let $n=lev(H)$. Then there is $k\in\mathbb{N}\cup\{0\}$ such that $0\leq k\leq n$ and $x\in H^{(k)}-H^{(k+1)}$. It implies that there is $\delta_0>0$ such that $S(x,\delta_0)\cap H^{(k)}=\{x\}$. Hence if $l>k$ then $S(x,\delta_0)\cap H^{(l)}=\emptyset$ which gives that $lev\big(S(x,\delta_0)\cap H\big)=k$. Clearly if $\delta<\delta_0$ then all previous statements hold as well. Then ${\cal{K}}(S(x,\delta)\cap H)=x$ yields that $d{\cal{K}}_H(x)=0$.
\end{proof}

\begin{ex}Let ${\cal{K}}={\cal{M}}^{iso}$. Then there is $H\in Dom\ {\cal{K}},\ x\in cl(H)$ such that $\overline{d}{\cal{K}}_H(x)=1$.
\end{ex}
\begin{proof}Let $f_1(z)=2^k,\ f_{n+1}(z)=2^{f_n(z)}\ (n\in\mathbb{N},z\in\mathbb{R}^+)$. Let $H_n=\{\frac{1}{n}+\frac{1}{f_n(k)}:k\in\mathbb{N}\}\ (n\in\mathbb{N})$. Finally set $H=\cup_{n=1}^{\infty}H_n$. We show that $\overline{d}{\cal{K}}_H(0)=1$.

Let $g_1(k)=\log_2 k,\ g_{n+1}(k)=\log_2(f_n(k))\ (n,k\in\mathbb{N})$.

Clearly $H_n'=\{\frac{1}{n}\},\ H'=\{\frac{1}{n}:n\in\mathbb{N}\}\cup\{0\}$. First we show that ${\cal{K}}(H)=1$. Let us estimate $\A(H-S(H',\frac{1}{k}))$. Evidently $H-S(H',\frac{1}{k})=\cup_{i=1}^k\{y\in H_i:y-\frac{1}{i}\geq\frac{1}{k}\}$. Let $L_k=\bigcup\limits_{i=2}^k\{y\in H_i:y-\frac{1}{i}\geq\frac{1}{k}\},\ K_k=\{y\in H_1:y-1\geq\frac{1}{k}\}$. Then $|K_k|=[\log_2k],\ |L_k|\leq\sum\limits_{i=2}^kg_i(k)$. Obviously $\A(K_k)\to 1$. If we showed that $\frac{|L_k|}{|K_k|}\to 0$ then we could apply \cite{lamis} 3.2 which would give $\overline{d}{\cal{K}}_H(0)=1$. For given $k\ \exists !m\in\mathbb{N},\exists !v\in\mathbb{R}$ such that $1\leq v<2$ and $k=f_m(v)$. Now one can easily show by induction that if $k\geq f_5(1)$ then $m<\log_2\log_2 k$. But in the sum $\sum\limits_{i=2}^kg_i(k)$ the number of terms is not $k-1$, it is only $m$. Hence $\sum\limits_{i=2}^kg_i(k)<(\log_2\log_2k)^2$ because the maximum term is $\log_2\log_2k$ and there are at most $\log_2\log_2k$ terms. Therefore $\frac{|L_k|}{|K_k|}<\frac{(\log_2\log_2k)^2}{\log_2k-1}\to 0$.

Let us now show that $\overline{d}{\cal{K}}_H(0)=1$. Let $\delta=\frac{1}{n}+\frac{1}{n^2}$. Because $S(x,\delta)\cap H$ has the same structure than $H$ in exactly the same way as before one can show that ${\cal{K}}(S(x,\delta)\cap H)=\frac{1}{n}$. Hence $\lim\limits_{n\to\infty}\frac{{\cal{K}}(S(x,\delta)\cap H)-x}{\delta}\to 1$.
\end{proof}

\begin{rem}Using the same notation one can readily see that $H^{'{\cal{K}}}=\{1\}$ and $d{\cal{K}}_H(1)=0$.\qed
\end{rem}

\medskip
Now we define the second type derivative notion. Throughout this subsection $d(H,K)$ will denote the Hausdorff distance between two compact sets $H,K\subset\mathbb{R}$.

\begin{df}Let ${\cal{K}}$ be a mean defined on some compact sets too. Let $H\in Dom({\cal{K}})$ be compact. Then set
\[\underline{D}{\cal{K}}(H)=\varliminf\limits_{\epsilon\to0+0}\Big\{\frac{{\cal{K}}(K)-{\cal{K}}(H)}{d(H,K)}:K\in Dom({\cal{K}}),\ K\text{ is compact},\ d(H,K)\leq\epsilon\Big\},\]
\[\overline{D}{\cal{K}}(H)=\varlimsup\limits_{\epsilon\to0+0}\Big\{\frac{{\cal{K}}(H)-{\cal{K}}(K)}{d(H,K)}:K\in Dom({\cal{K}}),\ K\text{ is compact},\ d(H,K)\leq\epsilon\Big\},\]
where $\underline{D}{\cal{K}}(H),\ \overline{D}{\cal{K}}(H)$ are the lower and upper derivative of ${\cal{K}}$ at $H$ respectively.
If they are equal then the common value is denoted by $D{\cal{K}}(H)$ and called the derivative of ${\cal{K}}$ at $H$.
\end{df}

We investigate $Avg^1$ in detail.

\begin{lem}\label{lsuphgeqaplp2}Let $H\in Dom(Avg^1)$. Set $m=\sup H,\ a=Avg^1(H),\ l=\lambda(H)$. Then $m\geq a+\frac{l}{2}$. Moreover equality holds if and only if $H=L\cup[a-\frac{l}{2},a+\frac{l}{2}]$ where $L\subset H\cap(-\infty,a-\frac{l}{2})$ and $\lambda(L)=0$.
\end{lem}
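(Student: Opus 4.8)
The inequality $m \geq a + \tfrac{l}{2}$ is exactly the $\max$ half of Lemma \ref{lavgmin} applied on an interval containing $H$. More precisely, let me set $a' = \inf H$ (or just take any $[a',m] \supseteq H$ with $m - a' > l$, unless $H$ is already an interval $[m-l,m]$). Then $H \subseteq [a',m]$ is Lebesgue measurable with $\lambda(H) = l$, so Lemma \ref{lavgmin} gives $Avg^1(H) \leq m - \tfrac{l}{2}$, i.e. $a \leq m - \tfrac{l}{2}$, which rearranges to $m \geq a + \tfrac{l}{2}$. (The degenerate case where no such strictly larger interval exists means $H$ differs from $[m-l,m]$ by a null set, where the claim is immediate.)

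For the equality characterization: the "if" direction is a one-line computation — if $H = L \cup [a - \tfrac{l}{2}, a + \tfrac{l}{2}]$ with $\lambda(L) = 0$ and $L \subseteq (-\infty, a - \tfrac{l}{2})$, then $\lambda(H) = l$ and the null set $L$ does not affect the integral, so $Avg^1(H) = Avg^1([a-\tfrac{l}{2}, a+\tfrac{l}{2}]) = a$ and $\sup H = a + \tfrac{l}{2}$, giving $m = a + \tfrac{l}{2}$. The substance is the "only if" direction. Assuming $m = a + \tfrac{l}{2}$, I want to show $H$ agrees with $[m-l, m] = [a - \tfrac{l}{2}, a + \tfrac{l}{2}]$ up to a null subset of $(-\infty, a - \tfrac{l}{2})$.

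The cleanest way is to revisit the proof of Lemma \ref{lavgmin} and track when its key inequality is tight. There the step functions satisfied $a + \tfrac{k}{n}h \leq x_k$ termwise, where $x_k$ was chosen so that $\lambda(H \cap [x_k, x_{k+1}]) = \tfrac{h}{n}$; equality of the integrals $\int_H x\, d\lambda = \int_{[a,a+h]} x\, d\lambda$ forces, in the limit, that $H$ must be "pushed as far left as possible". Concretely, I would argue as follows. Since $Avg^1(H) = a$ and $\sup H = m = a + \tfrac l2$, consider $t = a - \tfrac l2$ and compare $\lambda(H \cap (-\infty, t))$ against the "mass budget." If $\lambda(H \cap (-\infty, t)) > 0$, then some positive-measure part of $H$ lies strictly left of $t$, while $\lambda(H \cap (t, m)) \le m - t = l$; combined with $\lambda(H) = l$ this forces $\lambda\big((t,m) \setminus H\big) > 0$, so a positive-measure chunk of $(t,m)$ is missing from $H$. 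Now the rearrangement that moves the left-of-$t$ mass rightward into that gap strictly decreases $\int_H x\, d\lambda$, hence strictly decreases $Avg^1$ while keeping $\lambda$ and not increasing $\sup H$ — contradicting that $a$ is already the minimum value for measure-$l$ subsets of $(-\infty, m]$ (Lemma \ref{lavgmin} with $b = m$). Therefore $\lambda(H \cap (-\infty,t)) = 0$; call that null set $L$. Then $H \setminus L \subseteq [t, m]$ has full measure $l = m - t$, so $H \setminus L = [t,m]$ up to a null set, and absorbing that further null set into $L$ (it sits in $[t,m]$, but we may enlarge the interval part to all of $[t,m]$ since measure-zero modifications don't change $H$ as an element of the domain... ) — here I should be slightly careful: the statement wants $H = L \cup [t,m]$ on the nose with $L \subseteq (-\infty, t)$, so I interpret equality of sets modulo the usual convention in the paper, or I note $H \supseteq$ a full-measure subset of $[t,m]$ and $H \setminus [t,m]$ is null.

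The main obstacle is making the "rearrangement strictly decreases the integral" argument rigorous without just re-deriving Lemma \ref{lavgmin}; the neat fix is to phrase it as: if $\lambda(H \cap (-\infty,t)) = c > 0$ then $H' := (H \cap [t,m]) \cup [t', t]$ for a suitable $t' < t$ with $\lambda([t',t] \setminus H) $ accounting for the deficit has $\lambda(H') = l$, $\sup H' \le m$, and $Avg^1(H') < Avg^1(H) = a$ because we replaced mass at points $< t$ by... no — this needs the gap to be filled on the *right*. Let me instead directly use the minimality: among measurable subsets of $(-\infty, m]$ of measure $l$, Lemma \ref{lavgmin} says the minimum of $Avg^1$ is $m - \tfrac l2 = a$, attained (by the "if" direction, now applied with the roles mirrored) exactly at sets of the form $[m-l,m]$ plus a null set — and I'd close the loop by showing any attaining set must equal $[m-l,m]$ up to null measure via the termwise-equality analysis of the step functions $f_n, g_n$ in Lemma \ref{lavgmin}'s proof. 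I expect this bookkeeping to be the only delicate point; everything else is routine.
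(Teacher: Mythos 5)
Your inequality half is correct, and in fact more direct than the paper's argument: one application of the $\max$ half of Lemma \ref{lavgmin} to $H\subset[c,m]$ (any $c<\inf H$ works, so your ``degenerate case'' never really arises), whereas the paper splits $H$ at $a$ and invokes strict internality; the ``if'' half of the equality case is also fine. The genuine gap is in the ``only if'' half. First, your extremal bookkeeping is consistently turned around: over measure-$l$ subsets of $[c,m]$ the value $a=m-\tfrac l2$ is the \emph{maximum} of $Avg^1$, not the minimum (over $(-\infty,m]$ there is no minimum at all), and moving mass from the left of $t=a-\tfrac l2$ into a gap of $(t,m)$ \emph{increases} $\int_H x\,d\lambda$; the contradiction must be that the rearranged set would exceed the maximum $m-\tfrac l2$, not drop below a minimum. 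Second, and more seriously, the statement you ultimately lean on --- that the extremum in Lemma \ref{lavgmin} is attained only by $[b-h,b]$ (resp.\ $[a,a+h]$) up to a null set --- is not part of Lemma \ref{lavgmin}; it is precisely the content to be proved, and your deferred ``termwise-equality analysis of $f_n,g_n$'' does not go through as described: equality of the two limiting integrals gives no comparison between $\int f_n$ and $\int g_n$ for fixed $n$, so a separate quantitative argument is needed rather than bookkeeping.

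A concrete repair, in the same spirit as the paper (which splits $H$ at $a$ and at $a-\tfrac l2$ and redoes the Lemma \ref{lavgmin} estimates): assume $m=a+\tfrac l2$, set $t=m-l$ and $c=\lambda\bigl(H\cap(-\infty,t)\bigr)$, and suppose $c>0$. Applying the $\max$ half of Lemma \ref{lavgmin} to $H\cap(-\infty,t)$ and to $H\cap[t,m]$ separately gives
\[
a=Avg^1(H)\le\frac{c\bigl(t-\tfrac c2\bigr)+(l-c)\bigl(m-\tfrac{l-c}{2}\bigr)}{l}=m-\frac l2-\frac{c^2}{l}<a,
\]
a contradiction (when $c=l$ the first estimate alone gives $a\le t-\tfrac l2<a$). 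Hence $\lambda\bigl(H\cap(-\infty,t)\bigr)=0$, and since then $\lambda\bigl(H\cap[t,m]\bigr)=l=m-t$, the interval $[t,m]$ differs from $H$ only by a null set. Your worry about the conclusion holding ``on the nose'' versus up to null sets is legitimate but is not the gap: no proof can give more than the up-to-null-sets form (e.g.\ $[0,1]\setminus\{\tfrac12\}$ attains equality), and the paper's own proof likewise only establishes $\lambda\bigl(H\cap(-\infty,a-\tfrac l2)\bigr)=0$ and that $[a-\tfrac l2,a+\tfrac l2]\setminus H$ is null.
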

\begin{proof}Assume the contrary: $m<a+\frac{l}{2}$. Let $l_1=\lambda(H^{a-}),a_1=Avg^1(H^{a-}),l_2=\lambda(H^{a+}),a_2=Avg^1(H^{a+}).$ Clearly $l=l_1+l_2$. By \ref{cavg1ssii} $l_1,l_2>0$.
Then by \ref{lavgmin}
\[a=\frac{l_1a_1+l_2a_2}{l_1+l_2}\leq\frac{l_1(a-\frac{l_1}{2})+l_2(m-\frac{l_2}{2})}{l}<\frac{l_1(a-\frac{l_1}{2})+l_2(a+\frac{l}{2}-\frac{l_2}{2})}{l}=\]
\[=\frac{l_1(a-\frac{l_1}{2})+l_2(a+\frac{l_1}{2})}{l}=a+\frac{1}{l}\frac{l_1}{2}(l_2-l_1)\]
but by indirect assumption $l_1<l_2$ hence $\frac{1}{l}\frac{l_1}{2}(l_2-l_1)>0$ which is a contradiction.

If $H$ is of the given form then clearly equality holds. If $m=a+\frac{l}{2}$ then suppose that $l_2<\frac{l}{2}<l_1$. Then in almost the same way we get that
\[a=\frac{l_1a_1+l_2a_2}{l_1+l_2}\leq\frac{l_1(a-\frac{l_1}{2})+l_2(m-\frac{l_2}{2})}{l}=\frac{l_1(a-\frac{l_1}{2})+l_2(a+\frac{l}{2}-\frac{l_2}{2})}{l}=\]
\[=\frac{l_1(a-\frac{l_1}{2})+l_2(a+\frac{l_1}{2})}{l}=a+\frac{1}{l}\frac{l_1}{2}(l_2-l_1)<a\]
which is a contradiction hence $l_1=l_2=\frac{l}{2}$. Now suppose that $l_3=\lambda(H\cap(-\infty,a-\frac{l}{2}))>0$. Set $l_4=\lambda(H\cap[a-\frac{l}{2},a])$. Then $l_3+l_4=\frac{l}{2}$. Again the same way
\[a\leq\frac{l_3(a-\frac{l}{2}-\frac{l_3}{2})+l_4(a-\frac{l_4}{2})+\frac{l}{2}(a+\frac{l}{4})}{l}=\]
\[=a+\frac{\frac{l^2}{8}-\frac{l_4^2}{2}-\frac{l}{2}l_3-\frac{l_3^2}{2}}{l}=a+\frac{\frac{l^2}{8}-\frac{1}{2}(l_3+l_4)^2+l_3l_4-\frac{l}{2}l_3}{l}=a+\frac{l_3(l_4-\frac{l}{2})}{l}\]
hence in order not to get a contradiction we must have $l_4=\frac{l}{2}$ i.e. $l_3=0$.
\end{proof}

A similar result can be formulated for $\inf H$.

\begin{lem}\label{lsuphgeqaplp2}Let $H\in Dom(Avg^1)$. Set $m=\inf H,\ a=Avg^1(H),\ l=\lambda(H)$. Then $m\leq a-\frac{l}{2}$. Moreover equality holds if and only if $H=L\cup[a-\frac{l}{2},a+\frac{l}{2}]$ where $L\subset H\cap(a+\frac{l}{2},+\infty)$ and $\lambda(L)=0$.\qed
\end{lem}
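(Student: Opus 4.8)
The plan is to deduce this second lemma directly from the first one (the $\sup H$ version) by a reflection argument, since reflecting the real line sends $\inf$ to $\sup$ and preserves Lebesgue measure and the structure of the statement. Concretely, I would apply the operator $T_c$ for a suitable center $c$ — most cleanly $c=a=Avg^1(H)$, so that $T_a(H)$ again has $Avg^1$ equal to $a$. The key facts I need are: (i) $\lambda(T_a(H))=\lambda(H)=l$ since reflection is an isometry; (ii) $Avg^1(T_a(H))=T_a(Avg^1(H))=2a-a=a$, because $Avg^1$ commutes with affine maps (this is the reflection-invariance of the barycentre, which was used repeatedly in the earlier parts of the paper); and (iii) $\inf H = 2a - \sup T_a(H)$, i.e. $\sup T_a(H) = 2a - \inf H$.

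First I would set $H^* = T_a(H)$ and record the three facts above, noting $H^*\in Dom(Avg^1)$. Then applying the previous lemma to $H^*$ (with its sup being $m^* := \sup H^* = 2a - m$, its average $a$, its measure $l$) gives $m^* \ge a + \tfrac{l}{2}$, i.e. $2a - m \ge a + \tfrac{l}{2}$, which rearranges to $m \le a - \tfrac{l}{2}$ — the desired inequality. For the equality case, the previous lemma tells us that $m^* = a + \tfrac{l}{2}$ holds if and only if $H^* = L^* \cup [a-\tfrac{l}{2}, a+\tfrac{l}{2}]$ with $L^* \subset H^* \cap (-\infty, a-\tfrac{l}{2})$ and $\lambda(L^*)=0$. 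Applying $T_a$ to both sides and using $T_a([a-\tfrac{l}{2},a+\tfrac{l}{2}]) = [a-\tfrac{l}{2},a+\tfrac{l}{2}]$ (the interval is symmetric about $a$) and $T_a\big((-\infty,a-\tfrac{l}{2})\big) = (a+\tfrac{l}{2},+\infty)$ translates this to $H = L \cup [a-\tfrac{l}{2},a+\tfrac{l}{2}]$ with $L = T_a(L^*) \subset H \cap (a+\tfrac{l}{2},+\infty)$ and $\lambda(L)=\lambda(L^*)=0$, which is exactly the claimed characterization. Finally $m = a - \tfrac{l}{2} \iff m^* = a + \tfrac{l}{2}$ closes the equivalence.

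I do not expect a real obstacle here; the statement is flagged in the text as "similar" and the $\qed$ in the excerpt signals the author intends to omit the details. The only mild subtlety is being careful that $Avg^1$ genuinely commutes with the reflection $T_a$ on the relevant domain — but this is the elementary change-of-variables computation $\int_{T_a(H)} x\, d\lambda = \int_H (2a-x)\, d\lambda = 2a\lambda(H) - \int_H x\, d\lambda$, so that dividing by $\lambda(H)$ gives $Avg^1(T_a(H)) = 2a - Avg^1(H)$, and this is a standard property used elsewhere in the paper. One could alternatively just repeat the three-step estimate of the previous proof verbatim with inequalities reversed and $\sup$ replaced by $\inf$; the reflection argument is merely a shorter packaging of that, so I would present it that way.

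\begin{proof}
Apply the reflection $T_a$ where $a=Avg^1(H)$ and put $H^*=T_a(H)$. Since $T_a$ is an isometry, $\lambda(H^*)=l$, and since $Avg^1$ commutes with reflections, $Avg^1(H^*)=2a-a=a$; also $\sup H^*=2a-\inf H=2a-m$. By the previous lemma, $\sup H^*\geq a+\frac{l}{2}$, i.e. $2a-m\geq a+\frac{l}{2}$, which gives $m\leq a-\frac{l}{2}$. Equality $m=a-\frac{l}{2}$ holds iff $\sup H^*=a+\frac{l}{2}$, which by the previous lemma is equivalent to $H^*=L^*\cup[a-\frac{l}{2},a+\frac{l}{2}]$ with $L^*\subset H^*\cap(-\infty,a-\frac{l}{2})$ and $\lambda(L^*)=0$. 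Applying $T_a$ and using $T_a([a-\frac{l}{2},a+\frac{l}{2}])=[a-\frac{l}{2},a+\frac{l}{2}]$ and $T_a((-\infty,a-\frac{l}{2}))=(a+\frac{l}{2},+\infty)$, this is equivalent to $H=L\cup[a-\frac{l}{2},a+\frac{l}{2}]$ with $L=T_a(L^*)\subset H\cap(a+\frac{l}{2},+\infty)$ and $\lambda(L)=0$.
\end{proof}
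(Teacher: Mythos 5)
Your proof is correct. Note that the paper itself gives no argument here: the lemma is stated with a terminal \qed{} after the remark that ``a similar result can be formulated for $\inf H$'', i.e.\ the intended proof is to repeat the three-step estimate of the $\sup$-lemma verbatim with the inequalities reversed. Your route is different in packaging: instead of redoing the computation, you deduce the statement as a formal corollary of the $\sup$-lemma by conjugating with the reflection $T_a$, $a=Avg^1(H)$, using that reflection preserves Lebesgue measure, that $Avg^1(T_a(H))=2a-Avg^1(H)=a$ (the change-of-variables identity you state), and that $\sup T_a(H)=2a-\inf H$; the equality case then transports correctly because $[a-\frac{l}{2},a+\frac{l}{2}]$ is $T_a$-symmetric and $T_a$ maps $(-\infty,a-\frac{l}{2})$ onto $(a+\frac{l}{2},+\infty)$, with $T_a(L^*)$ still of measure zero. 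What your approach buys is that the inf-version becomes a one-line consequence of the sup-version with no new estimates, and it makes the underlying symmetry explicit (indeed $Avg^1$ is reflection-invariant, a property the paper itself invokes elsewhere); what the paper's implicit approach buys is that it needs no auxiliary facts about how $Avg^1$ behaves under affine maps, only the same elementary slicing argument already written out. Either is acceptable; your version is complete as written.
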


\begin{thm}Let ${\cal{K}}=Avg^1,\ H\in Dom({\cal{K}})$ be compact. Then $\overline{D}{\cal{K}}(H)\geq\frac{1}{2}$ and equality holds if and only if $H=L\cup[a-\frac{l}{2},a+\frac{l}{2}]$ where $L\subset H\cap(-\infty,a-\frac{l}{2})$ and $\lambda(L)=0$ where $a=Avg^1(H),\ l=\lambda(H)$.
\end{thm}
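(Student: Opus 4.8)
The plan is to establish the lower bound $\overline{D}{\cal K}(H)\ge\frac12$ by exhibiting a one-parameter family of competitors $K$ that realizes the ratio $\frac{{\cal K}(H)-{\cal K}(K)}{d(H,K)}$ arbitrarily close to (and in fact equal to) $\frac12$, and then to identify exactly when no competitor can do better. Write $a=Avg^1(H)$, $m=\sup H$, $l=\lambda(H)$. The natural competitors are obtained by chopping a thin slice off the right end of $H$: for small $\epsilon>0$ set $K_\epsilon = H\cap(-\infty,m-\epsilon]$. Then $d(H,K_\epsilon)=\epsilon$ (the removed part sits near $m$), and removing a slice of measure at most $\epsilon$ from the far right of $H$ decreases $Avg^1$. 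The key computation is that as $\epsilon\to0+0$, $\frac{a - Avg^1(K_\epsilon)}{\epsilon}\to$ something between $0$ and $\frac12$ in general — so this particular family is \emph{not} enough, and I must instead push points of $H$ outward rather than delete them.

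So the better family is: take $K_\epsilon = (H\cap(-\infty,b])\cup (H\cap(b,m] + \epsilon)$ for a suitable threshold $b$, i.e. translate the top portion of $H$ to the right by $\epsilon$; then $d(H,K_\epsilon)=\epsilon$ and $Avg^1$ increases by $\epsilon\cdot\frac{\lambda(H\cap(b,m])}{l}$, which we can make as close to $\epsilon\cdot 1$ as we like — but this goes the \emph{wrong} direction for $\overline D$ (we need ${\cal K}(H)-{\cal K}(K)>0$). The correct move is to translate the \emph{bottom} portion of $H$ further left: $K_\epsilon=(H\cap[\inf H, b) - \epsilon)\cup(H\cap[b,m])$. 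Then $d(H,K_\epsilon)=\epsilon$ and $Avg^1(K_\epsilon)=a-\epsilon\frac{\lambda(H^{b-})}{l}$, so the ratio is $\frac{\lambda(H^{b-})}{l}$, which can be pushed up to $1$. That shows $\overline D{\cal K}(H)$ is actually at least $1$ — hence certainly $\ge\frac12$, but this would contradict the claimed sharpness. The resolution is that the definition constrains $K$ to lie within Hausdorff distance $\epsilon$ \emph{and} the Hausdorff distance already forces the translated set to remain close, so one must be careful that $d(H,K_\epsilon)$ is genuinely $\epsilon$ and not smaller; translating a whole chunk by $\epsilon$ does give $d=\epsilon$, so the honest conclusion is that for the lower bound one should use the \emph{endpoint} family and Lemma~\ref{lsuphgeqaplp2}: the point is that a compact set at distance $\le\epsilon$ from $H$ has supremum at most $m+\epsilon$ and Lebesgue measure that cannot exceed $l+2\epsilon\cdot(\text{something})$, so the constraint $Avg^1(K)\le \sup K - \tfrac{\lambda(K)}2$ from Lemma~\ref{lsuphgeqaplp2} bounds how much $Avg^1$ can drop, i.e. bounds $Avg^1(H)-Avg^1(K)$ from above, and the matching lower construction is the endpoint chop.

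Concretely I would argue: \textbf{(lower bound)} For the chop $K_\epsilon=H\cap(-\infty,m-\epsilon]$ we have $d(H,K_\epsilon)\le\epsilon$; using additivity of $Avg^1$ over the partition $K_\epsilon\cup(H\cap(m-\epsilon,m])$ and the fact that the removed slice has $Avg^1\ge m-\epsilon$ while $\lambda$ of it is at most $\epsilon$, a short rearrangement gives $a-Avg^1(K_\epsilon)\le \frac{\epsilon\cdot(\text{slice contribution})}{l}$, and optimizing (taking $H$ near $m$ to actually contain an interval) yields the supremum of these ratios is at least $\frac{l_2}{2l}\cdot\frac{?}{}$ — I would instead directly invoke Lemma~\ref{lsuphgeqaplp2} in the reverse direction: since $m\ge a+\frac l2$ always, consider $K$ equal to $H$ with its rightmost interval-piece of length $2\epsilon$ reflected, which keeps $\lambda$ and $\sup$ controlled and drops $Avg^1$ by exactly $\frac\epsilon2\cdot(\ldots)$. \textbf{(sharpness)} If $H=L\cup[a-\tfrac l2,a+\tfrac l2]$ with $\lambda(L)=0$, $L\subset(-\infty,a-\tfrac l2)$, then $Avg^1(H)=a$ and $\sup H=a+\tfrac l2$; any compact $K$ with $d(H,K)\le\epsilon$ satisfies $\sup K\le a+\tfrac l2+\epsilon$ and $\lambda(K)\ge l-$ (negligible, since only an $\epsilon$-collar near the endpoints and $L$ can be lost, and $L$ is null), so by Lemma~\ref{lsuphgeqaplp2} applied to $K$, $Avg^1(K)\ge \sup K - \tfrac{\lambda(K)}2 \ge$ and a computation gives $Avg^1(H)-Avg^1(K)\le\frac\epsilon2+o(\epsilon)$, forcing $\overline D{\cal K}(H)\le\frac12$. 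Conversely if $H$ is not of this form, Lemma~\ref{lsuphgeqaplp2} gives $m>a+\tfrac l2$, i.e. there is genuine "room'' at the top: a positive-measure piece of $H$ lies strictly below $m$, and sliding a bottom portion of positive measure $p>0$ left by $\epsilon$ produces $K_\epsilon$ with $d=\epsilon$ and $Avg^1(H)-Avg^1(K_\epsilon)=\frac{p\epsilon}{l}$; choosing the piece so that $\frac pl>\frac12$ (possible exactly because, by the failure of the equality case, $H$ is "bottom-heavy enough'' — here I need the precise quantitative form of Lemma~\ref{lsuphgeqaplp2}'s proof, namely $l_1>l_2$) gives $\overline D{\cal K}(H)>\frac12$.

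\textbf{Main obstacle.} The delicate point is controlling $d(H,K)$ and $\lambda(K)$ simultaneously for the \emph{sharpness} direction: I must show that \emph{every} compact competitor within $\epsilon$ obeys $Avg^1(H)-Avg^1(K)\le\frac\epsilon2+o(\epsilon)$ when $H$ has the extremal form, and this is exactly where Lemma~\ref{lsuphgeqaplp2} (the $\sup$ version) does the heavy lifting — but I need the analogous lower control $\lambda(K)\ge l-O(\epsilon^?)$, which requires noting that $K\supseteq$ an $\epsilon$-shrink of the interval $[a-\tfrac l2,a+\tfrac l2]$ up to a null set (since $K$ is $\epsilon$-dense in $H$ and $H$ contains that interval, $K$ must have points within $\epsilon$ of every point of the interval, hence $\lambda(K)\ge l-2\epsilon$). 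Combining $\sup K\le m+\epsilon=a+\tfrac l2+\epsilon$ with $\lambda(K)\ge l-2\epsilon$ in $Avg^1(K)\ge\sup K-\tfrac{\lambda(K)}2$ yields $Avg^1(K)\ge a-\tfrac12\epsilon-\epsilon$... and I'd need to sharpen the constant, which is the real calculation — plausibly one uses that $K$ also cannot gain mass far to the left without violating $d(H,K)\le\epsilon$, pinning the estimate at exactly $\frac\epsilon2$. The "only if'' direction (non-extremal $H$ gives strict inequality) is the easier half once the quantitative $l_1>l_2$ fact from the proof of Lemma~\ref{lsuphgeqaplp2} is in hand.
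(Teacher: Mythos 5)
Your proposal never settles on a working argument, and both halves of the statement are left with genuine gaps. For the lower bound, the paper's proof consists of one explicit family and one short computation: take $K_\epsilon=H\cup[m,m+\epsilon]$ with $m=\sup H$, so that $d(H,K_\epsilon)=\epsilon$, and compute directly that the ratio tends to $\frac{m-Avg^1(H)}{\lambda(H)}$ as $\epsilon\to0+0$; Lemma~\ref{lsuphgeqaplp2} (namely $m\geq a+\frac{l}{2}$, with equality exactly for sets of the form $L\cup[a-\frac{l}{2},a+\frac{l}{2}]$, $\lambda(L)=0$) then gives both $\overline{D}{\cal{K}}(H)\geq\frac{1}{2}$ and the equality characterization in one stroke. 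You circle around this construction without producing it: the chop family $H\cap(-\infty,m-\epsilon]$ you correctly discard; the translation family you observe yields ratios close to $1$, which you yourself note ``would contradict the claimed sharpness,'' a tension you raise but never resolve; and the ``reflect the rightmost piece of length $2\epsilon$'' variant is left as a formula with a blank in it. No family is actually computed, so the inequality is never established in your text, let alone the identification of the limiting value $\frac{1}{l}(m-a)$ that drives the equality case.

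Second, your plan for the ``if'' direction --- a uniform bound $Avg^1(H)-Avg^1(K)\leq\frac{\epsilon}{2}+o(\epsilon)$ over \emph{all} compact competitors with $d(H,K)\leq\epsilon$ --- is not what the paper does, and it cannot be carried out with your estimates. Hausdorff closeness gives no lower bound on Lebesgue measure, so the step ``$\lambda(K)\geq l-2\epsilon$'' is false: take $K$ to be a short interval near $\inf H$ together with a finite $\epsilon$-net of $H$; then $d(H,K)\leq\epsilon$ while $\lambda(K)$ is arbitrarily small and $Avg^1(K)$ is far from $Avg^1(H)$, so the chain through Lemma~\ref{lsuphgeqaplp2} applied to $K$ breaks down, and you concede as much (``I'd need to sharpen the constant, which is the real calculation''). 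The paper avoids this entirely: its equality statement is read off from the equality case of Lemma~\ref{lsuphgeqaplp2} applied to $H$ itself, i.e.\ from when $\frac{1}{l}(m-a)=\frac{1}{2}$, not from any estimate over all competitors. In short, you identified the right lemma, but neither the lower bound nor the equality characterization is actually proved in the proposal, and the one route you commit to for sharpness rests on a false measure estimate.
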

\begin{proof}Set $m=\sup H$. Take any point $b$ such that $l_1=\lambda(H^{b-})>0$ and $l_2=\lambda(H^{b+})>0$. Set $a_1=Avg^1(H^{b-}),a_2=Avg^1(H^{b+})$. Let $\epsilon>0$. Then
\[Avg^1(H\cup[m,m+\epsilon])=\frac{l_1a_1+l_2a_2+\epsilon(m+\frac{\epsilon}{2})}{l_1+l_2+\epsilon},\ Avg^1(H)=\frac{l_1a_1+l_2a_2}{l_1+l_2}.\]
\[\frac{Avg^1(H\cup[m,m+\epsilon])-Avg^1(H)}{\epsilon}=\frac{ml_1+ml_2-l_1a_1-l_2a_2+l_1\frac{\epsilon}{2}+l_2\frac{\epsilon}{2}}{(l_1+l_2)(l_1+l_2+\epsilon)}\]
and if $\epsilon\to0+0$ then we get \[\frac{ml_1+ml_2-l_1a_1-l_2a_2}{l^2}=\frac{m}{l}-\frac{Avg^1(H)}{l}=\frac{1}{l}(m-Avg^1(H))\geq\frac{1}{2}\]
by \ref{lsuphgeqaplp2} and equality holds if and only if $H=L\cup[a-\frac{l}{2},a+\frac{l}{2}]$ where $L\subset H\cap(-\infty,a-\frac{l}{2})$ and $\lambda(L)=0$.
\end{proof}

Similarly one can prove

\begin{thm}Let ${\cal{K}}=Avg^1,\ H\in Dom({\cal{K}})$ be compact. Then $\underline{D}{\cal{K}}(H)\leq-\frac{1}{2}$ and equality holds if and only if $H=L\cup[a-\frac{l}{2},a+\frac{l}{2}]$ where $L\subset H\cap(a+\frac{l}{2},+\infty)$ and $\lambda(L)=0$ where $a=Avg^1(H),\ l=\lambda(H)$.\qed
\end{thm}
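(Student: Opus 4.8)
The plan is to mirror the proof of the preceding theorem for $\overline{D}{\cal{K}}(H)$, only now perturbing $H$ near its infimum rather than its supremum, and invoking the second version of Lemma~\ref{lsuphgeqaplp2} (the one phrased for $\inf H$) in place of the first. Concretely, set $m=\inf H$, $a=Avg^1(H)$, $l=\lambda(H)$. For $\epsilon>0$ consider the competitor $K=H\cup[m-\epsilon,m]$, which is compact, lies in $Dom({\cal{K}})$, and satisfies $d(H,K)=\epsilon$.

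First I would compute, exactly as in the $\overline{D}$ case but with the new interval attached on the left,
\[Avg^1\big(H\cup[m-\epsilon,m]\big)=\frac{l_1a_1+l_2a_2+\epsilon\big(m-\frac{\epsilon}{2}\big)}{l_1+l_2+\epsilon},\qquad Avg^1(H)=\frac{l_1a_1+l_2a_2}{l_1+l_2},\]
where $b$ is any point with $l_1=\lambda(H^{b-})>0$, $l_2=\lambda(H^{b+})>0$, $a_i=Avg^1$ of the corresponding piece, and $l=l_1+l_2$. Then I would form the difference quotient $\big(Avg^1(K)-Avg^1(H)\big)/d(H,K)=\big(Avg^1(K)-Avg^1(H)\big)/\epsilon$, simplify, and let $\epsilon\to0+0$ to obtain the limit $\frac{1}{l}\big(m-Avg^1(H)\big)=\frac{m-a}{l}$. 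Since this is one particular family of competitors shrinking to $H$, it furnishes an upper bound for $\varliminf$, hence $\underline{D}{\cal{K}}(H)\leq\frac{m-a}{l}\leq-\frac{1}{2}$, the last inequality being exactly $m\leq a-\frac{l}{2}$ from the $\inf$-version of Lemma~\ref{lsuphgeqaplp2}.

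For the equality characterization I would argue that $\underline{D}{\cal{K}}(H)=-\frac{1}{2}$ forces the bound $\frac{m-a}{l}\le-\frac12$ coming from this competitor family to be an equality, i.e. $m=a-\frac{l}{2}$; the $\inf$-version of Lemma~\ref{lsuphgeqaplp2} then says precisely that $H=L\cup[a-\frac{l}{2},a+\frac{l}{2}]$ with $L\subset H\cap(a+\frac{l}{2},+\infty)$ and $\lambda(L)=0$. Conversely, when $H$ has this form one checks directly (again using the explicit difference quotient above, now with $m=a-\frac{l}{2}$) that the left-perturbation family realizes the value $-\frac12$, and one must also verify that no other admissible perturbation $K$ with $d(H,K)\le\epsilon$ can push $\big(Avg^1(K)-Avg^1(H)\big)/d(H,K)$ below $-\frac12$ in the limit.

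I expect the last point — ruling out \emph{all} competitors, not just the one explicit family — to be the main obstacle, exactly as it implicitly is in the proof of the $\overline{D}$ theorem. The key observation that makes it go through is that for any compact $K$ with $d(H,K)\le\epsilon$ one has $\inf K\ge m-\epsilon$ and $\lambda(K)\le\lambda(S(H,\epsilon))\to l$, so by the $\inf$-version of Lemma~\ref{lsuphgeqaplp2} applied to $K$ we get $Avg^1(K)\ge\inf K+\frac{\lambda(K)}{2}\ge m-\epsilon+\tfrac12(\lambda(K))$, which after dividing by $d(H,K)$ and letting $\epsilon\to0+0$ keeps the quotient bounded below by $-\frac12$; combined with $Avg^1$ being slice-continuous / Cantor-continuous enough that $Avg^1(K)\to Avg^1(H)$ as $d(H,K)\to0$, this pins the $\varliminf$ at exactly $-\frac12$ precisely in the stated extremal case. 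Since the statement is flagged ``\qed'' as a routine transcription of the previous theorem, I would keep the write-up terse and defer the symmetric details to ``similarly one can prove''.
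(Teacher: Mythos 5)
Your first two paragraphs coincide with what the paper actually does: the theorem is dispatched with ``similarly one can prove'', i.e.\ repeat the $\overline{D}$ argument with the competitor $K=H\cup[m-\epsilon,m]$ attached at $m=\inf H$ (the overlap with $H$ is a null set since $m=\inf H$), obtain the limit $\frac{m-a}{l}$ of the difference quotient, bound it by $-\frac12$ via the $\inf$-version of Lemma~\ref{lsuphgeqaplp2}, and read the equality case off the equality case of that lemma. Up to there your proposal is correct and is the same proof.

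The extra step in your last paragraph, however, would fail. Nothing forces $\lambda(K)\geq\lambda(H)$ for a Hausdorff-close competitor, so the chain $Avg^1(K)\geq\inf K+\frac{\lambda(K)}{2}\geq m-\epsilon+\frac{\lambda(K)}{2}$ does not keep the quotient above $-\frac12$: take $K\subset H$ obtained by deleting from $H^{a+}$ a union of $N$ very short intervals of \emph{fixed} total measure $\mu>0$; then $K$ is compact, $d(H,K)\to0$ as $N\to\infty$, while $Avg^1(K)-Avg^1(H)$ is a fixed negative number, so the quotient tends to $-\infty$. For the same reason the appeal to ``$Avg^1(K)\to Avg^1(H)$ as $d(H,K)\to0$'' is not available -- the paper itself proves in Proposition~\ref{plavgnhc} that $Avg$ is \emph{not} Hausdorff-continuous. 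Note that the paper's own proof (of the $\overline{D}$ theorem, which this one mirrors) never attempts to control arbitrary competitors; the equality discussion is carried out only along the exhibited one-parameter family, the ``iff'' being exactly the equality case of Lemma~\ref{lsuphgeqaplp2}. So either drop the last paragraph and present the mirrored argument as the paper does, or be aware that a bound valid for \emph{all} Hausdorff-close compacta cannot be obtained along the lines you sketch (with the literal definition it fails unless the class of competitors is restricted, e.g.\ to $K\supseteq H$).
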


One might wonder if there is a finite upper limit for the upper derivatives. The answer is negative as the next example shows. 

\begin{ex}Let ${\cal{K}}=Avg^1$. Then $\sup\{\overline{D}{\cal{K}}(H):H\in Dom({\cal{K}})\}=+\infty$.
\end{ex}
\begin{proof}Let $a,b\in\mathbb{R},\ a>0,a<b$. Let $H_{a,b}=H=[0,a]\cup[b,b+a]$. Let $\epsilon>0$ and $H_{\epsilon}=H\cup[b+a,b+a+\epsilon]$. Then one can easily check that $Avg^1(H)=\frac{a+b}{2}$. In the usual way we get that
\[Avg^1(H_{\epsilon})=\frac{\frac{a}{2}a+\frac{2b+a+\epsilon}{2}(a+\epsilon)}{2a+\epsilon}.\]
\[Avg^1(H_{\epsilon})-Avg^1(H)=\frac{b\epsilon+a\epsilon+\epsilon^2}{2(2a+\epsilon)},\]
hence
\[\lim\limits_{\epsilon\to0+0}\frac{Avg^1(H_{\epsilon})-Avg^1(H)}{\epsilon}=\frac{a+b}{2}\frac{1}{2a},\]
which means that $\overline{D}{\cal{K}}(H_{a,b})\geq\frac{a+b}{2}\frac{1}{2a}$.
Now if we take $a$ fixed and $b$ tends to infinity then $\overline{D}{\cal{K}}(H_{a,b})$ must tend to infinity as well.
\end{proof}

\subsection{On continuity}


\begin{df}${\cal{K}}$ is called \textbf{u-Cantor-continuous} if $H,H_i\in Dom({\cal{K}})\ (i\in\mathbb{N}), H_i\cap H_j=\emptyset\ (i\ne j), H=\bigcup\limits_{i=1}^{\infty}H_i$. Then
$\lim\limits_{n\to\infty}{\cal{K}}(\bigcup\limits_{i=1}^{n}H_i)={\cal{K}}(H)$.
\end{df}

\cite{lambm} Lemma 2.19 gives that ${\cal{M}}^{\mu}$ is u-Cantor-continuous.

\begin{df}${\cal{K}}$ is called \textbf{u-bounded} if $H,H_1,H_2\in Dom\ {\cal{K}},\ H_1\cap H_2=\emptyset$ bounded sets then 
$$|{\cal{K}}(H)-{\cal{K}}(H\cup H_1\cup H_2)|\leq|{\cal{K}}(H)-{\cal{K}}(H\cup H_1)|+|{\cal{K}}(H)-{\cal{K}}(H\cup H_2)|.$$
\end{df}

\begin{rem}We can assume that $H\cap H_i=\emptyset\ (i=1,2)$ since if we consider $\hat{H_i}=H_i-H$ instead of $H_i\ (i=1,2)$ then we end up with the same inequality.
\end{rem}

\begin{prp}\label{pubns}If ${\cal{K}}$ is u-bounded, $H,H_i\in Dom\ {\cal{K}}\ (1\leq i\leq n,\ n\in\mathbb{N}),\ H_i\cap H_j=\emptyset\ (i\ne j)$ bounded sets then 
$$|{\cal{K}}(H)-{\cal{K}}(H\cup\bigcup_{i=1}^{n} H_i)|\leq\sum\limits_{i=1}^{n}|{\cal{K}}(H)-{\cal{K}}(H\cup H_i)|.$$
\end{prp}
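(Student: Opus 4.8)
The plan is to prove the inequality by induction on $n$. For $n=1$ the two sides coincide, and for $n=2$ the asserted inequality is precisely the defining property of u-boundedness applied to $H,H_1,H_2$; these cases form the base of the induction.

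For the inductive step, assume the inequality holds for any $n$ pairwise disjoint bounded sets from $Dom({\cal{K}})$, and let $H_1,\dots,H_{n+1}\in Dom({\cal{K}})$ be pairwise disjoint and bounded. Put $G=\bigcup_{i=1}^{n}H_i$. Then $G$ and $H_{n+1}$ are disjoint bounded sets, so u-boundedness applied to $H$, $G$ and $H_{n+1}$ gives
\[|{\cal{K}}(H)-{\cal{K}}(H\cup G\cup H_{n+1})|\leq|{\cal{K}}(H)-{\cal{K}}(H\cup G)|+|{\cal{K}}(H)-{\cal{K}}(H\cup H_{n+1})|.\]
To the first term on the right I apply the inductive hypothesis to $H,H_1,\dots,H_n$, which bounds it by $\sum_{i=1}^{n}|{\cal{K}}(H)-{\cal{K}}(H\cup H_i)|$. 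Since $H\cup G\cup H_{n+1}=H\cup\bigcup_{i=1}^{n+1}H_i$, substituting this estimate into the display and collecting terms yields exactly the desired inequality for $n+1$.

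The only thing that needs a word of care is the bookkeeping of domains: in order to invoke u-boundedness with $G$ playing the role of $H_1$, and to apply the inductive hypothesis, one needs the partial unions $\bigcup_{i=1}^{k}H_i$ together with the sets $H\cup\bigcup_{i=1}^{k}H_i$ to lie in $Dom({\cal{K}})$. As is done tacitly throughout the paper, we read the appearance of a value ${\cal{K}}(\cdot)$ as including the assertion that its argument is in $Dom({\cal{K}})$, and for the concrete means considered here this membership is automatic since their domains are closed under finite unions. No genuine obstacle is expected: once the grouping ``$\bigcup_{i=1}^{n}H_i$ versus $H_{n+1}$'' is chosen, the argument is a routine induction.
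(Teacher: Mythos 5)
Your proposal is correct and follows essentially the same route as the paper: induction on $n$, grouping $\bigcup_{i=1}^{n}H_i$ as one set and applying the u-boundedness inequality to it together with $H_{n+1}$, then invoking the inductive hypothesis. The extra remark on domain bookkeeping is a reasonable (and harmless) clarification of what the paper leaves tacit.
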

\begin{proof}We prove it by induction. It is true for 2 sets by definition. Assume it holds for $n$ sets. then clearly 
$$|{\cal{K}}(H)-{\cal{K}}(H\cup\bigcup_{i=1}^{n} H_i\cup H_{n+1})|\leq|{\cal{K}}(H)-{\cal{K}}(H\cup \bigcup_{i=1}^{n} H_i)|+|{\cal{K}}(H)-{\cal{K}}(H\cup H_{n+1})|$$
$$\leq\sum\limits_{i=1}^{n}|{\cal{K}}(H)-{\cal{K}}(H\cup H_i)|+|{\cal{K}}(H)-{\cal{K}}(H\cup H_{n+1})|.$$
\end{proof}

\begin{prp}If ${\cal{K}}$ is u-bounded and u-Cantor-continuous, $H,H_i,\bigcup\limits_{i=1}^{\infty} H_i\in Dom\ {\cal{K}}\ (i\in\mathbb{N}),\ H_i\cap H_j=\emptyset\ (i\ne j)$ bounded sets then 
$$|{\cal{K}}(H)-{\cal{K}}(H\cup\bigcup_{i=1}^{\infty} H_i)|\leq\sum\limits_{i=1}^{\infty}|{\cal{K}}(H)-{\cal{K}}(H\cup H_i)|.$$
\end{prp}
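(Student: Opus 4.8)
The plan is to combine the finite triangle-type inequality from Proposition \ref{pubns} with the u-Cantor-continuity hypothesis by passing to the limit in $n$. First I would fix $n\in\mathbb{N}$ and apply Proposition \ref{pubns} to the sets $H, H_1,\dots,H_n$ (these are pairwise disjoint bounded members of $Dom\ {\cal{K}}$, so the hypotheses are met), obtaining
\[
\Big|{\cal{K}}(H)-{\cal{K}}\big(H\cup\bigcup_{i=1}^{n}H_i\big)\Big|\leq\sum_{i=1}^{n}\big|{\cal{K}}(H)-{\cal{K}}(H\cup H_i)\big|\leq\sum_{i=1}^{\infty}\big|{\cal{K}}(H)-{\cal{K}}(H\cup H_i)\big|,
\]
where in the last step I simply bounded the finite partial sum by the full series (this is trivially valid whether the series converges or diverges to $+\infty$).

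Next I would take the limit $n\to\infty$ on the left-hand side. Write $G=\bigcup_{i=1}^{\infty}H_i$ and $G_n=\bigcup_{i=1}^{n}H_i$. The family $(H_i)$ is a pairwise disjoint decomposition of $G$, and by hypothesis $G\in Dom\ {\cal{K}}$; since ${\cal{K}}$ is u-Cantor-continuous, ${\cal{K}}(G_n)\to{\cal{K}}(G)$. To handle $H\cup G$ rather than $G$ itself, I would apply u-Cantor-continuity to the disjoint decomposition $H\cup G=H\cup\hat H_1\cup\hat H_2\cup\cdots$ where $\hat H_i=H_i-H$ (as in the Remark preceding Proposition \ref{pubns}, replacing $H_i$ by $\hat H_i$ changes neither $H\cup G_n$ nor $H\cup H_i$, and the $\hat H_i$ together with $H$ are pairwise disjoint). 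Then u-Cantor-continuity gives ${\cal{K}}(H\cup G_n)\to{\cal{K}}(H\cup G)$, and since the absolute value is continuous, the left-hand side of the displayed inequality converges to $\big|{\cal{K}}(H)-{\cal{K}}(H\cup\bigcup_{i=1}^{\infty}H_i)\big|$. Passing to the limit in an inequality whose right-hand side is constant in $n$ yields the claim.

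The only real point requiring care — and the place where I would be most careful in the writeup — is checking that u-Cantor-continuity is actually applicable: it requires $H\cup\hat H_i\in Dom\ {\cal{K}}$ and the partial unions $H\cup G_n\in Dom\ {\cal{K}}$ for every $n$, as well as $H\cup G\in Dom\ {\cal{K}}$. The last is an explicit hypothesis once we note $\bigcup H_i\in Dom\ {\cal{K}}$ and, typically, $Dom\ {\cal{K}}$ is closed under adding a member; the finite partial unions lie in $Dom\ {\cal{K}}$ for the same reason Proposition \ref{pubns} could be invoked. Everything else is a routine limit argument, so I do not expect a substantive obstacle here; the proposition is essentially the ``$n=\infty$'' corollary of Proposition \ref{pubns} made legitimate by u-Cantor-continuity.
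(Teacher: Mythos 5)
Your proposal is correct and follows essentially the same route as the paper: apply Proposition \ref{pubns} for each finite $n$, bound the partial sum by the full series, and use u-Cantor-continuity (with $H$ absorbed into the decomposition — the paper groups it as $H\cup H_1$, you keep $H$ as a separate disjoint piece, which is the same trick) to pass to the limit on the left-hand side. The disjointness adjustment via $\hat H_i=H_i-H$ is exactly the paper's preliminary reduction, so there is no substantive difference.
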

\begin{proof}We can assume that $H\cap H_i=\emptyset\ (i\in\mathbb{N})$. Using \ref{pubns} we have 
\begin{equation}\label{eq1}
\left\lvert{\cal{K}}(H)-{\cal{K}}\Big(H\cup\bigcup_{i=1}^{n} H_i\Big)\right\lvert\leq\sum\limits_{i=1}^{n}|{\cal{K}}(H)-{\cal{K}}(H\cup H_i)|\leq\sum\limits_{i=1}^{\infty}|{\cal{K}}(H)-{\cal{K}}(H\cup H_i)|.
\end{equation}
 U-Cantor-continuity gives that $\lim\limits_{n\to\infty}{\cal{K}}(H\cup\bigcup\limits_{i=1}^{n} H_i)={\cal{K}}(H\cup\bigcup\limits_{i=1}^{\infty} H_i)$ because modify the first set to $H\cup H_1$ in the definition of u-Cantor-continuity.

When $n$ tends to infinity in (\ref{eq1}) we get the statement.
\end{proof}

\begin{ex}If we want u-boundedness to be valid for basic means then we cannot omit the disjointness condition in the definition of u-boundedness.

We show it for the arithmetic mean first. Let ${\cal{K}}$ denote the arithmetic mean.
Let $H_0=\{0\},H_1=\{1,-1\},H_2=\{-1,2\}$. Then $|{\cal{K}}(H)-{\cal{K}}(H\cup H_1\cup H_2)|\leq|{\cal{K}}(H)-{\cal{K}}(H\cup H_1)|+|{\cal{K}}(H)-{\cal{K}}(H\cup H_2)|$ does not hold since $|{\cal{K}}(H)-{\cal{K}}(H\cup H_1\cup H_2)|=0.5,\ |{\cal{K}}(H)-{\cal{K}}(H\cup H_1)|=0,|{\cal{K}}(H)-{\cal{K}}(H\cup H_2)|=\frac{1}{3}$.

To get an example for $Avg^1$ simply put small $\epsilon$ neighbourhoods around the points $-1,0,1,2$ and use those intervals instead of the points to create the same sets. When $\epsilon$ tends to 0 then $Avg^1$ tends to the arithmetic mean (see \cite{lamisii} Lemma 6) hence the inequality for $Avg^1$ cannot hold either. 
\end{ex}

\begin{ex}${\cal{M}}^{\mu}$ is u-bounded.
\end{ex}
\begin{proof}Let $H,H_1,H_2\in Dom\ {\cal{M}}^{\mu},\ H_1\cap H_2=\emptyset$ bounded. Set ${\cal{K}}={\cal{M}}^{\mu}$. We can assume that $H\cap H_i=\emptyset\ (i=1,2)$.
$$|{\cal{K}}(H)-{\cal{K}}(H\cup H_1\cup H_2)|=\left\lvert\frac{\mu(H){\cal{K}}(H)+\mu(H_1){\cal{K}}(H_1)+\mu(H_2){\cal{K}}(H_2)}{\mu(H)+\mu(H_1)+\mu(H_2)}-{\cal{K}}(H)\right\lvert\leq$$
$$\frac{\mu(H_1)|{\cal{K}}(H_1)-{\cal{K}}(H)|+\mu(H_2)|{\cal{K}}(H_2)-{\cal{K}}(H)|}{\mu(H)+\mu(H_1)+\mu(H_2)}\leq$$
$$\frac{\mu(H_1)|{\cal{K}}(H_1)-{\cal{K}}(H)|+\mu(H_2)|{\cal{K}}(H_2)-{\cal{K}}(H)|}{\mu(H_1)+\mu(H_2)}.$$
But similarly we get that $$|{\cal{K}}(H)-{\cal{K}}(H\cup H_i)|=\frac{\mu(H_i)|{\cal{K}}(H_i)-{\cal{K}}(H)|}{\mu(H_1)+\mu(H_2)}\ \ (i=1,2)$$ which gives the statement.
\end{proof}

\section{Constructing new means from old ones}

\begin{df}Let $f:\mathbb{R}\to\mathbb{R}$ be a strictly monotone continuous function. Let ${\cal{K}}$ be a mean. If $H\in Dom({\cal{K}})$ set ${\cal{K}}^f(H)=f^{-1}({\cal{K}}(f(H)))$.
\end{df}

\begin{prp}Let $f:\mathbb{R}\to\mathbb{R}$ be a strictly monotone continuous function. If ${\cal{K}}$ is internal, strictly-internal, monotone, mean-monotone, union-monotone, slice-continuous, point-continuous, Cantor-continuous, finite-independent then so is ${\cal{K}}^f$.
\end{prp}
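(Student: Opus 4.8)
The plan is to handle the list of properties one at a time, in each case transporting the defining inequality or limit along the homeomorphism $f$. The common mechanism is that $f$ (being strictly monotone and continuous) is an order isomorphism of $\mathbb{R}$ onto its image, so it maps $\inf,\sup$ to $\inf,\sup$ (or reverses them when $f$ is decreasing), preserves strict and weak inequalities (resp. reverses them), preserves the operations $H\mapsto H^{x\pm}$ up to swapping $+$ and $-$, and is a homeomorphism so it preserves limits and closures. I would first record these facts as a short preliminary observation, distinguishing once and for all the increasing case from the decreasing case, since in the decreasing case $f(H^{x+}) = f(H)^{f(x)-}$ and the roles of $\varliminf$ and $\varlimsup$ swap; after noting this I would, without loss of generality, write the rest assuming $f$ is increasing, remarking that the decreasing case is symmetric.

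Then I would go through the properties. For \emph{internality}: $\varliminf H \le \varliminf f(H)$ type relations combined with ${\cal K}(f(H))\in[\inf f(H),\sup f(H)]$ and applying $f^{-1}$ gives ${\cal K}^f(H)\in[\inf H,\sup H]$; \emph{strict internality} is identical with strict inequalities, using that $f^{-1}$ is strictly monotone. For \emph{monotonicity} and its relatives (\emph{mean-monotone}, \emph{union-monotone}): if $H,K$ satisfy the relevant hypothesis (e.g. $\sup H\le\inf K$, or the union/mean-monotone configuration), then $f(H),f(K)$ satisfy it too because $f$ preserves order, so ${\cal K}(f(H))\le{\cal K}(f(H\cup K)) = {\cal K}(f(H)\cup f(K))\le{\cal K}(f(K))$, and applying the increasing $f^{-1}$ preserves these inequalities; here I would use that $f(H\cup K)=f(H)\cup f(K)$. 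For \emph{slice-continuity}: the map $x\mapsto {\cal K}^f(H^{x+}) = f^{-1}\big({\cal K}(f(H)^{f(x)+})\big)$ is a composition of the continuous function $x\mapsto f(x)$, the continuous (by slice-continuity of ${\cal K}$) function $y\mapsto{\cal K}(f(H)^{y+})$, and the continuous $f^{-1}$, hence continuous. For \emph{point-continuity} and \emph{Cantor-continuity}: these are statements about convergence $H_n\to H$ (in the appropriate sense) implying ${\cal K}(H_n)\to{\cal K}(H)$; one checks $f(H_n)$ converges to $f(H)$ in the same sense (here $f$ being uniformly continuous on the relevant bounded range matters for Cantor-continuity, where the sets involved are bounded so $f$ restricted to a compact interval is uniformly continuous), apply the hypothesis on ${\cal K}$, then apply the continuous $f^{-1}$. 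For \emph{finite-independence}: $f$ is injective, so $f$ maps $H$ and $H$ minus finitely many points to sets differing by finitely many points, and ${\cal K}(f(H)) = {\cal K}(f(H)\setminus\text{finite})$ gives the claim after $f^{-1}$.

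The step I expect to be the main obstacle is getting the precise convergence notions right for \emph{point-continuity} and \emph{Cantor-continuity}, i.e.\ verifying that the mode of convergence used in \cite{lamis}/\cite{lamisii} to define these properties is genuinely preserved by $f$ and by $f^{-1}$. For Cantor-continuity the sets are bounded, so $f$ and $f^{-1}$ are uniformly continuous on the relevant compact intervals and the estimate goes through cleanly; for point-continuity one must check that $f$ preserves whatever pointwise/Hausdorff-type convergence is in play, which is automatic because $f$ is a homeomorphism of $\mathbb{R}$ but does require a line of justification. Everything else is a routine transport of inequalities along an order isomorphism, so I would state those cases briefly and spend the few extra words on the two continuity cases.
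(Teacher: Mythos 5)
Your proposal matches the paper's proof essentially step for step: the paper likewise records the order-isomorphism facts about $f$ (preservation of $\inf$, $\sup$, $\varliminf$, $\varlimsup$, unions, intersections, disjointness, cardinality, and $f(H^{x-})=f(H)^{f(x)-}$), assumes $f$ increasing without loss of generality, and transports each property by applying ${\cal{K}}$ to $f(H)$ and then the continuous $f^{-1}$, including the composition argument for slice-continuity and taking $f^{-1}$ of the limit for point- and Cantor-continuity. The only minor difference is your appeal to uniform continuity for Cantor-continuity, which is unnecessary since that property concerns nested decreasing sequences and the injectivity of $f$ already gives $f(\bigcap H_i)=\bigcap f(H_i)$, exactly as the paper uses.
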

\begin{proof}First let us note some basic facts regarding $f$. Obviously $f$ preserves the order, $A\subset B\Rightarrow f(A)\subset f(B)$, $f(\cup A_i)=\cup f(A_i),f(\cap A_i)=\cap f(A_i)$ (for any system of $A_i$). Hence if $A,B$ are disjoint then so are $f(A),f(B)$. Clearly $f(\inf A)=\inf f(A),f(\sup A)=\sup f(A),f(\varliminf A)=\varliminf f(A),f(\varlimsup A)=\varlimsup f(A)$. Evidently $|f(A)|=|A|$. 

Let us assume that $f$ is increasing (the other case is similar).

If ${\cal{K}}$ is internal then so is ${\cal{K}}^f$: $\inf f(H)\leq{\cal{K}}(f(H))\leq\sup f(H)$ and applying $f^{-1}$ we get that $\inf H=f^{-1}(\inf f(H))\leq f^{-1}({\cal{K}}(f(H)))\leq f^{-1}(\sup f(H))=\sup H$.

If ${\cal{K}}$ is strict-internal then so is ${\cal{K}}^f$: it can be proved as internality, just replace $\inf,\sup$ with $\varliminf,\varlimsup$.

If ${\cal{K}}$ is monotone then so is ${\cal{K}}^f$: Let $\sup H_1\leq\inf H_2$. Then $\sup f(H_1)\leq\inf f(H_2)$ which gives that ${\cal{K}}(f(H_1))\leq{\cal{K}}(f(H_1)\cup f(H_2))\leq{\cal{K}}(f(H_2))$. Using that $f(H_1)\cup f(H_2)=f(H_1\cup H_2)$ and applying $f^{-1}$ we get the statement.

If ${\cal{K}}$ is mean-monotone then so is ${\cal{K}}^f$: Let $\sup K_1\leq f^{-1}({\cal{K}}(f(H)))\leq\inf K_2$. Then $f(\inf K_1)=\inf f(K_1)\leq{\cal{K}}(f(H))\leq\sup f(K_2)=f(\sup K_2)$ which gives that ${\cal{K}}(f(H\cup K_1))={\cal{K}}(f(H)\cup f(K_1))\leq{\cal{K}}(f(H))\leq{\cal{K}}(f(H)\cup f(K_2))={\cal{K}}(f(H\cup K_2))$. Applying $f^{-1}$ gives the assertion.

If ${\cal{K}}$ is union-monotone then so is ${\cal{K}}^f$: Assume that $B\cap C=\emptyset$ and ${\cal{K}}^f(A)\leq{\cal{K}}^f(A\cup B),{\cal{K}}^f(A)\leq{\cal{K}}^f(A\cup C)$. Then $f(B)\cap f(C)=\emptyset$ and ${\cal{K}}(f(A))\leq{\cal{K}}(f(A)\cup f(B)),{\cal{K}}(f(A))\leq{\cal{K}}(f(A)\cup f(C))$ which implies that ${\cal{K}}(f(A))\leq{\cal{K}}(f(A)\cup f(B)\cup f(C))$. From that we get that ${\cal{K}}^f(A)\leq{\cal{K}}^f(A\cup B\cup C)$. The other inequality is similar.

If ${\cal{K}}$ is slice-continuous then so is ${\cal{K}}^f$: First observe that $f(H^{x-})=f(H)^{f(x)-}$. We have to show that ${\cal{K}}^f(H^{x-})=f^{-1}({\cal{K}}(f(H)^{f(x)-}))$ is continuous. This follows from that $x\mapsto {\cal{K}}(f(H)^{f(x)-})$ is continuous. The "+" case is similar.

If ${\cal{K}}$ is point-continuous then so is ${\cal{K}}^f$: Let $H\in Dom({\cal{K}})$ and $x\in\mathbb{R}$. We know that $\lim\limits_{\epsilon\to 0+0}{\cal{K}}(f(H-S(x,\epsilon)))=\lim\limits_{\epsilon\to 0+0}{\cal{K}}(f(H)-S(x,\epsilon))={\cal{K}}(f(H)).$ Now by continuity of $f^{-1}$ we get that 
\[f^{-1}(\lim\limits_{\epsilon\to 0+0}{\cal{K}}(f(H-S(x,\epsilon))))=\lim\limits_{\epsilon\to 0+0}f^{-1}({\cal{K}}(f(H-S(x,\epsilon))))\]
which gives that $\lim\limits_{\epsilon\to 0+0}{\cal{K}}^f(H-S(x,\epsilon))={\cal{K}}^f(H)$.

If ${\cal{K}}$ is Cantor-continuous then so is ${\cal{K}}^f$: Let $H_{i+1}\subset H_i$. Then $f(H_{i+1})\subset f(H_i)$, ${\cal{K}}(f(H_i))\to{\cal{K}}(\cap f(H_i))={\cal{K}}(f(\cap H_i))$. If we apply $f^{-1}$ for both sides then we get the statement.

Finite-independence: $|f(K)|=|K|$ implies that ${\cal{K}}(f(H-K))={\cal{K}}(f(H)-f(K))={\cal{K}}(f(H))$ if $K$ is finite. Similarly for union.
\end{proof}

\begin{ex}Let ${\cal{K}}$ be disjoint monotone. Then $f:\mathbb{R}\to\mathbb{R}$ being strictly monotone and continuous does not imply that ${\cal{K}}^f$ is disjoint monotone.
\end{ex}
\begin{proof}Let ${\cal{K}}=Avg^1,\ f(x)=x^2, H_1=[0,1]\cup[2,3],H_2=(1,2)$. Then $H_1\cap H_2=\emptyset,{\cal{K}}(H_1)\leq{\cal{K}}(H_2)$ holds. But ${\cal{K}}(f(H_1\cup H_2))=4.5\not\leq {\cal{K}}(f(H_2))=2.5$.
\end{proof}

\begin{df}Let $f:\mathbb{R}\to\mathbb{R}$ be a strictly monotone continuous function. If $H$ is of positive Lebesgue measure then set $Avg_f(H)=f^{-1}\Big(\frac{\int\limits_H fd\lambda}{\lambda(H)}\Big)$.
\end{df}

\begin{prp}$Avg_f$ is strictly-internal, disjoint-monotone, Cantor-continuous, finite-independent.
\end{prp}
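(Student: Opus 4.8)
The plan is to verify each of the four properties in turn, in each case reducing the claim about $Avg_f$ to the corresponding (already known or trivially checked) fact about $Avg^1$, using that $Avg_f(H)=f^{-1}\big(Avg^1(f\cdot\mathbf{1}_H)\text{-type expression}\big)$ — more precisely, writing $F(H)=\frac{1}{\lambda(H)}\int_H f\,d\lambda$ so that $Avg_f(H)=f^{-1}(F(H))$, and noting that $F$ is itself a "weighted arithmetic average of the values of $f$ on $H$", so it obeys exactly the algebraic identity
\[
F(H_1\cup H_2)=\frac{\lambda(H_1)F(H_1)+\lambda(H_2)F(H_2)}{\lambda(H_1)+\lambda(H_2)}\qquad(H_1\cap H_2=\emptyset).
\]
I will record this identity at the start since all four parts lean on it.

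First, for strict-internality: I would show $\varliminf H<F(H)<\varlimsup H$ implies $f(\varliminf H)<F(H)<f(\varlimsup H)$ when $f$ is increasing (and the reversed inequalities when $f$ is decreasing, which on applying $f^{-1}$ flips back), hence $\varliminf H<Avg_f(H)<\varlimsup H$. The key point is that $F(H)$ is a genuine average of $f$ over $H$ with respect to a probability measure with full support on $H$ (normalized Lebesgue measure restricted to $H$), so it lies strictly between $\operatorname{ess\,inf}_H f$ and $\operatorname{ess\,sup}_H f$; since $f$ is continuous and strictly monotone these essential bounds are $f(\varliminf H)$ and $f(\varlimsup H)$ (up to the measure-zero subtleties already handled for $Avg^1$ in \cite{lambm} Proposition 2.7, which I would cite rather than reprove). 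Applying the continuous increasing $f^{-1}$ preserves the strict inequalities.

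Second, for disjoint-monotonicity: suppose $H_1\cap H_2=\emptyset$ and $Avg_f(H_1)\le Avg_f(H_2)$, i.e. $F(H_1)\le F(H_2)$ (since $f^{-1}$ is monotone — increasing or decreasing, but it is applied to both sides so the ordering of the $F$-values is the same as, or exactly the reverse of, the ordering of the $Avg_f$-values; in either case the hypothesis translates to $F(H_1)\le F(H_2)$ or $F(H_1)\ge F(H_2)$, and I treat the increasing case, the decreasing one being symmetric). Then the weighted-average identity immediately gives $F(H_1)\le F(H_1\cup H_2)\le F(H_2)$, and applying $f^{-1}$ returns $Avg_f(H_1)\le Avg_f(H_1\cup H_2)\le Avg_f(H_2)$. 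Finite-independence is even easier: removing or adjoining a finite (indeed any Lebesgue-null) set changes neither $\int_H f\,d\lambda$ nor $\lambda(H)$, so $F$, hence $Avg_f$, is unchanged.

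Third, Cantor-continuity: for a decreasing sequence $H_{i+1}\subset H_i$ in $Dom(Avg_f)$ with $H=\bigcap H_i\in Dom(Avg_f)$, I would show $F(H_i)\to F(H)$ and then apply continuity of $f^{-1}$. Here $\lambda(H_i)\downarrow\lambda(H)>0$ by continuity of measure from above (valid because $\lambda(H_1)<\infty$, the sets being bounded), and $\int_{H_i} f\,d\lambda\to\int_H f\,d\lambda$ by dominated convergence, $f$ being bounded on the bounded set $H_1$ and $\mathbf{1}_{H_i}\to\mathbf{1}_H$ pointwise a.e.; hence $F(H_i)=\frac{\int_{H_i}f\,d\lambda}{\lambda(H_i)}\to\frac{\int_H f\,d\lambda}{\lambda(H)}=F(H)$, and $Avg_f(H_i)=f^{-1}(F(H_i))\to f^{-1}(F(H))=Avg_f(H)$. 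The main thing to be careful about — the only real obstacle — is the direction-of-monotonicity bookkeeping for $f$ (increasing versus decreasing) in the strict-internality and disjoint-monotonicity parts, since there $f^{-1}$ is applied and a decreasing $f$ reverses inequalities; handling the two cases in parallel, exactly as the earlier proposition on ${\cal K}^f$ does, disposes of it.
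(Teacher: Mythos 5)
Your proof is correct and follows essentially the same route as the paper's: reduce each property to the corresponding fact about the untransformed average $F(H)=\frac{\int_H f\,d\lambda}{\lambda(H)}$ (the weighted-average identity for disjoint unions for disjoint-monotonicity, $\lambda(H_n)\to\lambda(H)$ together with convergence of the integrals for Cantor-continuity, null-set invariance of the integral for finite-independence) and then apply the continuous monotone $f^{-1}$, splitting into the increasing and decreasing cases. The only cosmetic differences are that you invoke dominated convergence where the paper writes out an explicit algebraic estimate, and for internality you argue via essential bounds with strict inequalities (citing the $Avg^1$ result) where the paper settles for the immediate non-strict bound $\lambda(H)f(\varliminf H)\le\int_H f\,d\lambda\le\lambda(H)f(\varlimsup H)$.
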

\begin{proof}Let us assume that $f$ is increasing (the other case is similar).

Strict-internal: Clearly $\lambda(H)f(\varliminf H)\leq\int\limits_H fd\lambda\leq\lambda(H)f(\varlimsup H)$ which gives the statement.

Disjoint-monotone: Let $H_1\cap H_2=\emptyset,\ \frac{\int\limits_{H_1} fd\lambda}{\lambda(H_1)}\leq\frac{\int\limits_{H_2} fd\lambda}{\lambda(H_2)}$. We need $\frac{\int\limits_{H_1} fd\lambda}{\lambda(H_1)}\leq\frac{\int\limits_{H_1\cup H_2} fd\lambda}{\lambda(H_1\cup H_2)}=\frac{\int\limits_{H_1} fd\lambda+\int\limits_{H_2} fd\lambda}{\lambda(H_1)+\lambda(H_2)}$. Let us multiply both sides with $\lambda(H_1)(\lambda(H_1)+\lambda(H_2))$. Then we end up with $\lambda(H_2){\int\limits_{H_1} fd\lambda}\leq\lambda(H_1){\int\limits_{H_2} fd\lambda}$ which is equivalent to the assumption.

Cantor-continuous: Let $H=\cap_1^{\infty}H_n$. Then \[\left\lvert\frac{\int\limits_{H_n} fd\lambda}{\lambda(H_n)}-\frac{\int\limits_H fd\lambda}{\lambda(H)}\right\lvert
=\left\lvert\frac{\lambda(H)(\int\limits_{H_n} fd\lambda-\int\limits_{H} fd\lambda)+(\lambda(H)-\lambda(H_n))\int\limits_{H} fd\lambda}{\lambda(H_n)\lambda(H)}\right\lvert\to 0\]
because $\lambda(H_n)\to\lambda(H)$.

Finite-independent: The integral has this property.
\end{proof}

$Avg_f$ being disjoint-monotone shows that $Avg^f\ne Avg_f$ however we also give an example to prove that.

\begin{ex}$Avg^f\ne Avg_f$. For $f(x)=x^2$ we get that $Avg^f([a,b])=\sqrt{\frac{a^2+b^2}{2}}$ while $Avg_f([a,b])=\sqrt{\frac{a^2+ab+b^2}{3}}$.
\end{ex}

\section{Limit of means}

We have already defined and investigated means that were constructed via limit of means. Here we present some such examples. 
\begin{enumerate}
\item If $H\subset\mathbb{R}$ then $LAvg(H)=\lim\limits_{n\to\infty}Avg(S(H,\frac{1}{n}))$ (see \cite{lamis} Section 5).

\item  If $H\subset\mathbb{R}$ then ${\cal{M}}^{eds}(H)=\lim\limits_{n\to\infty}\A(\{a+\frac{i}{n}(b-a):H_{n,i}\ne\emptyset\})$ where $a=\inf H,b=\sup H,\ H_{n,i}=H\cap[a+\frac{i}{n}(b-a),a+\frac{i+1}{n}(b-a))$ (see \cite{lamis} Section 6).

\item If $cl(H-H')=H$ then let ${\cal{M}}^{iso}(H)=\lim\limits_{n\to\infty}\A(H-S(H',\frac{1}{n}))$ (see \cite{lamis} Section 3.1).

\item If $({\cal{I}}_k)$ be an ascending sequence of ideals then ${\cal{M}}^{({\cal{I}}_k)}=\lim\limits_{n\to\infty}{\cal{M}}^{({\cal{I}}_1,\dots,{\cal{I}}_n)}$  (see \cite{lamis} Section 3.3).
\end{enumerate}

If we examine these examples carefully then we can see the underlying more basic means:
\begin{enumerate}
\item $Avg(S(H,\frac{1}{n}))$ is a mean for $n\in\mathbb{N}$.

\item $\A(\{a+\frac{i}{n}(b-a):H_{n,i}\ne\emptyset\})$  is a mean for $n\in\mathbb{N}$.

\item $\A(H-S(H',\frac{1}{n}))$ is a mean for $n\in\mathbb{N}$ where $H-S(H',\frac{1}{n})\ne\emptyset$.

\item ${\cal{M}}^{({\cal{I}}_1,\dots,{\cal{I}}_n)}$ is a mean for $n\in\mathbb{N}$.
\end{enumerate}

\subsection{Some properties of underlying means}
\begin{prp}Let $\delta>0$. Then ${\cal{K}}(H)=Avg(S(H,\delta))$ is a mean. ${\cal{K}}$ is monotone, closed, slice-continuous.
\end{prp}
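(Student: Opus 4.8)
The plan is to verify the three claimed properties of $\mathcal{K}(H)=Avg(S(H,\delta))$ in turn, after first noting that $\mathcal{K}$ is well-defined: for any bounded $H$ (or even just any nonempty $H$), $S(H,\delta)$ is a nonempty open set, hence Lebesgue measurable with positive measure, so $Avg^1(S(H,\delta))=Avg(S(H,\delta))$ makes sense. One should also record the elementary set-theoretic fact that $A\subset B$ implies $S(A,\delta)\subset S(B,\delta)$, and $S(A\cup B,\delta)=S(A,\delta)\cup S(B,\delta)$; these are the bridge between operations on $H$ and operations on $S(H,\delta)$.

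For \textbf{monotonicity}: suppose $\sup H_1\leq\inf H_2$. Then every point of $S(H_1,\delta)$ is $<\sup H_1+\delta\leq\inf H_2+\delta$, and every point of $S(H_2,\delta)$ is $>\inf H_2-\delta$; so the two fattened sets may overlap, but we still have $\inf S(H_1,\delta)=\inf H_1-\delta$ and $\sup S(H_2,\delta)=\sup H_2+\delta$ with $\sup S(H_1,\delta)=\sup H_1+\delta$, $\inf S(H_2,\delta)=\inf H_2-\delta$. The cleanest route is to use that $Avg^1$ is monotone in the weaker ``disjoint/ordered'' sense is not quite enough here because the fattened sets need not be ordered. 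Instead I would argue directly: writing $A=S(H_1,\delta)$, $B=S(H_2,\delta)$, $U=S(H_1\cup H_2,\delta)=A\cup B$, one shows $Avg^1(A)\leq Avg^1(U)\leq Avg^1(B)$ by the splitting identity $Avg^1(U)=\frac{\lambda(A)Avg^1(A)+\lambda(B\setminus A)Avg^1(B\setminus A)}{\lambda(U)}$ together with the fact that $Avg^1(A)\le Avg^1(B\setminus A)$, which holds because $A\subset(-\infty,\sup H_1+\delta)$ while $B\setminus A\subset[\inf H_2-\delta,\infty)$ — and these are ordered once one checks $\sup H_1+\delta$ versus $\inf H_2-\delta$... which need \emph{not} be ordered if $\delta$ is large relative to the gap. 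So the genuinely careful version: $A$ and $B\setminus A$ are both subsets of $\mathbb{R}$, and although not interval-ordered, $A\subset S(H_1,\delta)$ has all points $\le\sup H_1+\delta$ and $B\setminus A$ consists of points of $S(H_2,\delta)$ not in $S(H_1,\delta)$, hence points at distance $\ge\delta$ from $H_1$ and within $\delta$ of $H_2$; since $H_2$ lies to the right of $H_1$, such points lie to the right of $\sup H_1$, and more precisely $\ge\sup H_1$. That gives $Avg^1(A)\le\sup H_1\le Avg^1(B\setminus A)$ when $Avg^1(A)\le\sup H_1$, which holds by internality of $Avg^1$. Symmetrically $Avg^1(B\setminus A)\le Avg^1(B)$ needs $Avg^1(A')\le Avg^1(B)$ for $A'=A\cap B$, handled the same way. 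This bookkeeping is where I expect to spend the most effort.

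For \textbf{closedness} (i.e. $\mathcal{K}$ is a closed mean in the sense of \cite{lamis}): the point is that if $H_n\to H$ appropriately then $S(H_n,\delta)$ converges and $Avg$ of it converges; but more likely ``closed'' here just means $\mathcal{K}(H)\in cl(conv(H))$ or that the domain is closed under the relevant operations — I would appeal to internality, $\inf H-\delta\le\mathcal{K}(H)\le\sup H+\delta$, together with whatever the precise definition of ``closed'' from \cite{lamis} requires, and check it is immediate from $\inf S(H,\delta)=\inf H-\delta$ and $\sup S(H,\delta)=\sup H+\delta$ plus internality of $Avg^1$.

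For \textbf{slice-continuity}: I must show $x\mapsto\mathcal{K}(H^{x-})=Avg(S(H^{x-},\delta))$ is continuous (and similarly for $x+$). The key observation is that $S(H^{x-},\delta)=S(H\cap(-\infty,x],\delta)$ depends on $x$ in a way that changes the fattened set only near its right end: as $x$ increases by a small amount, $S(H^{x-},\delta)$ changes by a set of small measure located in a bounded region, provided $H^{x-}$ is eventually nonempty. Concretely, for $x'<x$ we have $S(H^{x'-},\delta)\subset S(H^{x-},\delta)\subset S(H^{x'-},\delta)\cup(x'-\delta,x+\delta)$, the last because any new point is within $\delta$ of some $h\in(x',x]$. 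Hence $\lambda\big(S(H^{x-},\delta)\setminus S(H^{x'-},\delta)\big)\le (x-x')+2\delta$ — not small. That bound is too crude; I would instead note the new region is $S(H\cap(x',x],\delta)$ which has measure $\le (x-x')+2\delta$ still. The honest fix is that the \emph{symmetric difference} contained in a fixed bounded set, combined with $\lambda(S(H^{x'-},\delta))$ bounded below by a positive constant for $x'$ in a neighborhood, makes $Avg$ continuous via the splitting identity and the fact that $\int_{S(H^{x-},\delta)}t\,d\lambda$ and $\lambda(S(H^{x-},\delta))$ are each continuous (indeed monotone) in $x$ — continuity of these two monotone quantities is the real content, and it follows because $\lambda$ of the changing part is $\le(x-x')+2\delta$... again not $\to 0$. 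I therefore expect the intended argument is: $\lambda(S(H^{x-},\delta))$ is left-continuous and right-continuous because $S(H^{x-},\delta)=\bigcup_{h\in H^{x-}}(h-\delta,h+\delta)$ and the new contribution when passing from $x'$ to $x$ is $S(H\cap(x',x],\delta)$, whose measure tends to $0$ as $x'\to x$ since $H\cap(x',x]\to\emptyset$ in measure-of-fattening only if... Actually the correct statement: if $x\notin\overline{H^{x-}+[-\delta,\delta]}$'s boundary subtleties, $\lambda(S(H\cap(x',x],\delta))\to 0$ as $x-x'\to 0$ when $x\notin H'$... so continuity may genuinely \emph{fail} at points of $H$, and the intended claim of slice-continuity must rely on $Avg$ being taken of the \emph{open} set $S(\cdot,\delta)$, for which $\lambda$ is continuous in $x$. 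I would present: define $\phi(x)=\lambda(S(H^{x-},\delta))$ and $\psi(x)=\int_{S(H^{x-},\delta)}t\,d\lambda$; both are monotone nondecreasing, and both are continuous because a jump would require $S(H^{x-},\delta)$ to gain a set of positive measure ``instantaneously'', impossible since $S(H^{x-},\delta)=\bigcup_{x'<x}S(H^{x'-},\delta)$ (left-continuity, as the union over $x'<x$ of the fattened initial segments equals the fattening of $H\cap(-\infty,x)$, which differs from $H^{x-}$ by at most the single point $x$, contributing measure $0$) and $\bigcap_{x'>x}S(H^{x'-},\delta)=\overline{S(H^{x-},\delta)}$-ish, differing by a null set (right-continuity). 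Then $\mathcal{K}(H^{x-})=\psi(x)/\phi(x)$ is continuous wherever $\phi(x)>0$, which is exactly where $H^{x-}\neq\emptyset$. The main obstacle across the whole proposition is thus the same in disguise: carefully controlling how $S(\cdot,\delta)$ and its measure/barycenter respond to the set operations, and I would organize the write-up around the two monotone auxiliary functions $\phi,\psi$ for the slice-continuity part and around the splitting identity for $Avg^1$ for the monotonicity part.
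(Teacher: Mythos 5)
Two of the four claims are not actually proved in your plan, and the monotonicity argument contains a real (though repairable) error. In this series of papers ``is a mean'' means internality, $\inf H\le\mathcal{K}(H)\le\sup H$; you only check well-definedness, and the bound you do state, $\inf H-\delta\le\mathcal{K}(H)\le\sup H+\delta$, is strictly weaker. The paper gets internality from the ordered disjoint splitting $S(H,\delta)=S(\inf H,\delta)\cup\big(S(H,\delta)-S(\inf H,\delta)\big)$: the second piece lies in $[\inf H+\delta,+\infty)$, so monotonicity of $Avg$ together with $Avg(S(\inf H,\delta))=\inf H$ gives $\mathcal{K}(H)\ge\inf H$, and symmetrically for $\sup$. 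The same sharpening fixes your monotonicity bookkeeping: a point of $S(H_2,\delta)$ not in $S(H_1,\delta)$ is necessarily $\ge\sup H_1+\delta=\sup S(H_1,\delta)$ (otherwise it is within $\delta$ of elements of $H_1$ arbitrarily close to $\sup H_1$), so the decomposition $S(H_1\cup H_2,\delta)=S(H_1,\delta)\cup\big(S(H_2,\delta)-S(H_1,\delta)\big)$ is already ordered and plain monotonicity of $Avg$ applies; your intermediate claim that $Avg(S(H_1,\delta))\le\sup H_1$ ``by internality'' is not what internality gives (it only gives $\le\sup H_1+\delta$; the sharper bound is precisely the ``is a mean'' statement you skipped), and the inequality $\mathcal{K}(H_1\cup H_2)\le\mathcal{K}(H_2)$ should come from the mirror decomposition $\big(S(H_1,\delta)-S(H_2,\delta)\big)\cup S(H_2,\delta)$, not from comparing with $S(H_2,\delta)-S(H_1,\delta)$. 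For ``closed'' you never commit to a definition and hence prove nothing; it means $\mathcal{K}(cl(H))=\mathcal{K}(H)$, and the whole proof is the one-line identity $S(cl(H),\delta)=S(H,\delta)$ — internality is irrelevant there.

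For slice-continuity the decisive step of your plan is false: you claim $\phi(x)=\lambda(S(H^{x-},\delta))$ is left-continuous because $H^{x-}$ differs from $H\cap(-\infty,x)$ by at most one point ``contributing measure $0$''. After $\delta$-fattening, a single point contributes an interval of length up to $2\delta$, so $\phi$, and with it $\mathcal{K}(H^{x-})$, genuinely jumps at isolated points of $H$: for $H=\{0\}\cup[10,11]$ and $\delta=1$ one has $\mathcal{K}(H^{x-})=0$ for $0\le x<10$ but $\mathcal{K}(H^{10-})=5$. So your own earlier suspicion that continuity ``may genuinely fail at points of $H$'' was the correct instinct, and the $\phi,\psi$ device cannot close the gap. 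Be aware that the paper's proof leaps over exactly this point as well — it asserts $\lambda(S(H^{x_n-},\delta))\to\lambda(S(H^{x-},\delta))$ without justification, and the same example contradicts it — so the slice-continuity claim only stands under a restricted reading (for instance one-sided continuity, or sets where the new slice $H\cap(x_n,x]$ stays within distance $\delta$ of $H^{x_n-}$, e.g. no isolated points). A correct write-up must make that restriction explicit rather than hide it behind a null-set argument.
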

\begin{proof}Clearly $S(H,\delta)=S(\inf H,\delta)\cup^* (S(H,\delta)-S(\inf H,\delta))$. Using that $Avg$ is monotone it gives that $\inf H=Avg(S(\inf H,\delta))\leq Avg(S(H,\delta))$. The other inequality is similar.

Monotone: Let $\sup H_1\leq\inf H_2$. Then $S(H_1\cup H_2,\delta)=S(H_1,\delta)\cup^*(S(H_2,\delta)-S(H_1,\delta))$ and $\sup S(H_1,\delta)\leq\inf S(H_2,\delta)-S(H_1,\delta)$. Using the monotonicity of $Avg$ gives the statement.

Closed: It is the consequence of $S(H,\delta)=S(cl(H),\delta)$.

Slice-continuous: Obviously \[|\lambda(S(H^{x-},\delta))-\lambda(S(H^{y-},\delta))|\leq\lambda(S(H\cap[x,y],\delta)).\] If $x_n\to x$ then $\lambda(S(H^{x_n-}))\to\lambda(S(H^{x-}))$ and $\lambda(S(H\cap[x_n,x]))\to 0$ which gives that $Avg(S(H^{x_n-},\delta))\to Avg(S(H^{x-},\delta))$. The "+" case is similar.
\end{proof}

\begin{ex}Let $\delta>0$. Then ${\cal{K}}(H)=Avg(S(H,\delta))$ is not strict-internal, not finite-independent, not Cantor-continuous.
\end{ex}
\begin{proof}Let $H=\{0\}\cup[2\delta,3\delta]$. This shows that ${\cal{K}}$ is not strict-internal, not finite-independent.

Let $H=\{1+2\delta\}, (q_n)$ is a sequence of all rational numbers between 0 and 1, $H_n=H \cup\{q_i:i\geq n\}$. Clearly $H=\cap H_n$ and ${\cal{K}}(H_m)={\cal{K}}(H_n)$. But ${\cal{K}}(H)\ne{\cal{K}}(H_n)$ showing that ${\cal{K}}$ not Cantor-continuous.
\end{proof}

However a  stronger version of Cantor-continuity holds.

\begin{prp}Let $\delta>0,\ {\cal{K}}(L)=Avg(S(L,\delta))$. If $H_n$ is compact $(n\in\mathbb{N})$ and $H_{n+1}\subset H_n,\ H=\cap_1^{\infty}H_n$ then ${\cal{K}}(H_n)\to {\cal{K}}(H)$.
\end{prp}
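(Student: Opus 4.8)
The plan is to reduce the convergence of $Avg(S(H_n,\delta))$ to the convergence of the two ingredients of $Avg$, namely the Lebesgue measure and the first moment of the thickened sets. Concretely, I would show first that $\lambda\big(S(H_n,\delta)\big)\to\lambda\big(S(H,\delta)\big)$ and then that $\int_{S(H_n,\delta)}x\,d\lambda\to\int_{S(H,\delta)}x\,d\lambda$; since $\lambda(S(H,\delta))>0$ and all these sets live in a fixed bounded interval, the quotient converges, which is exactly ${\cal{K}}(H_n)\to{\cal{K}}(H)$.

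The key observation making the measure convergence work is that thickening respects closure and nesting: $S(H_n,\delta)=S(cl(H_n),\delta)$ (the "closed" property already recorded), and since each $H_n$ is compact and the sequence is decreasing, $S(H_{n+1},\delta)\subset S(H_n,\delta)$, so $\big(S(H_n,\delta)\big)$ is a decreasing sequence of bounded sets. By continuity of $\lambda$ from above, $\lambda\big(S(H_n,\delta)\big)\to\lambda\big(\bigcap_n S(H_n,\delta)\big)$. The crucial point, and the one I expect to be the main obstacle, is the set-theoretic identity
\[\bigcap_{n=1}^{\infty}S(H_n,\delta)=S\Big(\bigcap_{n=1}^{\infty}H_n,\delta\Big)=S(H,\delta).\]
The inclusion $\supset$ is trivial. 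For $\subset$, suppose $y\in S(H_n,\delta)$ for every $n$; then for each $n$ there is $h_n\in H_n$ with $|y-h_n|<\delta$. Because the sets are nested and compact, $(h_n)$ lies in the compact set $H_1$, so it has a subsequence converging to some point $h$; since $H_n$ is closed and $h_m\in H_n$ for all $m\ge n$, we get $h\in H_n$ for every $n$, hence $h\in H$, and $|y-h|\le\delta$. This only gives $y\in cl(S(H,\delta))$, i.e. $|y-h|\le\delta$ rather than $<\delta$, so a small amount of care is needed: the boundary sphere $\{h-\delta,h+\delta\}$ contributes measure zero, so this discrepancy does not affect $\lambda$, and one can run the whole argument modulo null sets. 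This is precisely why the statement uses $\lambda$-based $Avg$ and why compactness of the $H_n$ is essential — without it the intersection identity fails (cf. the earlier example with rationals).

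Once $\lambda\big(S(H_n,\delta)\big)\to\lambda\big(S(H,\delta)\big)$ is established (up to null sets), the moment convergence follows from the same nesting: $\int_{S(H_n,\delta)}x\,d\lambda-\int_{S(H,\delta)}x\,d\lambda=\int_{S(H_n,\delta)\setminus S(H,\delta)}x\,d\lambda$, and since everything sits in a fixed interval $[-M,M]$ this is bounded in absolute value by $M\cdot\lambda\big(S(H_n,\delta)\setminus S(H,\delta)\big)=M\big(\lambda(S(H_n,\delta))-\lambda(S(H,\delta))\big)\to 0$. Dividing, $ {\cal{K}}(H_n)=\frac{\int_{S(H_n,\delta)}x\,d\lambda}{\lambda(S(H_n,\delta))}\to\frac{\int_{S(H,\delta)}x\,d\lambda}{\lambda(S(H,\delta))}={\cal{K}}(H)$, which completes the proof. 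The only genuinely delicate step is the intersection identity above; everything after it is routine estimation with a fixed bounding interval.
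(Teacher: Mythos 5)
Your argument is correct, and its core coincides with the paper's: both proofs hinge on the sandwich $S(H,\delta)\subset\bigcap_{n=1}^{\infty} S(H_n,\delta)\subset cl(S(H,\delta))$, obtained exactly as you obtain it, by extracting a convergent subsequence from the witnesses $h_n\in H_n$ and using that the limit lies in every $H_n$. Where you differ is the finish: the paper invokes Cantor-continuity of $Avg$ for the decreasing sequence $S(H_n,\delta)$ and a cited lemma that $Avg$ is unchanged by null-set modifications, whereas you re-derive the conclusion directly from continuity of $\lambda$ from above plus the first-moment estimate $\bigl|\int_{S(H_n,\delta)}x\,d\lambda-\int_{S(H,\delta)}x\,d\lambda\bigr|\leq M\,\lambda\bigl(S(H_n,\delta)\setminus S(H,\delta)\bigr)$. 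Your route is more self-contained; the paper's is shorter given its earlier machinery.

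One step needs tightening: the claim that the discrepancy between $\bigcap_n S(H_n,\delta)$ and $S(H,\delta)$ is negligible. The per-point remark that $\{h-\delta,h+\delta\}$ is null does not by itself give a null exceptional set, since the union of these spheres over $h\in H$ is $(H-\delta)\cup(H+\delta)$, which has positive measure whenever $\lambda(H)>0$; and boundaries of general open sets in $\mathbb{R}$ can have positive measure, so something specific to $S(H,\delta)$ must be said. What suffices is $\lambda\bigl(cl(S(H,\delta))\setminus S(H,\delta)\bigr)=0$, and here it holds for a concrete reason: $S(H,\delta)$ is a bounded open set each of whose connected components has length at least $2\delta$, so it has finitely many components and $cl(S(H,\delta))\setminus S(H,\delta)$ is a finite set of endpoints (the paper reaches the same conclusion via a finite subcover $S(H,\delta)=\bigcup_{i=1}^{m}S(x_i,\delta)$ with $x_i\in H$). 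With that one line inserted, your proof is complete.
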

\begin{proof}We show that $S(H,\delta)\subset\cap_1^{\infty}S(H_n,\delta)\subset cl(S(H,\delta))$. The first inclusion is trivial. We show the second inclusion. Let $x\in \cap_1^{\infty}S(H_n,\delta)$. Then there is $x_n\in H_n$ such that $x\in S(x_n,\delta)$. From $(x_n)$ one can choose a convergent subsequence $(x_{n_k})$. Let $x_{n_k}\to x'$. Then $x'\in H_n$ for all $n$ hence $x'\in H$ and evidently $|x-x'|\leq\delta$. Therefore $x\in cl(S(x',\delta))\subset cl(S(H,\delta))$.

We show that $\lambda(cl(S(H,\delta))-S(H,\delta))=0$. By $H$ being compact there are $x_1,\dots,x_n\in H$ such that $\cup_{i=1}^nS(x_i,\delta)=S(H,\delta)$ that gives that $cl(S(H,\delta))=\cup_{i=1}^ncl(S(x_i,\delta))$ hence $cl(S(H,\delta))-S(H,\delta)$ is finite.

Obviously $S(H_{n+1},\delta)\subset S(H_n,\delta)$ which yields that $Avg(S(H_n,\delta))\to Avg(\cap_1^{\infty}S(H_n,\delta))$ by Cantor-continuity of $Avg$. But $Avg(\cap_1^{\infty}S(H_n,\delta))=Avg(S(H,\delta))$ by \cite{lambm} Lemma 2.5.
\end{proof}

\begin{df}${\cal{K}}$ is called Hausdorff-continuous if $H_n,H$ are compact sets $(n\in\mathbb{N})$, $H_n\to H$ in the Hausdorff metric then ${\cal{K}}(H_n)\to{\cal{K}}(H)$. 
\end{df}

\begin{prp}\label{plavgnhc}$Avg,\ LAvg$ are not Hausdorff-continuous.
\end{prp}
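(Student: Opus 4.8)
The plan is to produce, for each of the two means, a sequence of compact sets converging in Hausdorff metric to a limit set while the mean does not converge to the mean of the limit. The idea is that both $Avg$ and $LAvg$ are insensitive to sets of measure zero (or low dimension), so a Hausdorff-small perturbation can carry a lot of "mass" in the sense that matters.

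For $Avg$: the subtlety is that $Avg$ is only defined on $s$-sets, so I must be careful that every $H_n$ and the limit $H$ lie in a common domain. First I would take $H_n$ to be a fixed $1$-set, e.g. $H_n=[0,1]$ for all $n$, and let $H$ be obtained by adding a tiny cloud of points that pushes the Hausdorff distance down to $0$ but changes the dimension. Actually the cleaner route: let $H=[0,1]\cup\{3\}$ has $\lambda(H)>0$ so $Avg(H)=Avg([0,1])=\tfrac12$; but take $H_n=[0,1]\cup\{3\}\cup F_n$ where $F_n$ is a finite $\tfrac1n$-net of $[0,3]$. Then $H_n\to[0,3]$ in Hausdorff metric, each $H_n$ is a $1$-set with $Avg(H_n)=Avg([0,1]\cup\{3\})=\tfrac12$ (finite sets are $\lambda$-null), yet $Avg([0,3])=\tfrac32\ne\tfrac12$. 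So with $H_n$ fixed-structure and the limit being $[0,3]$, Hausdorff-continuity fails: $\lim Avg(H_n)=\tfrac12\ne\tfrac32=Avg(\lim H_n)$. I would double-check the Hausdorff convergence: $d([0,1]\cup\{3\}\cup F_n,[0,3])\le\tfrac1n\to0$ since $F_n$ fills in the gap $(1,3)$, and $[0,1]\cup\{3\}\subset[0,3]$.

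For $LAvg$: recall $LAvg(H)=\lim_{n}Avg(S(H,\tfrac1n))$, which for a set with closure $cl(H)$ depends only on $cl(H)$, and for reasonable $H$ it agrees with $Avg(cl(H))$ when $cl(H)$ is nice. The trick here is that $LAvg$ is continuous in a strong sense (indeed the earlier proposition shows $H\mapsto Avg(S(H,\delta))$ is Hausdorff-continuous on compacts for fixed $\delta$), so the failure must come from the limit over $\delta$. I would exhibit compact sets $H_n\to H$ with $cl(H_n)$ "spread out" but $H$ a single point, or vice versa. Concretely: let $H_n=\{0\}\cup\{1\}$ — no, that's constant. Better: let $H$ be a measure-zero Cantor-type set and $H_n$ finite nets of it; then $LAvg(H_n)=\A$(net)-type behavior while $LAvg(H)$ is governed by $Avg(S(H,\tfrac1n))$. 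Simplest: take $H_n=\{0,1\}$ with... I would instead reuse the $Avg$ example: since $LAvg(H)=Avg(cl(H))$ whenever $cl(H)$ has positive measure equal to that forced by the $\delta\to0$ limit, and $LAvg(\{0,1,\ldots\})$-type finite limits differ. Let $H_n=[0,1]\cup F_n$ with $F_n$ a $\tfrac1n$-net of $[0,3]$ and $H=[0,3]$: then $LAvg(H_n)=LAvg([0,1])=\tfrac12$ for each $n$ (the net is finite, and $S(\cdot,\delta)$ of a finite set has measure $\to0$... wait, no — $S(F_n,\delta)$ has positive measure for fixed $\delta$). This is the main obstacle: for $LAvg$ the $\delta$-neighborhood inflates finite sets, so I cannot ignore $F_n$ as easily. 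The fix is to make $F_n$ sparse enough relative to the eventual $\delta$: since the limit over $\delta\to0$ is taken after fixing $H_n$, I should instead keep $H_n$'s structure fixed and vary the limit set. So: let $H_n=[0,\tfrac1n]$. Then $H_n\to\{0\}$ in Hausdorff metric, $LAvg(H_n)=\tfrac1{2n}\to0=LAvg(\{0\})$ — that converges, no good. I would instead take $H_n=[0,1]\cup[2,2+\tfrac1n]$, so $H_n\to[0,1]\cup\{2\}$, with $LAvg(H_n)=Avg([0,1]\cup[2,2+\tfrac1n])\to Avg([0,1])=\tfrac12$ but $LAvg([0,1]\cup\{2\})=\lim_\delta Avg(S([0,1]\cup\{2\},\delta))=\lim_\delta\frac{\int_{(-\delta,1+\delta)}x+\int_{(2-\delta,2+\delta)}x}{(1+2\delta)+2\delta}=\frac{1/2}{1}=\tfrac12$ as well — again equal. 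The genuine obstacle, which I expect to be the crux, is finding the right sequence where the $\delta$-limit and the Hausdorff-limit genuinely fail to commute; I anticipate the author uses something like $H_n$ = finite $\tfrac1n$-nets of $[0,1]$ converging to $[0,1]$, where $LAvg(H_n)=\A(H_n)\to\tfrac12=LAvg([0,1])$ — still equal — so instead an asymmetric net, e.g. nets of $[0,1]$ weighted toward one end, giving $\A(H_n)\to c\ne\tfrac12$ while $H_n\to[0,1]$ in Hausdorff metric since any dense-enough net is Hausdorff-close regardless of weighting. That asymmetric-net construction is the key step and the main thing to get right; once the sequence is chosen, verifying Hausdorff convergence and computing the two limits is routine.
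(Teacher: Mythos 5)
Your $Avg$ half is correct and is essentially the paper's own construction: the paper takes $H_n=[1,2]\cup\{\tfrac{k}{n}:0\leq k\leq n\}\to[0,2]$ (an interval of positive measure plus a finite net filling out a larger interval), getting $Avg(H_n)=\tfrac32$ for all $n$ while $Avg([0,2])=1$; your $H_n=[0,1]\cup\{3\}\cup F_n\to[0,3]$ is the same idea with different numbers, and the domain and Hausdorff-convergence checks you make are right.

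The $LAvg$ half, however, contains a genuine gap, and it stems from a confusion about the order of limits. In $LAvg(H_n)=\lim_{\delta\to0+}Avg(S(H_n,\delta))$ the set $H_n$ is \emph{fixed} and $\delta\to0$, so for $H_n=[0,1]\cup F_n$ with $F_n$ finite one has $\lambda(S(F_n,\delta))\leq 2\delta\,|F_n|\to0$, hence $Avg(S(H_n,\delta))\to Avg([0,1])=\tfrac12$, i.e. $LAvg(H_n)=\tfrac12$, whereas $LAvg([0,3])=\lim_{\delta\to0+}Avg((-\delta,3+\delta))=\tfrac32$. Your objection ``$S(F_n,\delta)$ has positive measure for fixed $\delta$'' is true but irrelevant, and the example you abandoned already disproves Hausdorff-continuity of $LAvg$; this is in substance what the paper does, except that it keeps one sequence for both statements and disposes of $LAvg$ by citing that $Avg=LAvg$ on closed sets. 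Your final fallback (asymmetric finite nets of $[0,1]$) would also work, since for a finite $F$ and small $\delta$ the balls $S(x,\delta)$, $x\in F$, are disjoint and of equal length, so $LAvg(F)=\A(F)$; for instance taking $H_n$ to be the $\tfrac1{n^2}$-net points of $[0,\tfrac12]$ together with the $\tfrac1n$-net points of $[\tfrac12,1]$ gives $H_n\to[0,1]$ in the Hausdorff metric while $LAvg(H_n)=\A(H_n)\to\tfrac14\neq\tfrac12=LAvg([0,1])$. But as submitted you explicitly leave that construction and its two computations---which you yourself call the key step---undone, so the $LAvg$ part is a plan rather than a proof; either carry out the asymmetric-net computation or, more simply, reinstate your discarded example with the order-of-limits point above.
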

\begin{proof}Let $H=[0,2],\ H_n=[1,2]\cup\{\frac{k}{n}:0\leq k\leq n,k\in\mathbb{N}\}$. Clearly $H_n,H$ are compact, $H_n\to H$ in the Hausdorff metric but $Avg(H)=1,\ Avg(H_n)=1.5$ for all $n\in\mathbb{N}$. By \cite{lambm} Theorem 5.8 $Avg=LAvg$ for closed sets. 
\end{proof}

\begin{lem}\label{lssp}Let $H\subset\mathbb{R},\ \epsilon, \delta>0$. Then $S(S(H,\epsilon),\delta)=S(H,\epsilon+\delta)$.
\end{lem}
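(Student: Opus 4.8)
The plan is to prove the set-identity $S(S(H,\epsilon),\delta)=S(H,\epsilon+\delta)$ by double inclusion, using only the definition $S(A,r)=\bigcup_{a\in A}S(a,r)$ together with the triangle inequality on $\mathbb{R}$. First I would unwind the left-hand side: a point $z$ lies in $S(S(H,\epsilon),\delta)$ iff there is $y\in S(H,\epsilon)$ with $|z-y|<\delta$, and $y\in S(H,\epsilon)$ means there is $x\in H$ with $|y-x|<\epsilon$. Then by the triangle inequality $|z-x|\le|z-y|+|y-x|<\delta+\epsilon$, so $z\in S(H,\epsilon+\delta)$; this gives $S(S(H,\epsilon),\delta)\subset S(H,\epsilon+\delta)$.

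For the reverse inclusion, take $z\in S(H,\epsilon+\delta)$, so there is $x\in H$ with $|z-x|<\epsilon+\delta$. The task is to exhibit an intermediate point $y$ with $y\in S(H,\epsilon)$ and $|z-y|<\delta$. The natural choice is the point on the segment from $x$ to $z$ at the appropriate ratio: set $t=|z-x|$ (so $0\le t<\epsilon+\delta$) and define $y=x+\tfrac{\epsilon'}{t}(z-x)$ for a suitable $\epsilon'<\epsilon$, or more simply $y = x + \min\{1, \epsilon/t\}\cdot(z-x)$ when $t>0$ and $y=x$ when $t=0$. One checks $|y-x|\le\epsilon$ — in fact one can arrange strict inequality by shrinking slightly — hence $y\in S(H,\epsilon)$, and $|z-y| = |z-x|-|y-x| < (\epsilon+\delta)-\epsilon\cdot\text{(appropriate factor)}$, which can be made $<\delta$. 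Care is needed to keep both inequalities strict simultaneously; a clean way is: if $t<\delta$ take $y=x$ (then $|z-y|=t<\delta$ and $|y-x|=0<\epsilon$); if $t\ge\delta$, push $y$ toward $z$ so that $|z-y|$ is just under $\delta$, say $|z-y| = \delta\cdot\tfrac{t}{t+\text{something}}$, forcing $|x-y|=t-|z-y| < t-\delta+\delta\cdot(\ldots) < \epsilon$ since $t-\delta<\epsilon$.

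The only mildly delicate point — the one I expect to be the main obstacle — is handling the strict inequalities at the boundary, since $S(\cdot,r)$ is an open enlargement: we must land strictly inside both balls at once. This is purely a bookkeeping matter on the real line and is resolved by the case split above (or by noting that $t<\epsilon+\delta$ leaves room of total slack $\epsilon+\delta-t>0$ to distribute between the two legs). Everything else is a direct application of the triangle inequality and its reverse form. I would present the two inclusions in two short paragraphs, with the case-split for the reverse direction, and conclude equality.

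\begin{proof}
Let $z\in S(S(H,\epsilon),\delta)$. Then there is $y\in S(H,\epsilon)$ with $|z-y|<\delta$, and there is $x\in H$ with $|y-x|<\epsilon$. Hence $|z-x|\le|z-y|+|y-x|<\delta+\epsilon$, so $z\in S(x,\epsilon+\delta)\subset S(H,\epsilon+\delta)$.

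Conversely, let $z\in S(H,\epsilon+\delta)$, so there is $x\in H$ with $t:=|z-x|<\epsilon+\delta$. If $t<\delta$, set $y=x$; then $y\in S(H,\epsilon)$ (as $0<\epsilon$) and $|z-y|=t<\delta$, so $z\in S(S(H,\epsilon),\delta)$. If $t\ge\delta$, then $t-\delta<\epsilon$; pick $\rho$ with $\max\{0,t-\epsilon\}<\rho<\delta$ and set $y=x+\tfrac{t-\rho}{t}(z-x)$. Then $|y-x|=t-\rho<\epsilon$, so $y\in S(H,\epsilon)$, and $|z-y|=\rho<\delta$, so again $z\in S(S(H,\epsilon),\delta)$.

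Combining the two inclusions yields $S(S(H,\epsilon),\delta)=S(H,\epsilon+\delta)$.
\end{proof}
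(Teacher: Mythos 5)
Your proof is correct and follows essentially the same route as the paper: the forward inclusion is the triangle inequality, and the reverse inclusion amounts to showing $S(x,\delta)\cap S(h,\epsilon)\ne\emptyset$ when $|x-h|<\epsilon+\delta$, which the paper simply asserts and you verify by explicitly constructing the intermediate point (your case split and choice of $\rho$ are sound).
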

\begin{proof}Let $x\in S(S(H,\epsilon),\delta)$. Then there is $y\in\mathbb{R},h\in H$ such that $x\in S(y,\delta), y\in S(h,\epsilon)$ which gives that $|h-x|<\epsilon+\delta$.

Let $x\in S(H,\epsilon+\delta)$. Then there is $h\in H$ such that $|h-x|<\epsilon+\delta$ which yields that there is $y\in S(x,\delta)\cap S(h,\epsilon)\ne\emptyset$.
\end{proof}

\begin{lem}\label{llshde}Let $\delta>0,\ H\subset\mathbb{R}$ be compact. Then \[\lim\limits_{\epsilon\to 0+0}\lambda(S(H,\delta+\epsilon)-S(H,\delta-\epsilon))=0.\]
\end{lem}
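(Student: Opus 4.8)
I want to show that for compact $H\subset\mathbb{R}$ and fixed $\delta>0$, the "annular" region $S(H,\delta+\epsilon)-S(H,\delta-\epsilon)$ shrinks to measure zero as $\epsilon\to 0+0$. The first observation is that the family of sets $A_\epsilon=S(H,\delta+\epsilon)-S(H,\delta-\epsilon)$ is nested: since $\epsilon'<\epsilon$ implies $S(H,\delta-\epsilon)\subset S(H,\delta-\epsilon')$ and $S(H,\delta+\epsilon')\subset S(H,\delta+\epsilon)$, we get $A_{\epsilon'}\subset A_\epsilon$. So by continuity of $\lambda$ from above (all these sets are bounded since $H$ is compact, hence of finite measure), it suffices to show $\lambda\big(\bigcap_{\epsilon>0}A_\epsilon\big)=0$, where the intersection is over $\epsilon\in(0,\delta)$.

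**Identifying the limiting set.** The plan is to show $\bigcap_{\epsilon>0}A_\epsilon$ is contained in the topological boundary of $S(H,\delta)$, i.e. in $mar\big(S(H,\delta)\big)$. Indeed, if $x\in\bigcap_{\epsilon>0}A_\epsilon$, then for every $\epsilon$ there is $h_\epsilon\in H$ with $|x-h_\epsilon|<\delta+\epsilon$, and simultaneously $|x-h|\geq\delta-\epsilon$ for all $h\in H$. The latter forces $dist(x,H)\geq\delta$, and combining with the former (letting $\epsilon\to 0$, using compactness of $H$ to extract a limit point of $(h_\epsilon)$) gives $dist(x,H)\leq\delta$, so $dist(x,H)=\delta$. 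A point at distance exactly $\delta$ from the compact set $H$ lies in $cl(S(H,\delta))$ but not in the open set $S(H,\delta)$ (which requires strict inequality $dist<\delta$), hence it lies on the boundary $mar(S(H,\delta))$.

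**Finishing via the structure of $S(H,\delta)$.** Here I would reuse the argument already appearing in the proof of the strong Cantor-continuity proposition above: since $H$ is compact, $S(H,\delta)=\bigcup_{i=1}^n S(x_i,\delta)$ for finitely many $x_i\in H$, so $S(H,\delta)$ is a finite union of open intervals and its boundary $mar(S(H,\delta))$ is a finite set, hence $\lambda\big(mar(S(H,\delta))\big)=0$. Combined with the inclusion $\bigcap_{\epsilon>0}A_\epsilon\subset mar(S(H,\delta))$ from the previous step, this gives $\lambda\big(\bigcap_{\epsilon>0}A_\epsilon\big)=0$, and then the nestedness plus continuity of measure from above yields $\lim_{\epsilon\to 0+0}\lambda(A_\epsilon)=0$, which is the claim.

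**Main obstacle.** The routine part is the monotonicity and the continuity-of-measure step; the only genuinely careful point is the identification $\bigcap_{\epsilon>0}A_\epsilon\subset mar(S(H,\delta))$, where one must be attentive to the strict-versus-nonstrict inequalities in the definition of $S(\cdot,\cdot)$ and use compactness of $H$ to convert "distance $<\delta+\epsilon$ for all $\epsilon$" into "distance $\leq\delta$". An alternative, perhaps cleaner, route avoiding boundaries entirely: write $S(H,\delta+\epsilon)-S(H,\delta-\epsilon)\subset S(H,\delta+\epsilon)-S(H,\delta-\epsilon)$ and bound its measure directly by $\sum_{i=1}^n\lambda\big(S(x_i,\delta+\epsilon)-S(x_i,\delta-\epsilon)\big)\leq n\cdot 4\epsilon\to 0$, again using the finite subcover $S(H,\delta)=\bigcup_{i=1}^n S(x_i,\delta)$ — though one must check that the same finite collection of centers also covers the slightly larger $S(H,\delta-\epsilon)$-complement appropriately, which it does for $\epsilon$ small since $S(H,\delta+\epsilon)\subset\bigcup_{i=1}^n S(x_i,\delta+\epsilon)$ trivially. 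I would present this second route as the main line, as it is shortest.
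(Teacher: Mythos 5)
Your declared main line (the second route) is essentially the paper's own argument: the paper likewise picks finitely many centers with $S(H,\delta)=\bigcup_{i=1}^{n}S(x_i,\delta)$ and bounds $\lambda(S(H,\delta+\epsilon)-S(H,\delta))$ and $\lambda(S(H,\delta)-S(H,\delta-\epsilon))$ by $2\epsilon n$ each, which is your $4\epsilon n$ estimate split in two. The one caveat: the step you dismiss as ``trivial'', namely $S(H,\delta+\epsilon)\subset\bigcup_{i=1}^{n}S(x_i,\delta+\epsilon)$, is exactly the step the paper waves through with ``obviously'', and it is not trivial. If the $x_i$ were merely a finite subcover of $H$ handed to you by compactness, both this inclusion and the equality $\bigcup_i S(x_i,\delta)=S(H,\delta)$ can fail (take $H=[0,1]$, $\delta=\tfrac14$, centers $0.1,0.3,0.5,0.7,0.9$: the union is $(-0.15,1.15)$, not $(-0.25,1.25)$). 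The inclusion does hold once the union of the $\delta$-balls is \emph{exactly} $S(H,\delta)$, but seeing this takes a structural remark: each component of $S(H,\delta)$ is $(c-\delta,d+\delta)$ with $c,d\in H$, so an exact cover must use balls centered at $c$ and at $d$ with consecutive centers less than $2\delta$ apart; enlarging every radius by $\epsilon$ then still covers $(c-\delta-\epsilon,d+\delta+\epsilon)$, which contains the $(\delta+\epsilon)$-neighbourhood of $H\cap C$. If you present route 2, write that check out rather than asserting it.

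Your first route is a genuinely different and correct argument: the sets $A_\epsilon$ are nested, $\lambda$ is continuous from above, and $\bigcap_{\epsilon>0}A_\epsilon$ lies in the set of points whose distance from $H$ is exactly $\delta$, hence in $mar(S(H,\delta))$, which has measure zero. Its advantage is that it bypasses the delicate cover statement entirely: you do not need any exact ball cover of $S(H,\delta)$, only that $S(H,\delta)$ is a bounded open set each of whose components contains some $S(h,\delta)$, hence has finitely many components and a finite boundary. Since that is easier to justify cleanly than the cover equality you (and the paper) invoke, route 1 is arguably the safer choice for the main line, even though route 2 is shorter on paper.
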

\begin{proof}Let $0<\epsilon<\delta$. 

By compactness there are points $x_1,\dots,x_n$ such that $S(H,\delta)=\cup_{i=1}^nS(x_i,\delta)$. Obviously $S(H,\delta+\epsilon)=\cup_{i=1}^nS(x_i,\delta+\epsilon)$ hence $\lambda(S(H,\delta+\epsilon)-S(H,\delta))\leq 2\epsilon n$.

Clearly $\cup_{i=1}^nS(x_i,\delta-\epsilon)\subset S(H,\delta-\epsilon)\subset S(H,\delta)$ which gives that $\lambda(S(H,\delta)-S(H,\delta-\epsilon))\leq\lambda(S(H,\delta)-\cup_{i=1}^nS(x_i,\delta-\epsilon))\leq 2\epsilon n$.
\end{proof}

\begin{thm}\label{tavgsdhc}Let $\delta>0,\ {\cal{K}}(L)=Avg(S(L,\delta))$. Then ${\cal{K}}$ is Hausdorff-continuous.
\end{thm}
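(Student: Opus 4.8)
The plan is to show that if $H_n\to H$ in the Hausdorff metric (all compact), then $Avg(S(H_n,\delta))\to Avg(S(H,\delta))$. The natural route is to control both the numerator $\int_{S(H_n,\delta)}x\,d\lambda$ and the denominator $\lambda(S(H_n,\delta))$ and show each converges to the corresponding quantity for $H$. Since $S(H_n,\delta)$ is a bounded open set contained in a fixed compact interval (the sets $H_n$ being eventually uniformly bounded), it suffices to prove $\lambda\big(S(H_n,\delta)\triangle S(H,\delta)\big)\to 0$; this immediately yields convergence of both integrals and of the measures, and hence of the ratio (the denominator $\lambda(S(H,\delta))\geq 2\delta>0$ is bounded away from $0$).

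First I would set $\eta_n=d(H_n,H)\to 0$. The definition of Hausdorff distance gives the two-sided inclusion $S(H_n,\delta)\subset S(H,\delta+\eta_n)$ and $S(H,\delta)\subset S(H_n,\delta+\eta_n)$, and by symmetry also $H\subset S(H_n,\eta_n)$, so $S(H,\delta)\subset S(S(H_n,\eta_n),\delta)=S(H_n,\delta+\eta_n)$ using Lemma~\ref{lssp}; similarly $S(H_n,\delta)\supset S(H,\delta-\eta_n)$ once $\eta_n<\delta$. Combining these, for $\eta_n<\delta$ we get
\[
S(H,\delta-\eta_n)\subset S(H_n,\delta)\subset S(H,\delta+\eta_n),
\]
hence
\[
\lambda\big(S(H_n,\delta)\triangle S(H,\delta)\big)\leq \lambda\big(S(H,\delta+\eta_n)-S(H,\delta-\eta_n)\big).
\]
Now Lemma~\ref{llshde} applied to the compact set $H$ shows the right-hand side tends to $0$ as $\eta_n\to 0$.

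From $\lambda\big(S(H_n,\delta)\triangle S(H,\delta)\big)\to 0$ and the uniform boundedness of all the sets involved, I would conclude $\lambda(S(H_n,\delta))\to\lambda(S(H,\delta))$ and $\int_{S(H_n,\delta)}x\,d\lambda\to\int_{S(H,\delta)}x\,d\lambda$ (the integrand $x$ is bounded on the common enclosing interval), and therefore $Avg(S(H_n,\delta))\to Avg(S(H,\delta))$, i.e. ${\cal{K}}(H_n)\to{\cal{K}}(H)$. The only mild subtlety — the main thing to get right — is the bookkeeping on uniform boundedness (so that $x$ is bounded on the domains of integration and the denominators stay bounded above), together with the harmless restriction to $n$ large enough that $\eta_n<\delta$; both Lemma~\ref{lssp} and Lemma~\ref{llshde} do the real work, so the argument is essentially a packaging of those two lemmas.
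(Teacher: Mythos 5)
Your proof is correct and takes essentially the same route as the paper: the same sandwich $S(H,\delta-\epsilon)\subset S(H_n,\delta)\subset S(H,\delta+\epsilon)$ obtained from Hausdorff convergence plus Lemma~\ref{lssp}, followed by Lemma~\ref{llshde} to make the symmetric difference of $S(H_n,\delta)$ and $S(H,\delta)$ small in measure, the only difference being that you spell out the final elementary step (convergence of numerator and denominator under a uniform bound, with the denominator bounded below by $2\delta$) where the paper instead cites \cite{lambm} Lemma 2.15. One cosmetic point: since $S(\cdot,\cdot)$ is open, the inclusion $H\subset S(H_n,\eta_n)$ with $\eta_n=d(H_n,H)$ can fail when the distance is attained, so you should work with any $\epsilon_n>\eta_n$, $\epsilon_n\to 0$ (the paper does this by fixing $\epsilon$ and taking $n$ large), which changes nothing in the argument.
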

\begin{proof}Let $0<\epsilon<\delta$. Then there is $N\in\mathbb{N}$ such that $n>N$ implies that $H_n\subset S(H,\epsilon),H\subset S(H_n,\epsilon)$. By \ref{lssp} we get that $S(H,\delta-\epsilon)\subset S(H_n,\delta)\subset S(H,\delta+\epsilon)$. Obviously $S(H,\delta-\epsilon)\subset S(H,\delta)\subset S(H,\delta+\epsilon)$ holds as well. By \ref{llshde} $\lim\limits_{\epsilon\to 0+0}\lambda((S(H,\delta)-S(H_n,\delta))\cup(S(H_n,\delta)-S(H,\delta)))=0$. By \cite{lambm} Lemma 2.15 we get the statement.
\end{proof}

\begin{prp}Let $n\in\mathbb{N},\ H\subset\mathbb{R}$, $a=\inf H,b=\sup H,\ H_{n,i}=H\cap\big[a+\frac{i}{n}(b-a),a+\frac{i+1}{n}(b-a)\big)\ (i\in\mathbb{Z})$. Then ${\cal{K}}(H)=\A(\{a+\frac{i}{n}(b-a):H_{n,i}\ne\emptyset\})$ is a mean. 
\end{prp}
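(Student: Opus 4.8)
The plan is to verify the three defining properties of a mean for the map $\mathcal{K}(H)=\mathcal{A}(\{a+\frac{i}{n}(b-a):H_{n,i}\ne\emptyset\})$, namely internality (which already entails well-definedness once we know the argument set is nonempty and bounded), together with whatever else the paper takes ``mean'' to require. First I would fix $H$ with $a=\inf H$, $b=\sup H$, and treat the degenerate case $a=b$ separately: then $H$ is a singleton, only $H_{n,0}$ is nonempty, and $\mathcal{K}(H)=a$, so internality is immediate. Assume now $a<b$.

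Next I would observe that the index set $I=\{i\in\mathbb{Z}:H_{n,i}\ne\emptyset\}$ is finite and nonempty: it is contained in $\{0,1,\dots,n-1\}$ because every $h\in H$ satisfies $a\le h\le b$, hence $0\le\frac{h-a}{b-a}\le 1$, so $h$ lands in some $H_{n,i}$ with $0\le i\le n-1$ (the point $b$ itself, if attained, is not in any half-open block $[a+\frac{i}{n}(b-a),a+\frac{i+1}{n}(b-a))$, but $b$ need not be attained, and in any case $I\ne\emptyset$ since $a\in H$ forces $0\in I$). So $\mathcal{K}(H)=\mathcal{A}$ of a finite nonempty set of reals of the form $a+\frac{i}{n}(b-a)$ with $0\le i\le n-1$; consequently $a=a+\frac{0}{n}(b-a)\le\mathcal{K}(H)\le a+\frac{n-1}{n}(b-a)<b$. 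This gives $\inf H\le\mathcal{K}(H)\le\sup H$, i.e. internality.

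Then I would record the remaining ``mean'' axioms. For the constant/singleton value: if $H=\{c\}$ the computation above already gives $\mathcal{K}(H)=c$. One should also note $\mathcal{K}$ is well-defined in the sense that it does not depend on any choices — it does not, since $I$ is determined by $H$ and $n$ alone. If the paper's notion of ``mean'' additionally bundles in, say, symmetry under reflection, that follows because replacing $H$ by $T_s(H)$ for $s=\frac{a+b}{2}$ sends $H_{n,i}$ to (essentially) $H_{n,n-1-i}$ up to the boundary convention, so the argument set is reflected about $s$ and hence $\mathcal{K}(T_s H)=T_s(\mathcal{K}(H))$; I would mention this only if it is part of the definition of mean being invoked here.

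I do not expect a serious obstacle: the only mild subtlety is the half-open convention on the blocks $H_{n,i}$ and the status of the endpoint $b$, which is why I isolate the $a=b$ case and why I use the strict inequality $\mathcal{K}(H)<b$ rather than worrying about whether $b$ itself occupies a block. The argument is essentially the same bookkeeping as for $\mathcal{M}^{eds}$ in \cite{lamis} Section 6, specialized to a single $n$, so the write-up should be just a few lines: finiteness and nonemptiness of $I$, then the two-sided bound $a\le a+\frac{i}{n}(b-a)\le a+\frac{n-1}{n}(b-a)<b$ for each $i\in I$, then averaging.
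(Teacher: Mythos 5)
Your overall route is the same as the paper's: show that every point entering the average lies between $\inf H$ and $\sup H$, and conclude internality (the paper's proof is literally the one line ``$H_{n,i}\ne\emptyset$ implies $\inf H\leq a+\frac{i}{n}(b-a)\leq\sup H$, hence ${\cal{K}}$ is a mean''). However, two of your intermediate claims are false because $i$ ranges over all of $\mathbb{Z}$ and the blocks are half-open. First, if $\sup H$ is attained then $b\in H_{n,n}=H\cap[b,\,b+\frac{1}{n}(b-a))$, so the index set is \emph{not} contained in $\{0,\dots,n-1\}$: the point $b$ itself can enter the average, and your derived bound ${\cal{K}}(H)\leq a+\frac{n-1}{n}(b-a)<b$ fails — e.g. $n=1$, $H=[0,1]$ gives ${\cal{K}}(H)=\A(\{0,1\})=\frac12$, not $\leq 0$. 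The correct (and sufficient) statement is the paper's: for any $i$ with $H_{n,i}\ne\emptyset$, pick $h\in H_{n,i}$; then $a+\frac{i}{n}(b-a)\leq h\leq b$, and from $a\leq h<a+\frac{i+1}{n}(b-a)$ one gets $i\geq 0$, so $a\leq a+\frac{i}{n}(b-a)\leq b$, whence $a\leq{\cal{K}}(H)\leq b$. Second, your degenerate case is handled incorrectly: if $a=b$ every block equals $[a,a)=\emptyset$, so the argument set is empty and ${\cal{K}}$ is simply undefined on singletons; the claim that $H_{n,0}\ne\emptyset$ and ${\cal{K}}(H)=a$ is wrong (this case is harmless, since such $H$ is not in the intended domain, but it should not be asserted). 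A smaller slip: $a=\inf H$ need not belong to $H$, so ``$a\in H$ forces $0\in I$'' is not a valid justification of $I\ne\emptyset$; nonemptiness holds simply because every $h\in H$ lies in some block. With these corrections your argument collapses to the paper's one-liner; the extra machinery (finiteness of $I$, reflection symmetry) is not needed, since ``mean'' here only requires internality.
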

\begin{proof}Clearly $H_{n,i}\ne\emptyset$ implies that $\inf H\leq a+\frac{i}{n}(b-a)\leq\sup H$ hence ${\cal{K}}$ is a mean.
\end{proof}

\begin{ex}Let $n\in\mathbb{N},\ H\subset\mathbb{R}$, $a=\inf H,b=\sup H,\ H_{n,i}=H\cap\big[a+\frac{i}{n}(b-a),a+\frac{i+1}{n}(b-a)\big)\ (i\in\mathbb{Z})$. Then ${\cal{K}}(H)=\A(\{a+\frac{i}{n}(b-a):H_{n,i}\ne\emptyset\})$ is not closed, not slice-continuous, not finite-independent, not strict-internal.
\end{ex}
\begin{proof}Let $H=\{0,3\}\cup(1,2)$ and $n=3$. Then ${\cal{K}}(H)=\A(\{0,1,3\})$ while ${\cal{K}}(cl(H))=\A(\{0,1,2,3\})$ showing that ${\cal{K}}$ is not closed.

Let $L=\{0,1,2,3\},\ n=3$. Let $x_i\to 3,x_i<3$. Then $\lim\limits_{i\to\infty}{\cal{K}}(L^{x_i-})=\A(\{0,1,2\})$ and ${\cal{K}}(L)=\A(\{0,1,2,3\})$ therefore ${\cal{K}}$ is not slice-continuous.

Obviously ${\cal{K}}(L)\ne{\cal{K}}(L-\{0\})$ hence ${\cal{K}}$ is not finite-independent.

Taking any $n$, ${\cal{K}}(\{\frac{1}{i}:i\in\mathbb{N}\})\ne 0$ proving that ${\cal{K}}$ is not strict internal.
\end{proof}

\begin{prp}Let $n\in\mathbb{N}$. Then $Dom({\cal{K}})=\{H\subset\mathbb{R}:cl(H-H')=H,\ H-S(H',\frac{1}{n})\ne\emptyset\},\ {\cal{K}}(H)=\A(H-S(H',\frac{1}{n}))$ is a mean.
\end{prp}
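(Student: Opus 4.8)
The plan is to check the two things that being a mean requires here: that $\mathcal{K}(H)$ is well defined as a single real number, and that $\inf H\leq\mathcal{K}(H)\leq\sup H$. Recall that $\A$ is defined on finite lists of reals (and on sequences only through the limit of partial averages), so the first — and essentially the only substantive — step is to show that for $H$ in the domain the set $K:=H-S(H',\frac1n)$ is finite; once that is known, $\A(K)$ is an ordinary arithmetic mean of finitely many numbers, of which there is at least one because the domain condition says $K\neq\emptyset$.

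For finiteness I would argue by contradiction, using the standing convention that the sets under consideration are bounded. Suppose $K$ is infinite. Then $K$ is a bounded infinite set, so it has an accumulation point $p$, and $K\subset H$ gives $p\in K'\subset H'$. But $p\in H'$ forces $S(p,\frac1n)\subset S(H',\frac1n)$, hence $S(p,\frac1n)\cap K=\emptyset$; that is, every point of $K$ is at distance at least $\frac1n$ from $p$, contradicting $p\in K'$. Therefore $K$ is finite, and $\mathcal{K}(H)=\A(K)$ is a well-defined real number.

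Internality is then immediate: since $K\subset H$, every $y\in K$ satisfies $\inf H\leq y\leq\sup H$, and the arithmetic mean of finitely many numbers lying in $[\inf H,\sup H]$ again lies in $[\inf H,\sup H]$, so $\inf H\leq\mathcal{K}(H)\leq\sup H$. Hence $\mathcal{K}$ is a mean. The only mildly delicate point is the finiteness argument (together with its implicit use of boundedness of $H$); everything else follows from $K\subset H$ and the elementary properties of the finite arithmetic mean, in the same spirit as the analogous statements established earlier in this section for $Avg(S(H,\delta))$ and for $\A(\{a+\frac{i}{n}(b-a):H_{n,i}\neq\emptyset\})$.
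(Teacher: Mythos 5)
Your proof is correct and matches the paper's approach: the paper's entire argument is the internality observation that every $h\in H-S(H',\frac{1}{n})$ lies in $H$, hence between $\inf H$ and $\sup H$. Your additional Bolzano--Weierstrass argument showing $H-S(H',\frac{1}{n})$ is finite (so that $\A$ is an ordinary finite arithmetic mean) is a valid verification of a point the paper leaves implicit.
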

\begin{proof}Obviously $h\in H-S(H',\frac{1}{n})$ implies that $\inf H\leq h\leq\sup H$.
\end{proof}

\begin{ex}Let $n\in\mathbb{N}$. Then $Dom({\cal{K}})=\{H\subset\mathbb{R}:cl(H-H')=H,\ H-S(H',\frac{1}{n})\ne\emptyset\},\ {\cal{K}}(H)=\A(H-S(H',\frac{1}{n}))$ is not monotone.
\end{ex}
\begin{proof}Let $H_1=\{0,2(n+3),2(n+3)-\frac{1}{2n},\frac{1}{k}:k\geq n\},\ H_2=\{2(n+3),2(n+3)+\frac{1}{k}:k\geq n\}$. Clearly $cl(H_1-H_1')=H_1,cl(H_2-H_2')=H_2,\sup H_1\leq\inf H_2$. Then $\A(H_1-S(H_1',\frac{1}{n}))=\A\{\frac{1}{n},2(n+3)-\frac{1}{2n},2(n+3)\}=\frac{4(n+3)}{3}+\frac{1}{6n}$ and $\A(H_1\cup H_2-S(H_1'\cup H_2',\frac{1}{n}))=\A\{\frac{1}{n},2(n+3)+\frac{1}{n}\}=n+3+\frac{1}{n}$. Evidently $\frac{4(n+3)}{3}+\frac{1}{6n}>n+3+\frac{1}{n}$ showing that ${\cal{K}}$ is not monotone.
\end{proof}

\subsection{General limits}
\begin{df}We say that a sequence of generalized means $({\cal{K}}_i)$ converge pointwise to a mean ${\cal{K}}$ if for all $H\in \cap_1^{\infty}Dom({\cal{K}}_i)\cap Dom({\cal{K}})$ ${\cal{K}}_i(H)\to{\cal{K}}(H)$. In this case we use the usual notation ${\cal{K}}_i\to{\cal{K}}$. 
\end{df}

\begin{df}We say that a sequence of generalized means $({\cal{K}}_i)$ converge uniformly to a mean ${\cal{K}}$ if $\forall\epsilon>0\ \exists N\in\mathbb{N}$ such that $n>N$ implies that for all $H\in \cap_1^{\infty}Dom({\cal{K}}_i)\cap Dom({\cal{K}})$ $|{\cal{K}}_n(H)-{\cal{K}}(H)|<\epsilon$. 
\end{df}

\begin{prp}Let ${\cal{K}}_i\to{\cal{K}},\ H\subset\mathbb{R}$, and $H\in \cap_1^{\infty}Dom({\cal{K}}_i)\cap Dom({\cal{K}})$. Furthermore $\forall h\in H\ H-\{h\}\in Dom({\cal{K}}_i)\cap Dom({\cal{K}})$ and $|{\cal{K}}_i(H)-{\cal{K}}_i(H-\{h\})|\to 0$. Then ${\cal{K}}$ is finite-independent.
\end{prp}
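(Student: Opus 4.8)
The plan is to fix an arbitrary $H \in \bigcap_1^\infty Dom({\cal{K}}_i) \cap Dom({\cal{K}})$ and a point $h \in H$, and to show that ${\cal{K}}(H - \{h\}) = {\cal{K}}(H)$; once this is done, removing finitely many points follows by an obvious induction (each intermediate set must be checked to lie in the relevant domains, but the hypothesis $\forall h\in H\ H-\{h\}\in Dom({\cal{K}}_i)\cap Dom({\cal{K}})$ should be read as being available at each stage, or one simply restates the single-point case as what needs proving).

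The core of the argument is a triangle-inequality estimate. For each $i$ write
\[
|{\cal{K}}(H) - {\cal{K}}(H-\{h\})| \leq |{\cal{K}}(H) - {\cal{K}}_i(H)| + |{\cal{K}}_i(H) - {\cal{K}}_i(H-\{h\})| + |{\cal{K}}_i(H-\{h\}) - {\cal{K}}(H-\{h\})|.
\]
The first term tends to $0$ because ${\cal{K}}_i \to {\cal{K}}$ pointwise and $H$ lies in the common domain. The middle term tends to $0$ by the explicit hypothesis $|{\cal{K}}_i(H)-{\cal{K}}_i(H-\{h\})|\to 0$. The third term tends to $0$ because ${\cal{K}}_i \to {\cal{K}}$ pointwise and $H-\{h\}$ also lies in the common domain (here we use the hypothesis $H-\{h\}\in Dom({\cal{K}}_i)\cap Dom({\cal{K}})$). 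Since the left-hand side does not depend on $i$ and the right-hand side can be made arbitrarily small, we conclude ${\cal{K}}(H) = {\cal{K}}(H-\{h\})$.

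There is essentially no obstacle here beyond bookkeeping: the only thing to be careful about is the domain membership of the intermediate sets $H - \{h_1,\dots,h_k\}$ when iterating, and the analogous statement for adding finitely many points (which the definition of finite-independence also requires) — but that direction is handled symmetrically, since if $H$ and $H\cup F$ are both in the common domain for finite $F$, applying the already-proved removal statement to $H\cup F$ with the points of $F$ gives ${\cal{K}}(H\cup F) = {\cal{K}}(H)$, provided the relevant hypothesis $|{\cal{K}}_i(H\cup F)-{\cal{K}}_i(H\cup F-\{h\})|\to 0$ holds for $h\in F$; if the paper's notion of finite-independence only concerns removal, this is not even needed. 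I would therefore present the proof as the three-term triangle estimate above followed by a one-line induction remark.
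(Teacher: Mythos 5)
Your proof is correct and follows essentially the same route as the paper: the paper also deduces ${\cal{K}}(H-\{h\})={\cal{K}}(H)$ from ${\cal{K}}_i(H)\to{\cal{K}}(H)$, ${\cal{K}}_i(H-\{h\})\to{\cal{K}}(H-\{h\})$ and $|{\cal{K}}_i(H)-{\cal{K}}_i(H-\{h\})|\to 0$ (your triangle inequality just makes that limit argument explicit), and then finishes with the same induction remark for finitely many points.
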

\begin{proof}Clearly ${\cal{K}}_i(H)\to{\cal{K}}(H),\ {\cal{K}}_i(H-\{h\})\to{\cal{K}}(H-\{h\})$ hence ${\cal{K}}(H-\{h\})={\cal{K}}(H)$. From that we get the statement by induction.
\end{proof}

\begin{thm}If ${\cal{K}}_i\to{\cal{K}}$ and for all $i\in\mathbb{N}\ {\cal{K}}_i$ is internal, strong-internal, finite-independent, monotone, disjoint-monotone, closed, accumulated, convex, translation-invariant, reflection-invariant, homogeneous respectively then so is ${\cal{K}}$.
\end{thm}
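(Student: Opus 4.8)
The plan is to verify each of the listed properties one at a time, exploiting the fact that pointwise convergence commutes with all the order relations and set operations that appear in the definitions of these properties, because each such property is an inequality (or equality) among finitely many values ${\cal K}(\cdot)$ and weak inequalities are preserved under limits.

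First I would fix notation: for each property, take the sets witnessing the hypothesis of that property (e.g. for internality a single $H\in Dom({\cal K})$, for monotonicity disjoint $H_1,H_2$ with $\sup H_1\le\inf H_2$, for convexity the relevant $H$ and the point being tested, etc.), and I note that by definition of pointwise convergence we only need to deal with sets that lie in $\cap_i Dom({\cal K}_i)\cap Dom({\cal K})$, so all the quantities ${\cal K}_i(H_j)$ and ${\cal K}(H_j)$ are defined. Then for each $i$ the property holds for ${\cal K}_i$, giving an inequality such as $\inf H\le{\cal K}_i(H)\le\sup H$ (internality), $\varliminf H\le{\cal K}_i(H)\le\varlimsup H$ (strong-internality), ${\cal K}_i(H_1)\le{\cal K}_i(H_1\cup H_2)\le{\cal K}_i(H_2)$ (monotone/disjoint-monotone), ${\cal K}_i(H-\{h\})={\cal K}_i(H)$ and ${\cal K}_i(H-K)={\cal K}_i(H)={\cal K}_i(H\cup K)$ for finite $K$ (finite-independence), ${\cal K}_i(H^{'{\cal K}_i})={\cal K}_i(H)$ (accumulated — here one must be careful, see below), ${\cal K}_i(H+x)={\cal K}_i(H)+x$ (translation-invariance), ${\cal K}_i(T_s(H))=T_s({\cal K}_i(H))$ i.e. ${\cal K}_i(2s-H)=2s-{\cal K}_i(H)$ (reflection-invariance), ${\cal K}_i(\alpha H)=\alpha{\cal K}_i(H)$ (homogeneity). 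Letting $i\to\infty$ and using that the bounds $\inf H,\sup H,\varliminf H,\varlimsup H$ and the right-hand sides $x,\,2s-{\cal K}(H),\,\alpha{\cal K}(H)$ are either constants or themselves limits of the corresponding ${\cal K}_i$ expressions, each inequality or equality passes to the limit, yielding the same property for ${\cal K}$.

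For the equalities (finite-independence, translation-, reflection-, homogeneity-invariance, convex meaning affinity if that is what is meant, closed) the argument is cleanest: ${\cal K}_i(A)={\cal K}_i(B)$ for all $i$ forces ${\cal K}(A)={\cal K}(B)$; for translation-invariance write ${\cal K}(H+x)=\lim{\cal K}_i(H+x)=\lim({\cal K}_i(H)+x)={\cal K}(H)+x$, and similarly for reflection and homogeneity since $2s-t$ and $\alpha t$ are continuous in $t$. For ``closed'' I would interpret ${\cal K}_i$ closed as ${\cal K}_i(cl(H))={\cal K}_i(H)$ (as used earlier in the paper), so again the equality survives the limit. For the order properties (internal, strong-internal, monotone, disjoint-monotone) the weak inequalities survive the limit as noted. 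For ``convex'' I would use whichever formulation the paper has fixed — presumably that ${\cal K}(\alpha H+\beta K)=\alpha{\cal K}(H)+\beta{\cal K}(K)$ or an interval-type statement — and in either case it is an affine identity in finitely many ${\cal K}$-values, hence stable under pointwise limits.

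The main obstacle is ``accumulated'' (self-accumulated), because the set $H^{'{\cal K}}$ depends on ${\cal K}$ itself, so the hypothesis is ${\cal K}_i(H^{'{\cal K}_i})={\cal K}_i(H)$ whereas the conclusion wants ${\cal K}(H^{'{\cal K}})={\cal K}(H)$, and in general $H^{'{\cal K}_i}\ne H^{'{\cal K}}$; moreover there is no reason $H^{'{\cal K}_i}$ converges to $H^{'{\cal K}}$ in any useful sense, nor that $H^{'{\cal K}}\in\cap_i Dom({\cal K}_i)$. I expect one must either add a hypothesis (e.g. that $H^{'{\cal K}_i}$ eventually stabilises, or that all these accumulation-point sets coincide, or that the convergence is strong enough that a point spoils the limit mean iff it eventually spoils ${\cal K}_i$) or simply restrict the claim to the cases where $H^{'{\cal K}}$ is accessible; I would handle this by first showing $x\in H^{'{\cal K}}$ implies, for each $\epsilon$, that some $L\subset S(x,\epsilon)$ has ${\cal K}(H-L)\ne{\cal K}(H)$, hence ${\cal K}_i(H-L)\not\to{\cal K}_i(H)$ is false — this is the delicate direction and the place where a genuinely extra argument or assumption is needed, so I would flag it and treat the remaining, routine, properties by the uniform ``pass to the limit in the defining (in)equality'' recipe above.
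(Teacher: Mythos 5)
Your blanket recipe --- restate the defining (in)equality for each ${\cal K}_i$ and let $i\to\infty$ --- is exactly what the paper does for internality, strong-internality, finite-independence, monotonicity, closedness, translation-, reflection-invariance and homogeneity, and for those items your argument is fine. But it has a genuine gap for the two properties whose \emph{hypothesis} is a condition on the mean itself rather than on the sets: disjoint-monotonicity and convexity. Disjoint-monotonicity assumes $H_1\cap H_2=\emptyset$ and ${\cal K}(H_1)\le{\cal K}(H_2)$; this does \emph{not} give ${\cal K}_i(H_1)\le{\cal K}_i(H_2)$, so you cannot invoke the property of ${\cal K}_i$ the way you do when you write ${\cal K}_i(H_1)\le{\cal K}_i(H_1\cup H_2)\le{\cal K}_i(H_2)$ and lump it with monotonicity (whose hypothesis $\sup H_1\le\inf H_2$ is mean-free). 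The paper needs a case split: if ${\cal K}(H_1)<{\cal K}(H_2)$ the strict order transfers to ${\cal K}_n$ for all large $n$ and one passes to the limit; if ${\cal K}(H_1)={\cal K}(H_2)$ one chooses an infinite set of indices along which one of the two orders holds, applies disjoint-monotonicity of ${\cal K}_n$ there, and squeezes ${\cal K}(H_1\cup H_2)$ in the limit. Likewise, ``convex'' in this series of papers is the interval property (${\cal K}(H)\in I$ and $L\subset I$ imply ${\cal K}(H\cup L)\in I$), not an affine identity in finitely many ${\cal K}$-values; since ${\cal K}_i(H)$ need not lie in $I$, the paper applies convexity of ${\cal K}_i$ to the closed convex hull $I_i$ of $I$ and ${\cal K}_i(H)$, obtains ${\cal K}_i(H\cup L)\in I_i$, and uses ${\cal K}_i(H)\to{\cal K}(H)\in I$ to force the limit into $I$. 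Your ``stable under pointwise limits'' remark does not supply either of these extra steps.

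Separately, the obstacle you flag for ``accumulated'' rests on a misreading: in this theorem ``accumulated'' means ${\cal K}(H')={\cal K}(H)$ with $H'$ the ordinary topological derived set (a fixed set, independent of the mean); the mean-dependent set $H^{'{\cal K}}$ belongs to the distinct notion ``self-accumulated'', which is not among the listed properties. So that case is routine --- an equality of ${\cal K}_i$-values on the two fixed sets $H$ and $H'$ passes to the limit --- and no additional hypothesis of the kind you propose is needed.
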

\begin{proof}Internal: If $\forall i\ \inf H\leq {\cal{K}}_i(H)\leq\sup H$ then $\inf H\leq {\cal{K}}(H)\leq\sup H$.

Strong-internal: Replace $\inf,\sup$ with $\varliminf,\varlimsup$ in the proof of internal.

Finite-independent: If $\forall i\ {\cal{K}}_i(H)={\cal{K}}_i(H-V)$ then ${\cal{K}}(H)={\cal{K}}(H-V)$ where $V$ is finite. The same for union.

Monotone: If $\sup H_1\leq\inf H_2$ then $\forall i\ {\cal{K}}_i(H_1)\leq{\cal{K}}_i(H_1\cup H_2)\leq{\cal{K}}_i(H_2)$ which gives the same for ${\cal{K}}$.

Disjoint-monotone: Let $H_1\cap H_2=\emptyset$. If ${\cal{K}}(H_1)<{\cal{K}}(H_2)$ then there is $N\in\mathbb{N}$ such that $n>N$ implies that ${\cal{K}}_n(H_1)<{\cal{K}}_n(H_2)$. From that we get the statement similarly as for monotonicity. If ${\cal{K}}(H_1)={\cal{K}}(H_2)$ then there is infinitely many $n$ such that either ${\cal{K}}_n(H_1)\geq{\cal{K}}_n(H_2)$ or ${\cal{K}}_n(H_1)\leq{\cal{K}}_n(H_2)$ holds. In the first case ${\cal{K}}_n(H_2)\leq{\cal{K}}_n(H_1\cup H_2)\leq{\cal{K}}_n(H_1)$ holds from which the assertion follows. The second case is similar.

Closed, accumulated:  If $\forall i\ {\cal{K}}_i(cl(H))={\cal{K}}_i(H),\ {\cal{K}}_i(H')={\cal{K}}_i(H)$ respectively then it is inherited to ${\cal{K}}$.

Convex: Let $I$ be a closed interval and ${\cal{K}}(H)\in I,\ L\subset I,H\cup L\in Dom({\cal{K}})\cap_1^{\infty}Dom({\cal{K}}_i)$. Then ${\cal{K}}_i(H)\in I_i$ where $I_i$ is the closed convex hull of ${\cal{K}}_i(H)$ and $I$. This implies that ${\cal{K}}_i(H\cup L)\in I_i$. But ${\cal{K}}_i(H)\to{\cal{K}}(H)$ therefore $\cap_1^{\infty}I_i=I$ which yields that ${\cal{K}}(H\cup L)\in I$. 

Translation-invariant, reflection-invariant, homogeneous: If $\forall i\ {\cal{K}}_i(H+x)={\cal{K}}_i(H)+x,\ T_s({\cal{K}}_i(H))={\cal{K}}_i(T_s(H)),\ \alpha{\cal{K}}_i(H)={\cal{K}}_i(\alpha H)$ respectively then it is inherited to ${\cal{K}}$ because $f(y)=y+x,\ g(y)=T_s(y),\ h(y)=\alpha y$ are continuous.
\end{proof}

\begin{lem}\label{lucinh}Let $H,H_i\in \cap_1^{\infty}Dom({\cal{K}}_j)\cap Dom({\cal{K}})$ and $\forall n\in\mathbb{N}\ \lim\limits_{i\to\infty}{\cal{K}}_n(H_i)={\cal{K}}_n(H)$. If ${\cal{K}}_i\to{\cal{K}}$ uniformly then $\lim\limits_{i\to\infty}{\cal{K}}(H_i)={\cal{K}}(H)$.
\end{lem}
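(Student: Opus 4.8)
The plan is to run the standard $\epsilon/3$ argument that one uses for interchanging a uniform limit with a sequential limit, adapted to this setting of means. Fix $\epsilon>0$. By uniform convergence ${\cal{K}}_j\to{\cal{K}}$ there is an index $N$ such that for all $H'\in\cap_1^{\infty}Dom({\cal{K}}_j)\cap Dom({\cal{K}})$ we have $|{\cal{K}}_N(H')-{\cal{K}}(H')|<\frac{\epsilon}{3}$. In particular, since $H$ and every $H_i$ lie in this common domain, this applies simultaneously to $H$ and to all the $H_i$.

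Next I would exploit the hypothesis $\lim_{i\to\infty}{\cal{K}}_N(H_i)={\cal{K}}_N(H)$ at this now-fixed level $N$: there is $I_0\in\mathbb{N}$ such that $i>I_0$ implies $|{\cal{K}}_N(H_i)-{\cal{K}}_N(H)|<\frac{\epsilon}{3}$. Then for $i>I_0$ the triangle inequality gives
\[|{\cal{K}}(H_i)-{\cal{K}}(H)|\leq|{\cal{K}}(H_i)-{\cal{K}}_N(H_i)|+|{\cal{K}}_N(H_i)-{\cal{K}}_N(H)|+|{\cal{K}}_N(H)-{\cal{K}}(H)|<\frac{\epsilon}{3}+\frac{\epsilon}{3}+\frac{\epsilon}{3}=\epsilon,\]
which is exactly $\lim_{i\to\infty}{\cal{K}}(H_i)={\cal{K}}(H)$.

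There is essentially no obstacle here; the only point that needs a moment's care is checking that every set to which we apply the uniform estimate really is in $\cap_1^{\infty}Dom({\cal{K}}_j)\cap Dom({\cal{K}})$ — but that is precisely the membership assumed in the statement for $H$ and for each $H_i$, so the uniform bound is legitimately available for all of them at once. The proof is three lines of inequalities once the quantifiers are ordered correctly (pick $N$ from uniform convergence first, then $I_0$ from the $i$-limit at level $N$).
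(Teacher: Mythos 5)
Your proposal is correct and is essentially identical to the paper's own proof: both fix $N$ from uniform convergence so that $|{\cal{K}}_N(L)-{\cal{K}}(L)|<\epsilon/3$ on the common domain, then choose the index threshold from $\lim_i{\cal{K}}_N(H_i)={\cal{K}}_N(H)$, and conclude by the same three-term triangle inequality. Nothing further is needed.
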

\begin{proof}Let $\epsilon>0$. Then there is $N\in\mathbb{N}$ such that $n\geq N$ implies that $|{\cal{K}}_n(L)-{\cal{K}}(L)|<\frac{\epsilon}{3}$ for all $L\in \cap_1^{\infty}Dom({\cal{K}}_j)\cap Dom({\cal{K}})$. We know that $\lim\limits_{i\to\infty}{\cal{K}}_N(H_i)={\cal{K}}_N(H)$ hence there is $I\in\mathbb{N}$ such that $i\geq I$ implies that $|{\cal{K}}_N(H_i)-{\cal{K}}_N(H)|<\frac{\epsilon}{3}$. Now we get that
\[|{\cal{K}}(H_i)-{\cal{K}}(H)|\leq|{\cal{K}}(H_i)-{\cal{K}}_N(H_i)|+|{\cal{K}}_N(H_i)-{\cal{K}}_N(H)|+|{\cal{K}}_N(H)-{\cal{K}}(H)|<\epsilon.\qedhere\]
\end{proof}

\begin{thm}\label{tucipr}If ${\cal{K}}_n\to{\cal{K}}$ uniformly and $\forall n\ {\cal{K}}_n$ is Cantor-continuous, slice-continuous, Hausdorff-continuous respectively then so is ${\cal{K}}$.
\end{thm}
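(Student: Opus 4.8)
The plan is to treat each of the three continuity properties separately, and in each case to exploit Lemma~\ref{lucinh} together with the fact that the hypothesis ``$\forall n\ {\cal{K}}_n$ is (Cantor/slice/Hausdorff)-continuous'' provides exactly the convergence statement $\lim_{i\to\infty}{\cal{K}}_n(H_i)={\cal{K}}_n(H)$ that Lemma~\ref{lucinh} requires. So the skeleton is always the same: fix the relevant kind of approximating sequence $(H_i)$ with limit object $H$, check that every set involved lies in $\cap_1^{\infty}Dom({\cal{K}}_j)\cap Dom({\cal{K}})$ (this is part of the definitions of the respective continuity notions, or can be assumed as in the ambient framework), observe that $n$-th term continuity gives ${\cal{K}}_n(H_i)\to{\cal{K}}_n(H)$ for each fixed $n$, and then invoke Lemma~\ref{lucinh} to conclude ${\cal{K}}(H_i)\to{\cal{K}}(H)$.

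Concretely: for Cantor-continuity, take compact (or just $Dom$-admissible) sets with $H_{i+1}\subset H_i$ and $H=\cap_1^\infty H_i$; since each ${\cal{K}}_n$ is Cantor-continuous we have ${\cal{K}}_n(H_i)\xrightarrow{i}{\cal{K}}_n(H)$, so Lemma~\ref{lucinh} (with the role of ``$H_i$'' played by the descending sets and ``$H$'' by their intersection) yields ${\cal{K}}(H_i)\to{\cal{K}}(H)$, which is Cantor-continuity of ${\cal{K}}$. For slice-continuity, fix $H$ and $x\in\mathbb{R}$ and consider $x_i\to x$; apply the same reasoning to $H_i=H^{x_i-}$ and $H^{x-}$ (and separately for the ``$+$'' slices): each ${\cal{K}}_n$ being slice-continuous gives ${\cal{K}}_n(H^{x_i-})\to{\cal{K}}_n(H^{x-})$, and Lemma~\ref{lucinh} transfers this to ${\cal{K}}$, so $x\mapsto{\cal{K}}(H^{x-})$ is continuous. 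For Hausdorff-continuity, take compact $H_i\to H$ in the Hausdorff metric; again ${\cal{K}}_n(H_i)\to{\cal{K}}_n(H)$ for each $n$ by Hausdorff-continuity of ${\cal{K}}_n$, and Lemma~\ref{lucinh} gives ${\cal{K}}(H_i)\to{\cal{K}}(H)$.

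The only real subtlety — and the point I would be careful about — is the domain bookkeeping: Lemma~\ref{lucinh} insists that $H$ and all $H_i$ belong to $\cap_1^{\infty}Dom({\cal{K}}_j)\cap Dom({\cal{K}})$, and for slice-continuity one also needs the slices $H^{x_i-}$ to lie in this common domain, which is implicit in what it means for the $n$-th mean to be slice-continuous on a set in the common domain; I would state this as a standing assumption (consistent with how the continuity notions are used elsewhere in the paper) rather than belabour it. There is no genuine obstacle here: once the domain issue is acknowledged, each of the three cases is a one-line application of Lemma~\ref{lucinh}. I would therefore write the proof as three short paragraphs, each of the form ``let $(H_i)$ be [the appropriate approximating sequence]; by [$n$-th term continuity], $\lim_{i\to\infty}{\cal{K}}_n(H_i)={\cal{K}}_n(H)$ for every $n$; by Lemma~\ref{lucinh}, $\lim_{i\to\infty}{\cal{K}}(H_i)={\cal{K}}(H)$.''
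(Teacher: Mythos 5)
Your proposal is correct and coincides with the paper's own proof: the paper likewise treats the three properties by specifying the appropriate approximating sequence $(H_i)$ and limit set $H$ (descending intersection, slices $H^{x_i-}$, Hausdorff-convergent compacts) and then invoking Lemma~\ref{lucinh} in each case. Your extra remark on the domain bookkeeping is a reasonable clarification that the paper leaves implicit, but it does not change the argument.
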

\begin{proof}Apply \ref{lucinh} in the different cases as follows.

Cantor-continuous: Let $H_{i+1}\subset H_i,\ H=\cap_1^{\infty}H_i$.

Slice-continuous: Let $x_n\to x,\ H_i=H^{x_i-}, H=H^{x-}$. 

Hausdorff-continuous: Let $H_i,H$ be compact and $H_i\to H$ in the Hausdorff-metric.
\end{proof}

\begin{cor}The convergence of $Avg(S(H,\frac{1}{n}))$ to $LAvg(H)$ is not uniform.
\end{cor}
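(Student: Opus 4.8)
The plan is to argue by contradiction using Theorem~\ref{tucipr}. Each underlying mean ${\cal{K}}_n(H)=Avg(S(H,\frac{1}{n}))$ is Hausdorff-continuous by Theorem~\ref{tavgsdhc}, so if the convergence $Avg(S(H,\frac{1}{n}))\to LAvg(H)$ were uniform, then Theorem~\ref{tucipr} would force $LAvg$ to be Hausdorff-continuous as well. But Proposition~\ref{plavgnhc} asserts that $LAvg$ is not Hausdorff-continuous. This contradiction yields the corollary.

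First I would record that the sequence ${\cal{K}}_n$ with ${\cal{K}}_n(L)=Avg(S(L,\frac{1}{n}))$ converges pointwise to $LAvg$ by the very definition $LAvg(H)=\lim_{n\to\infty}Avg(S(H,\frac{1}{n}))$. Then I would invoke Theorem~\ref{tavgsdhc} (with $\delta=\frac{1}{n}$) to see that every ${\cal{K}}_n$ is Hausdorff-continuous. The second step is to suppose for contradiction that ${\cal{K}}_n\to LAvg$ uniformly; then the Hausdorff-continuity clause of Theorem~\ref{tucipr} applies and shows $LAvg$ is Hausdorff-continuous. The third step is simply to cite Proposition~\ref{plavgnhc}, which says $LAvg$ is \emph{not} Hausdorff-continuous, completing the contradiction.

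One small point worth checking is that the domain bookkeeping in the definition of uniform convergence is harmless here: the compact sets appearing in the counterexample of Proposition~\ref{plavgnhc} lie in $\bigcap_1^\infty Dom({\cal{K}}_n)\cap Dom(LAvg)$ (indeed $S(H,\delta)$ has finite positive measure for any nonempty bounded $H$, and $LAvg$ is defined on all subsets of $\mathbb{R}$), so Lemma~\ref{lucinh} and Theorem~\ref{tucipr} genuinely apply to them. I do not expect any real obstacle; the only thing to be careful about is to state clearly that Hausdorff-continuity of the $n$-th underlying mean is exactly the content of Theorem~\ref{tavgsdhc} specialized to $\delta=\tfrac1n$, so the hypothesis of Theorem~\ref{tucipr} is met. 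The argument is then a three-line contradiction.
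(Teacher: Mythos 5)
Your argument is correct and is exactly the paper's proof: cite Theorem~\ref{tavgsdhc} for Hausdorff-continuity of each $Avg(S(\cdot,\frac1n))$, Proposition~\ref{plavgnhc} for the failure of Hausdorff-continuity of $LAvg$, and conclude via Theorem~\ref{tucipr} that the convergence cannot be uniform. The extra remark on domain bookkeeping is a harmless (and reasonable) addition.
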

\begin{proof}By \ref{tavgsdhc} and \ref{plavgnhc} $Avg(S(H,\frac{1}{n}))$ is Hausdorff-continuous while $LAvg(H)$ is not. Hence by \ref{tucipr} the convergence cannot be uniform.
\end{proof}

\section{Some open problems}

\begin{prb}Find a mean that is slice-continuous but fails to be bi-slice-continuous.
\end{prb}

\begin{prb}Let ${\cal{K}}=Avg^1, H\in Dom(Avg^1)$. Show that $f(x)=\overline{d}{\cal{K}}_H(x)$ is continuous at almost every point of $H^{'{\cal{K}}}$.
\end{prb}

\begin{prb}Let ${\cal{K}}=Avg^1, H\in Dom(Avg^1)$. Show that for almost every point of $H^{'{\cal{K}}}$ $d{\cal{K}}_H(x)$ exists.
\end{prb}


{\footnotesize

\smallskip
\noindent
Dennis G\'abor College, Hungary 1119 Budapest Fej\'er Lip\'ot u. 70.

\noindent 
E-mail: losonczi@gdf.hu, alosonczi1@gmail.com\\
}

\end{document}